\begin{document}

\newtheorem{theorem}{Theorem}[section]
\newtheorem{prop}[theorem]{Proposition}
\newtheorem{lemma}[theorem]{Lemma}
\newtheorem{cor}[theorem]{Corollary}
\newtheorem{conj}[theorem]{Conjecture}

\theoremstyle{definition}
\newtheorem{definition}[theorem]{Definition}
\newtheorem{rmk}[theorem]{Remark}
\newtheorem{eg}[theorem]{Example}
\newtheorem{qn}[theorem]{Question}
\newtheorem{defn}[theorem]{Definition}

\numberwithin{equation}{section}

\newcommand\Z{{\mathbb{Z}}}
\newcommand\natls{{\mathbb{N}}}
\newcommand{\C}{{\mathbb C}}
\newcommand{\Rr}{{\mathbb R}}
\newcommand\R{{\mathbb R}}

\newcommand{\bbar}{\overline}
\newcommand{\til}{\widetilde}
\newcommand{\Ra}{\longrightarrow}

\title[Quasiprojective three-manifold groups and complexification]{Quasiprojective
three-manifold groups and complexification of three-manifolds}

\author[I. Biswas]{Indranil Biswas}

\address{School of Mathematics, Tata Institute of Fundamental
Research, Homi Bhabha Road, Bombay 400005, India}

\email{indranil@math.tifr.res.in}

\author[M. Mj]{Mahan Mj}

\address{RKM Vivekananda University, Belur Math, WB 711202, 
India}

\email{mahan.mj@gmail.com, mahan@rkmvu.ac.in}

\subjclass[2000]{57M50, 32Q15, 57M05 (Primary); 14F35, 32J15 (Secondary)}

\keywords{3-manifold, quasiprojective group, good complexification,
affine variety}

\date{\today}

\begin{abstract} We characterize the quasiprojective groups that appear as
fundamental groups of compact $3$-manifolds (with or without boundary). We also
characterize all closed $3$-manifolds that admit good complexifications. These answer
questions of Friedl--Suciu, \cite{fs}, and Totaro \cite{tot}.
\end{abstract}

\maketitle

\tableofcontents

\section{Introduction} 

A group is called quasiprojective (respectively, K\"ahler) if it is the
fundamental group of a smooth complex quasiprojective variety (respectively,
compact K\"ahler manifold). 
K\"ahler and quasiprojective $3$-manifold groups have attracted much
attention of late \cite{ds, kotschick, bms, dps, fs, kot13}. In this paper
we characterize quasiprojective $3$-manifold groups. 

We shall follow the convention that our 3-manifolds have {\bf no
spherical boundary components.} Capping such boundary components off by 3-balls does
not change the fundamental group, which is really what interests us here.  

\begin{theorem}[{See Theorem \ref{qpcomb}}]
\label{qpcomb0}
Let $N$ be a compact $3$-manifold (with or without boundary).
If $\pi_1(N)$ is a quasiprojective group, then $N$ is either Seifert-fibered
or $\pi_1(N)$ is one of the following type
\begin{itemize}
\item virtually free, or

\item virtually a surface group.
\end{itemize}
\end{theorem}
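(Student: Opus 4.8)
The plan is to play the geometric decomposition of $N$ against structural restrictions peculiar to quasiprojective groups. Write $G=\pi_1(N)$ and assume $G$ is quasiprojective. First I would dispose of the reducible and elementary cases. By the prime decomposition $G$ is a free product $G_1\ast\cdots\ast G_k\ast F$ with each $G_i$ the infinite, freely indecomposable group of an irreducible prime factor and $F$ free (contributed by the $S^2\times S^1$ summands). If this free product is genuinely nontrivial, so that $G$ has infinitely many ends, I would invoke the theory of (filtered) ends of quasiprojective groups --- Gromov's cut theory and its quasiprojective refinement by Napier--Ramachandran --- to conclude that $G$ is virtually free, one of the allowed outcomes. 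Likewise if $G$ is finite or two-ended then $N$ is spherical or $S^2\times S^1$ and $G$ is again virtually free. This reduces the problem to $N$ irreducible with $G$ infinite and one-ended, hence $N$ aspherical.

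Second, on an irreducible aspherical $N$ I would run geometrization and examine the JSJ decomposition along essential tori. The target is the dichotomy: either $G$ has a nontrivial infinite cyclic normal subgroup, in which case the recognition theorem for Seifert spaces (Waldhausen, Casson--Jungreis, Gabai) forces $N$ to be Seifert-fibered, or $N$ is an $I$-bundle/handlebody-type piece whose group is virtually a surface group or free. The entire content is therefore the exclusion of everything in between: a (possibly cusped) atoroidal hyperbolic piece, a $\mathrm{Sol}$ manifold, and graph manifolds carrying a nontrivial JSJ. The $\mathrm{Sol}$ case is quick, since its group is solvable and polycyclic but not virtually nilpotent, contradicting the theorem of Arapura--Nori that a solvable quasiprojective group is virtually nilpotent.

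Third, and this is the main obstacle, I would rule out the genuinely hyperbolic possibilities. A nontrivial JSJ, or the Haken splitting of a hyperbolic piece, presents $G$ as a nontrivial amalgam or HNN extension over a small edge group --- a $\Z^2$ along a JSJ torus, or a surface/cyclic group. The key input is that such a splitting of a quasiprojective group is, up to finite index, governed by an orbifold pencil $X\to C$ onto a quasiprojective curve (Arapura's description of the cohomology jump loci, together with the splitting results of Delzant and Napier--Ramachandran): the edge group must then be commensurable with the data of a fiber of such a fibration. Matching this against the $3$-manifold picture is the crux. When the edge group is a JSJ torus $\Z^2$, the fibration is forced to restrict to the Seifert fibration on the adjacent pieces and to assemble into a global infinite cyclic normal subgroup of $G$, driving $N$ back into the Seifert case; and an atoroidal hyperbolic piece supports no such pencil at all, yielding a contradiction. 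Making this incompatibility precise --- effectively showing that an aspherical quasiprojective $3$-manifold group that is neither two-dimensional nor Seifert cannot exist --- is where the finer invariants enter, namely the characteristic varieties and Alexander modules of Friedl--Suciu and the Bieri--Neumann--Strebel invariant, which detect fiberedness and the absence of positive-dimensional pencils.

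Finally I would reassemble the cases: sphere and free summands and connected sums give virtually free groups; irreducible aspherical pieces with a normal cyclic subgroup give Seifert-fibered $N$; $I$-bundles over surfaces and their admissible gluings give virtually surface groups; and all hyperbolic, $\mathrm{Sol}$, and genuinely mixed manifolds are excluded. This is exactly the trichotomy asserted.
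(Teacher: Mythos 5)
Your target trichotomy is the right one, and your Sol exclusion via Arapura--Nori is exactly what the paper does; but there are two genuine gaps. The first, and the heart of the theorem, is the exclusion of a closed hyperbolic prime piece. You propose to feed a JSJ splitting or ``the Haken splitting of a hyperbolic piece'' into the Delzant/Napier--Ramachandran/characteristic-variety machinery, but a closed hyperbolic $3$-manifold need not be Haken and may be a rational homology sphere: there is then no splitting over a surface or cyclic group, no positive-dimensional cohomology jump locus, and no BNS data to exploit. Your sentence ``an atoroidal hyperbolic piece supports no such pencil at all, yielding a contradiction'' is precisely the statement that needs proof and is not supplied. The paper circumvents this by invoking \emph{largeness} of $3$-manifold groups (Agol--Wise for closed hyperbolic manifolds, Cooper--Long--Reid and Lackenby for bounded irreducible ones, Long--Niblo for non-geometric graph manifolds): a finite-index subgroup of $\pi_1(N)$ surjects onto $F_3$, Bauer's logarithmic Castelnuovo--de Franchis theorem then produces a pencil $f\colon X \to C$ with $b_1(C)\geq 3$, Nori's lemma yields a short exact sequence $1 \to H \to \pi_1(X) \to \pi_1(C) \to 1$ with $H$ finitely generated, and Hempel's classification of normal subgroup structures in $3$-manifold groups (Proposition 2.6 of the paper) forces either the Seifert alternative ($H$ infinite cyclic), or $H$ finite (virtually free), or $H$ a surface group with virtually cyclic quotient (virtually a surface group).

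The second gap is your opening reduction of the reducible case: ``infinitely many ends implies virtually free by the quasiprojective refinement of Gromov's cut theory'' is not a theorem in that form. Quasiprojective groups with infinitely many ends abound (all nonabelian free groups, for instance), and what the ends/pencil theory actually delivers is a fibration over a noncompact curve --- not the finiteness of its kernel. To get from there to ``virtually free'' you again need the $3$-manifold input (coherence and Hempel's dichotomy, or Nori's finitely generated kernel combined with the Scott--Wall fact that these free products have no nontrivial finite normal subgroup); indeed the paper's own Theorem 5.10 on free products requires extra hypotheses for exactly this reason. In the paper the reducible case is not treated separately at all: a nontrivial free product other than $(\Z/2\Z)\ast(\Z/2\Z)$ is large, so it is absorbed into the same largeness-plus-Bauer argument.
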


Finer results leading to a complete characterization are given in
Section \ref{realzn} and Section \ref{conc} (see Theorem \ref{finalcomb}).
We omit stating these here as they are slightly more complicated to do so.

This characterization of quasiprojective $3$-manifold groups
answers Questions 8.3 and Conjecture 8.4 of \cite{fs}; see Corollary
\ref{fs8.3} and Corollary \ref{fs8.4}.

The following theorem provides an answer to Question 8.1 of \cite{fs}
under mild hypotheses.

\begin{theorem}[{See Theorem \ref{qpfs}}]
Suppose $A$ and $B$ are groups, such that the free product $G\,=\, A*B$ is a 
quasiprojective group. In addition suppose that both $A$ and $B$ admit 
nontrivial finite index subgroups, and at least one of
$A, B$ has a subgroup of index greater than $2$. Then each of $A, B$ are
free products of cyclic groups. In particular both $A$ and $B$ are
quasiprojective groups.
\end{theorem}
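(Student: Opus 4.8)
The plan is to promote the hypotheses to the statement that $G = A*B$ is in fact \emph{isomorphic} to the orbifold fundamental group of an open hyperbolic $2$-orbifold, and then to extract the free factors purely algebraically by the Kurosh subgroup theorem.

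First I would exploit the free product structure directly: $G$ acts on the associated Bass--Serre tree $T$ with trivial edge stabilizers and with vertex stabilizers the conjugates of $A$ and $B$. The hypothesis that one of $A,B$ has a subgroup of index greater than $2$ forces that factor to have order at least $3$ (or to be infinite), so $A*B$ is not the infinite dihedral group $\Z/2 * \Z/2$; consequently $G$ is not virtually cyclic and has infinitely many ends, and its action on $T$ is non-elementary. Writing $G = \pi_1(X)$ for a smooth quasiprojective $X$, this non-elementary action on a tree is exactly the input for the structure theory of quasiprojective groups: such an action is induced by an orbifold fibration $p \colon X \to C$ onto an orbifold curve $C$, and therefore factors through a surjection $p_* \colon G \to \Gamma$, where $\Gamma := \pi_1^{\mathrm{orb}}(C)$.

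Next I would show $p_*$ is an isomorphism. Its kernel $K$ is the image of the fundamental group of a generic fibre under the homotopy sequence $\pi_1(F) \to G \to \Gamma \to 1$, hence finitely generated, while $\Gamma$ is infinite. If $K$ were infinite then the extension $1 \to K \to G \to \Gamma \to 1$, with $K$ finitely generated and both $K$ and $\Gamma$ infinite, would make $G$ one-ended, contradicting the conclusion above; hence $K$ is finite. But $K$ is a finite normal subgroup of the nondegenerate free product $A*B$, and such a subgroup is trivial: by Kurosh every finite subgroup is conjugate into $A$ or $B$, and since $K$ coincides with all of its conjugates we may take $K \le A$, whence for any $b \in B \setminus \{1\}$ we get $K = bKb^{-1} \le A \cap bAb^{-1} = \{1\}$. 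Thus $p_*$ is injective, so $G \cong \Gamma$. Finally $C$ must be \emph{open}: were it a closed hyperbolic orbifold, $\Gamma$ would be one-ended, contradicting the infinitely many ends of $G$. An open hyperbolic orbifold curve has orbifold fundamental group a free product of cyclic groups, so $G \cong \Z/n_1 * \cdots * \Z/n_k * F_r$.

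With this normal form in hand the conclusion is combinatorial group theory: applying the Kurosh subgroup theorem to $A \le G$ writes $A$ as a free product of a free group with subgroups of conjugates of the cyclic free factors of $G$, each such subgroup being cyclic; hence $A$, and symmetrically $B$, is a free product of cyclic groups. Since free products of cyclic groups are quasiprojective (they are fundamental groups of quasiprojective orbifold curves), both $A$ and $B$ are quasiprojective. The step I expect to be the genuine obstacle is the production of the orbifold fibration $p$ from the tree action and the verification that its homotopy sequence is the clean one used above; in full generality this fibration theorem is delicate, and this is where the remaining hypothesis --- that \emph{both} $A$ and $B$ carry proper finite-index subgroups --- is most naturally spent, allowing one to pass first to a finite \'etale cover $X' \to X$ (a finite-index subgroup meeting each of $A,B$ in a proper finite-index subgroup) on which the structure theory applies with control, and then to descend. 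The ends bookkeeping and the Kurosh step are by comparison routine.
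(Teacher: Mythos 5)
Your endgame (the ends bookkeeping showing the kernel is finite, the Kurosh argument killing a finite normal subgroup of a nondegenerate free product, and the extraction of $A$ and $B$ as free products of cyclics from the normal form $\Z/n_1 * \cdots * \Z/n_k * F_r$) is sound and matches the paper's. But the central step is a genuine gap: you invoke a ``structure theory'' asserting that a non-elementary action of a quasiprojective group on its Bass--Serre tree is induced by an orbifold fibration $p\colon X \Ra C$. No such theorem is available off the shelf in the quasiprojective category, and it cannot be, in the generality you use it: if a nondegenerate splitting alone produced the fibration, the theorem would hold with no hypotheses on $A$ and $B$ at all, i.e.\ you would have resolved Question 8.1 of Friedl--Suciu unconditionally, which is precisely what this paper does \emph{not} claim to do. (The Kähler analogue you are implicitly leaning on --- Gromov/Delzant--Gromov on cuts --- lives in a setting where nontrivial free products are outright impossible; the quasiprojective setting is genuinely different.) Your closing remark that the finite-index hypothesis is ``spent'' on passing to a cover ``on which the structure theory applies with control'' does not repair this, because you never say what that structure theory is or why the cover helps.

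The paper's mechanism is concrete and is exactly where the hypothesis is used: the finite-index subgroups give finite quotients $A_1, B_1$ with at least one of order greater than $2$, so $A*B$ surjects onto the virtually free group $A_1 * B_1$ and hence some finite-index subgroup $G_0$ surjects onto $F_3$. A surjection onto a group of deficiency at least $3$ is the input for Bauer's logarithmic Castelnuovo--de Franchis theorem (Theorem \ref{qfibn}), which produces the pencil, and the Nori-type Proposition \ref{qfibnes} then yields $1 \Ra H \Ra G_0 \Ra \pi_1^{orb}(C) \Ra 1$ with $H$ finitely generated, whence $H$ is trivial and $A*B$ is virtually free. Note also a second point you elide: the pencil is produced only on the finite cover, and to get the exact sequence for $G$ itself one must show the deck group permutes the fibers and descends to an algebraic action on $C$ (Proposition \ref{fibns} and Proposition \ref{freeqp} in the paper), so that $G$ surjects onto $\pi_1^{orb}(C/Q)$ with finite kernel. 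If you replace your fibration-from-tree-action step by the surjection-onto-$F_3$ plus Bauer's theorem, and add the descent step, your argument becomes the paper's proof.
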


A good complexification of a closed smooth manifold $M$ is
defined to be a smooth affine algebraic variety $U$ over the real
numbers such that $M$ is diffeomorphic to
$U (\R)$ (the locus of closed points defined over $\mathbb R$) and
the inclusion $U (\R) \,\Ra\, U (\C)$ is a homotopy equivalence
\cite{tot}. Totaro asks whether a closed smooth manifold $M$ admits a good
complexification if and only if $M$ admits a metric of non-negative
curvature \cite[p. 69, 2nd para]{tot}. As an application of Theorem \ref{qpcomb0},
we prove this in the following strong 
form for $3$-manifolds.

\begin{theorem}[{See Theorem \ref{totaroconj}}]\label{totaroconj0}
A closed $3$-manifold $M$ admits a good complexification if and only if one
of the following hold:
\begin{enumerate}
\item $M$ admits a flat metric,

\item $M$ admits a metric of constant positive curvature,

\item $M$ is covered by the (metric) product of a round $S^2$ and $\Rr$. 
\end{enumerate}
\end{theorem}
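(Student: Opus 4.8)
The plan is to prove the two implications separately. The conceptual observation organizing everything is that the three listed classes are exactly the closed $3$-manifold geometries that carry a metric of nonnegative sectional curvature---spherical space forms, flat manifolds, and quotients of the product of a round $S^2$ with $\Rr$---so the statement is the sharp $3$-dimensional form of Totaro's conjecture. Throughout I would use the geometrization theorem to phrase everything in terms of the eight geometries.

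For the direction asserting that a good complexification forces one of (1)--(3): if $M$ admits a good complexification $U$, then $M \simeq U(\C)$ for a smooth complex affine variety $U(\C)$, whence $\pi_1(M)\cong \pi_1(U(\C))$ is quasiprojective. Theorem \ref{qpcomb0} then forces $M$ to be Seifert fibered, or $\pi_1(M)$ to be virtually free or virtually a surface group. This already removes the hyperbolic geometry $\mathbb{H}^3$ and the Sol geometry, since their groups are one-ended and not virtually surface groups, and their manifolds are not Seifert fibered. What survives is the list spherical, $S^2\times\Rr$, flat, Nil, $\mathbb{H}^2\times\Rr$, $\widetilde{SL_2}$ (the Seifert cases), together with connected sums and products in the virtually free or virtually surface cases; the task is to discard everything except the nonnegatively curved geometries.

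The main obstacle is exactly this discarding step, because quasiprojectivity of $\pi_1$ alone is too weak: the groups of Nil, $\mathbb{H}^2\times\Rr$, and $\widetilde{SL_2}$ manifolds are all quasiprojective, being realized by $\C^*$-bundles over elliptic or higher genus curves. Here I would use the full force of a good complexification rather than just the $\pi_1$ statement. Since each of these manifolds is closed and aspherical, $U(\C)$ would be an aspherical smooth affine $3$-fold, i.e.\ a $K(\pi_1(M),1)$ whose group is a $3$-dimensional Poincar\'e duality group; I would combine the topological restrictions that Totaro \cite{tot} derives for good complexifications with the finer realization results of Section \ref{realzn} (Theorem \ref{finalcomb}) to force the aspherical case to be virtually abelian, hence flat by Bieberbach. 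In the virtually free or virtually surface cases one must show that only finite and virtually cyclic groups occur, which yields spherical space forms together with the $S^2\times\Rr$ manifolds (note that $\mathbb{RP}^3\#\mathbb{RP}^3$, with group the infinite dihedral group, is a connected sum that legitimately survives in family (3)); the free product result stated above is the natural tool for excluding the higher-rank free products arising from other connected sums. Making these homotopy-theoretic restrictions genuinely collapse the quasiprojective list to the virtually abelian, finite, and virtually cyclic cases is where the argument must be most careful.

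For the converse I would exhibit explicit good complexifications geometry by geometry, following the strategy of \cite{tot}. The affine quadric $\{z_0^2+z_1^2+z_2^2+z_3^2=1\}$ over $\Rr$ has real locus $S^3$ and complex locus diffeomorphic to $TS^3$, hence homotopy equivalent to $S^3$, so it is a good complexification of $S^3$; a free isometric action of a finite $\Gamma\subset O(4)$ extends to a free algebraic action on the quadric, and the quotient is a good complexification of $S^3/\Gamma$. The affine conic $\{x^2+y^2=1\}$ is a good complexification of $S^1$ (its complex locus being $\C^*$), products give $(\C^*)^3$ as a good complexification of $T^3$, and complexifying the affine Bieberbach actions yields good complexifications of all flat $3$-manifolds. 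Finally, combining the quadric complexification of the round $S^2$ with the conic complexification of $S^1$, and accounting for the finite isometric twistings, produces good complexifications of every closed manifold covered by $S^2\times\Rr$. This completes the plan; as indicated, the delicate part is the group-theoretic elimination in the first direction.
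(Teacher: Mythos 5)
Your overall architecture (reduce via Theorem \ref{qpcomb0} to Seifert-fibered or virtually free/virtually surface, then eliminate the geometries of non-nonnegative curvature, then construct complexifications for the survivors) matches the paper, and your converse-direction constructions are essentially the ones the paper cites from \cite{tot} and \cite{kul}. But the forward direction has a genuine gap precisely at the step you yourself flag as ``where the argument must be most careful'': you never actually eliminate the Nil, $\mathbb{H}^2\times\Rr$, $\widetilde{Sl_2(\R)}$ and $\#_m S^2\times S^1$ ($m>1$) cases, and the tools you propose for doing so cannot work. The fundamental groups of circle bundles over surfaces of genus $\geq 1$ are quasiprojective (they are realized by $\C^*$-bundles over curves, cf.\ Proposition \ref{sfsqp}), they are aspherical $3$-dimensional Poincar\'e duality groups, and the nontrivial bundles have vanishing cup product on $H^1$, so they pass every test in Theorem \ref{finalcomb}, every asphericity/duality consideration, and Totaro's own cohomological restrictions (Theorem \ref{tot3}). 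Likewise $F_m$ is quasiprojective and is a free product of cyclic groups, so Theorem \ref{qpfs} does not exclude $\#_m S^2\times S^1$; if anything it confirms that no purely group-theoretic argument can. This is exactly why the problem was open before this paper.

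The missing idea is the systematic use of the real structure: the Galois antiholomorphic involution $\sigma$ of $U(\C)$ acts trivially on $\pi_1$ and $H^1(U(\C),\C)$ because $U(\R)\hookrightarrow U(\C)$ is a homotopy equivalence, so by the correspondence between isotropic subspaces and pencils (Theorem \ref{isotropic}), Bauer's irrational pencil $f\colon U(\C)\to C$ commutes with $\sigma$ and descends to an antiholomorphic involution $\sigma_1$ of $C$ whose fixed locus contains $f(M)$. In the hyperbolic-base Seifert case $f|_M$ is surjective, forcing $C^{\sigma_1}=C$, which is absurd; in the $\#_m S^2\times S^1$ case $f(M)$ would be a connected one-dimensional proper submanifold of a noncompact curve $C$ with $b_1(C)\geq 3$ carrying all of $\pi_1(C)$, which is impossible (Propositions \ref{sfshyp} and \ref{spheres}). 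The Nil case needs yet another argument, via the quasi-Albanese map and the vanishing of $c_1$ of every algebraic line bundle on $\C^*\times\C^*$, which forces the real locus to be the $3$-torus (Proposition \ref{sfsnil}). Without these real-structure and Albanese arguments your elimination step does not go through.
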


Curiously, the proof of Theorem \ref{totaroconj0}
is direct and there is virtually no use of the method or results of
\cite{kul, tot, dps,fs}. Our main tools from recent developments in
$3$-manifolds are: 
\begin{enumerate} 
\item The Geometrization Theorem and its consequences (see \cite{afw}). 
\item Largeness of $3$-manifold groups \cite{agol, wise, ln, clr, lack}.
\end{enumerate}

The basic complex geometric tool is a theorem of Bauer, \cite{bauer},
regarding existence of irrational pencils for quasiprojective 
varieties (the theorem of Bauer is recalled in Theorem \ref{qfibn}).
It is a useful existence result in the same genre as the classical
Castelnuovo-de Franchis Theorem and a theorem of Gromov \cite{gromov-pi1, abckt}.

As a consequence of our results we deduce the 
restrictions on quasiprojective $3$-manifold groups obtained by the authors 
of \cite{dps,fs, kot13} and the restrictions on good complexifications of 
$3$-manifolds deduced in \cite{tot} (this is done in in Section \ref{cons}). We
also indicate, in Remark \ref{closed2},
how to deduce the classification
of (closed) 3-manifold K\"ahler groups \cite{ds,kotschick, bms} using the techniques
of Theorem \ref{qpcomb0},  thus providing a unified treatment of known results.

\section{Preliminaries}

\subsection{Three-manifold groups}

We collect together facts about $3$-manifold groups that will be used here.

By a quasi-K\"ahler manifold we mean the complement of a closed complex analytic
subset of a compact connected K\"ahler manifold.

\begin{defn}
\mbox{}
\begin{enumerate}
\item A group $G$ is {\bf quasiprojective} (respectively, quasi-K\"ahler) if it can
be realized as the fundamental group of a smooth quasiprojective complex variety
(respectively, quasi-K\"ahler manifold).

\item A group $G$ is a {\bf $3$-manifold group} if it can be realized as the
fundamental group of a compact real $3$-manifold (possibly with boundary).

\item A group $G$ is {\bf large} if it has a finite index
subgroup $S$ that admits a surjective homomorphism onto a non-abelian free group. Such
a subgroup $S$ necessarily has a finite index subgroup that admits a surjective homomorphism
onto $F_3$.
\end{enumerate}
\end{defn}

A {\bf prime $3$-manifold} (possibly with boundary) is a $3$-manifold that cannot be decomposed as a non-trivial connected sum.
{\bf Graph manifolds} are prime $3$-manifolds obtained by gluing finitely many 
Seifert-fibered JSJ components along boundary tori. In particular, torus bundles
over a circle are graph manifolds. A 3-manifold $M$ is {\bf geometric} if it is a quotient of one of the following spaces (equipped with standard Riemannian
metrics) by a discrete group
acting freely properly discontinuously via isometries:
 $S^3, {\mathbb E}^3, {\mathbb H}^3,  {\mathbb H}^2 \times {\mathbb R},  {S}^2 \times {\mathbb R}, Nil, Sol, \widetilde{Sl_2({\mathbb R})}$.
In this paper we shall mostly deal with closed 3-manifolds. If $M$ is a compact 3-manifold {\it with} boundary, we say that $M$ is geometric, if the
interior of $M$ is geometric. Note that in this case, the interior of $M$ need not even have finite volume.
 Among the graph manifolds,
$Sol$ and Seifert manifolds are geometric; the rest are non-geometric. It follows
that the gluing maps between the Seifert components in non-geometric manifolds do not
identify circle fibers. (See \cite[p. 59]{afw} and \cite[Ch. 3]{hempel}.)

The following omnibus theorem is the consequence of
the Geometrization theorem of Thurston--Perelman and
work of a large number of people culminating in the resolution of the virtual Haken problem
by Agol and Wise. See \cite{afw} (especially Diagram 1, p. 36) for an excellent account.

\begin{theorem}\label{large}
If a $3$-manifold $M$ has a prime component $N$ satisfying one of the following
three conditions, then the fundamental group of $M$ is large.
\begin{enumerate}
\item $N$ is a compact orientable irreducible $3$-manifold with
non-empty boundary such that M is {\bf not} an $I$-bundle (``$I$'' is a closed interval)
over a surface with non-negative Euler characteristic \cite{clr, lack}.
\item $N$ is closed hyperbolic \cite{agol, wise}.
\item $N$ is a closed, non-geometric graph manifold \cite{ln}.
\end{enumerate}

If $\pi_1(M)$ is a nontrivial free product $G_1 \ast G_2$ (e.g., if $M$ is not
prime), where at least one $G_i$ has order greater than $2$, then
the fundamental group of $M$ is large.
The exceptional case $(\Z/2\Z)\ast(\Z/2\Z)$ is realized only by the connected sum
of two real projective spaces.
\end{theorem}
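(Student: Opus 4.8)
The plan is to assemble the statement from the deep cited inputs together with two elementary group-theoretic observations. First I would record an \emph{inheritance lemma}: if $G$ surjects onto a large group $Q$, then $G$ is large. Indeed, if $S\le Q$ has finite index and surjects onto a nonabelian free group, then the full preimage of $S$ in $G$ has finite index (equal to $[Q:S]$) and surjects onto $S$, hence onto a nonabelian free group. Second, for any prime component $N$ of $M$ the Kneser--Milnor decomposition exhibits $\pi_1(N)$ as a free factor, hence a retract, of $\pi_1(M)$; connected sums are formed in the interior, so the presence of boundary is irrelevant here, and $\pi_1(M)$ surjects onto $\pi_1(N)$. By the inheritance lemma it therefore suffices to prove that $\pi_1(N)$ is large under each of the hypotheses $(1)$--$(3)$.

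For this I would quote the cited theorems directly. In case $(1)$, largeness of $\pi_1(N)$ for a compact orientable irreducible $3$-manifold with nonempty boundary that is not an $I$-bundle over a surface of nonnegative Euler characteristic is precisely the theorem of Cooper--Long--Reid and Lackenby \cite{clr, lack}. In case $(2)$, Agol's resolution of the virtual Haken conjecture together with Wise's theory of special cube complexes \cite{agol, wise} gives that $\pi_1(N)$ is virtually compact special, and in particular large. In case $(3)$, largeness of the fundamental group of a closed non-geometric graph manifold is the content of \cite{ln}. In each instance the inheritance lemma then promotes largeness from $\pi_1(N)$ to $\pi_1(M)$.

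The final assertion I would prove by pure group theory. Since $\pi_1(M)$ is residually finite (a consequence of geometrization; see \cite{afw, hempel}) and $G_1,G_2$ are subgroups of it, both $G_i$ are residually finite. Assuming $|G_1|>2$ and $G_2$ nontrivial, residual finiteness lets me choose nontrivial finite quotients $\bar G_1,\bar G_2$ with $|\bar G_1|\ge 3$ and $|\bar G_2|\ge 2$, giving a surjection of $G_1*G_2$ onto $\bar G_1*\bar G_2$. The finite free product $\bar G_1*\bar G_2$ acts on its Bass--Serre tree with finite vertex stabilizers and is therefore virtually free; the kernel of the natural map $\bar G_1*\bar G_2\to \bar G_1\times\bar G_2$ is torsion-free (torsion in a free product is conjugate into a factor, and maps to a nontrivial element of the direct product) and hence free, of rank $(|\bar G_1|-1)(|\bar G_2|-1)\ge 2$ by an Euler characteristic count. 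Thus $\bar G_1*\bar G_2$ is large, and the inheritance lemma finishes this case. For the exceptional case, $(\Z/2\Z)*(\Z/2\Z)$ is the infinite dihedral group, which contains $\Z$ with index $2$ and so is virtually cyclic and not large; and by uniqueness of the prime decomposition (Grushko) together with elliptization --- a closed $3$-manifold with fundamental group $\Z/2\Z$ is $\R P^3$ --- any $M$ realizing it must be $\R P^3 \,\#\, \R P^3$.

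The main obstacle is conceptual rather than computational. Conditions $(1)$--$(3)$ rest on the deepest recent theorems in the subject (Agol--Wise, Cooper--Long--Reid, Lackenby, and \cite{ln}), so the genuine care lies in matching their exact hypotheses --- especially the $I$-bundle exclusion in $(1)$ and the non-geometricity assumption in $(3)$ --- to the manifold at hand, while the elementary reduction to finite factors and the Euler characteristic bookkeeping in the free-product case must be carried out uniformly over all admissible orders.
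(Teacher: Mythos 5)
Your proposal is correct and coincides with what the paper intends: the paper states Theorem \ref{large} as an ``omnibus'' consequence of the cited literature (\cite{agol, wise, clr, lack, ln}, via \cite{afw}) and offers no proof of its own, and the glue you supply --- largeness is inherited under surjections, a prime component's fundamental group is a free factor and hence a retract of $\pi_1(M)$, and the free-product case via residual finiteness and the rank-$(|\bar G_1|-1)(|\bar G_2|-1)$ Cartesian subgroup of $\bar G_1 * \bar G_2$ --- is exactly the standard argument being invoked implicitly. No gaps; your closing remark that the real care lies in matching the exact hypotheses of the quoted theorems is also the right emphasis.
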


As an immediate corollary we have the following:

\begin{cor} \label{largecor} 
If the fundamental group of $M$ is not large, then $M$ is Seifert-fibered or
a Sol manifold.
\end{cor}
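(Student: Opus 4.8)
The plan is to derive Corollary~\ref{largecor} as a direct contrapositive consequence of Theorem~\ref{large}, reasoning through a prime decomposition of $M$. First I would write $M$ as a connected sum $N_1 \# \cdots \# N_k$ of prime $3$-manifolds (and recall that we have thrown away spherical boundary components, so there are no $S^1 \times S^2$ or $S^2$-factors to worry about separately). The key dichotomy is on the number of prime summands. If $k \geq 2$, then $\pi_1(M)$ is a nontrivial free product $G_1 * G_2$; by the last clause of Theorem~\ref{large}, this group is large unless we are in the single exceptional case $(\Z/2\Z) * (\Z/2\Z)$, realized by $\mathbb{RP}^3 \# \mathbb{RP}^3$. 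But that exceptional manifold is a $Sol$-free Seifert-fibered space (it carries an $S^2 \times \mathbb R$ geometry), so it falls under the allowed conclusion; hence whenever $M$ is non-prime and $\pi_1(M)$ is not large, $M$ is Seifert-fibered. This disposes of the reducible case.

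The remaining case is $M$ prime, so $M = N$ is itself a single prime piece, and I would run through the geometric classification to show that non-largeness forces $M$ into the Seifert-fibered or $Sol$ bucket. By Geometrization, a prime $3$-manifold is either geometric or is glued from Seifert pieces along tori (a graph manifold), or contains a hyperbolic JSJ piece. If $M$ (or a prime component) has nonempty boundary and is not the excluded $I$-bundle over a surface of non-negative Euler characteristic, condition~(1) of Theorem~\ref{large} makes $\pi_1(M)$ large; the excluded $I$-bundles are themselves Seifert-fibered, so non-largeness again lands us in the allowed conclusion. For closed $M$: condition~(2) handles the closed hyperbolic case (large, hence excluded by non-largeness), and condition~(3) handles the closed non-geometric graph manifolds (large). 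What survives is precisely the closed geometric manifolds modeled on $S^3,\ \mathbb E^3,\ \mathbb H^2 \times \mathbb R,\ S^2 \times \mathbb R,\ Nil,\ \widetilde{Sl_2(\R)}$ — all of which are Seifert-fibered — together with the $Sol$ manifolds, exactly matching the stated conclusion.

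The main obstacle, and the step I would treat most carefully, is the bookkeeping at the boundary between the geometries: making sure that every manifold \emph{not} covered by one of the three largeness hypotheses in Theorem~\ref{large} is genuinely Seifert-fibered or $Sol$, with no leftover cases. In particular I would verify that the six geometries $S^3, \mathbb E^3, \mathbb H^2\times\R, S^2\times\R, Nil, \widetilde{Sl_2(\R)}$ are all Seifert-fibered (standard, via the classification of geometric structures on $3$-manifolds), that $\mathbb H^3$ is the only one excluded as large, and that the two non-Seifert graph-manifold phenomena — $Sol$ manifolds and the non-geometric graph manifolds — split correctly, with $Sol$ allowed and the non-geometric ones excluded by condition~(3). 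I would also double-check the bounded case, confirming that the only irreducible bounded prime manifolds escaping condition~(1) are the $I$-bundles over surfaces of non-negative Euler characteristic, and that these are Seifert-fibered. Once this case-matching is complete, the corollary follows immediately by contraposition, with essentially no further computation required.
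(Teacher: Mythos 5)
Your route---contraposition of Theorem \ref{large} through the prime decomposition---is exactly what the paper intends: the paper offers no written argument at all (it calls the corollary ``immediate''), so a careful case-match is the right thing to supply. Your treatment of the non-prime case (the free-product clause, with the exceptional $(\Z/2\Z)\ast(\Z/2\Z)$ realized only by the connected sum of two real projective spaces, which is Seifert-fibered) and of the bounded case via condition (1) is fine, modulo the small point that condition (1) assumes orientability, so in the non-orientable bounded case you should pass to the orientation double cover and use that largeness descends from finite-index subgroups.

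There is, however, one closed prime case that your argument does not actually dispatch: a closed prime manifold whose JSJ decomposition is nontrivial and contains a hyperbolic piece (a ``mixed'' manifold). Your own trichotomy names this branch (``or contains a hyperbolic JSJ piece''), but you then only apply condition (2) to the \emph{geometric} hyperbolic case and condition (3) to non-geometric \emph{graph} manifolds. A mixed manifold is neither: it is not closed hyperbolic, it is not a graph manifold (the paper defines graph manifolds to have only Seifert-fibered JSJ components), and condition (1) does not apply because its hypothesis concerns a prime \emph{component} of $M$ with non-empty boundary, not a JSJ piece. Consequently your concluding sentence ``what survives is precisely the closed geometric manifolds modeled on the six Seifert geometries together with the $Sol$ manifolds'' is not justified as written: mixed manifolds also survive the literal hypotheses (1)--(3), and they are neither Seifert-fibered nor $Sol$, so they cannot be absorbed into the allowed conclusion. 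The corollary is still true, because mixed manifolds are known to have large fundamental group---this is part of the same body of work summarized in Diagram 1 of \cite{afw} (virtual specialness of mixed manifolds together with Agol's theorem)---but that fact has to be invoked explicitly; it is not a formal consequence of the three conditions listed in Theorem \ref{large}. Adding one sentence to that effect closes the gap.
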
 

A finitely presented group is {\bf coherent} if any finitely generated 
subgroup is finitely presented.

\begin{theorem}[\cite{scott}]
Fundamental groups of compact $3$-manifolds are coherent. \label{coher} \end{theorem}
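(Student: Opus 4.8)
The plan is to derive coherence from the existence of a compact core. First I would reduce the statement to a geometric one: given a finitely generated subgroup $G \le \pi_1(M)$, I want to show that $G$ is finitely presented. Passing to the covering space $\widehat{M} \to M$ corresponding to $G$, we have $\pi_1(\widehat{M}) \cong G$, so $\widehat{M}$ is a (generally noncompact) $3$-manifold whose fundamental group is finitely generated. The key input I would invoke is the \emph{Scott Core Theorem}: every $3$-manifold with finitely generated fundamental group contains a compact connected submanifold-with-boundary $C$ (a \emph{compact core}) such that the inclusion $C \hookrightarrow \widehat{M}$ induces an isomorphism on $\pi_1$.

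Granting the core theorem, the conclusion is immediate. A compact manifold (with or without boundary) has the homotopy type of a finite CW complex, so it has finitely presented fundamental group; since $\pi_1(C) \cong \pi_1(\widehat{M}) \cong G$, the group $G$ is finitely presented. As $G$ was an arbitrary finitely generated subgroup, $\pi_1(M)$ is coherent.

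The substance of the argument therefore lies in constructing the core $C$. I would begin from the observation that, because $G$ is finitely generated, finitely many loops representing generators lie in a compact region, so there is a compact $3$-submanifold $C_0 \subseteq \widehat{M}$ whose fundamental group surjects onto $\pi_1(\widehat{M})$. The task is then to modify $C_0$ until the inclusion is also $\pi_1$-injective. I would examine the boundary surfaces: whenever some component of $\partial C_0$ is compressible in $\widehat{M}$, the Loop Theorem--Dehn's Lemma package produces an embedded compressing disk, along which I compress $C_0$, obtaining a new compact submanifold that still carries all of $\pi_1(\widehat{M})$ but has simpler boundary.

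The main obstacle is ensuring that this simplification process terminates. One needs a complexity measure on the boundary --- in Scott's original treatment an Euler-characteristic/genus count on the boundary components, organized by a Haken-style hierarchy --- that strictly decreases at each compression, so that after finitely many steps no boundary component is compressible; at that stage the inclusion $C \hookrightarrow \widehat{M}$ is $\pi_1$-injective and $C$ is the desired core. I would note that modern treatments streamline exactly this termination step using minimal-surface or least-weight normal-surface techniques (Meeks--Yau, Rubinstein--Swarup), replacing the delicate combinatorial bookkeeping by an a priori area or weight bound; but the logical skeleton --- reduce to the compact core theorem, then read off finite presentability --- is unchanged.
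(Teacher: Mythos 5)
The paper gives no proof of this statement; it is quoted directly from Scott's 1973 paper, so there is nothing internal to compare against. Your reduction is exactly the standard (and Scott's own) route: pass to the cover $\widehat{M}$ corresponding to a finitely generated subgroup $G$, invoke the Compact Core Theorem to get a compact $C\hookrightarrow\widehat{M}$ inducing an isomorphism on $\pi_1$, and conclude that $G\cong\pi_1(C)$ is finitely presented because a compact manifold has the homotopy type of a finite CW complex. That part of your argument is complete and correct.

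The only caveat is that essentially all of the content lives in the core theorem itself, and your sketch of it is optimistic: starting from a $\pi_1$-surjective compact $C_0$ and repeatedly compressing boundary components does not obviously terminate, and more importantly incompressibility of $\partial C$ alone does not yield $\pi_1$-injectivity of $C\hookrightarrow\widehat{M}$ without controlling the complementary regions (cutting along a compressing disk inside $C_0$ can also destroy surjectivity). Scott's actual argument is considerably more delicate than ``compress until the boundary is incompressible.'' You flag this honestly and defer to the literature, which is fine here since the theorem is being cited rather than reproved; just be aware that the termination/injectivity step is the theorem, not a technical afterthought.
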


A consequence is the following \cite[Ch. 11]{hempel}. 

\begin{prop}\label{les} Let $1\,\Ra\, H\,\Ra\, G\,\Ra\, Q\,\Ra\, 1$ be a short
exact sequence of infinite finitely generated groups with $G$ the fundamental group of
a compact orientable $3$-manifold $N$ (possibly with boundary). Then
\begin{enumerate}
\item either $H$ is infinite cyclic and $Q$ is the fundamental group of a compact $2$-orbifold (possibly with boundary), in which case $N$ is Seifert-fibered;

\item or $H$ is the fundamental group of a compact surface (possibly with boundary) and $Q$ is virtually
cyclic. \end{enumerate}
\end{prop}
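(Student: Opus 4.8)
The plan is to reduce to the case where $N$ is irreducible and aspherical, and then to split the argument according to whether or not $H$ is infinite cyclic, using Poincaré duality for groups in the closed case and Stallings' fibration theorem in the bounded case. The starting observation is that, by coherence (Theorem \ref{coher}), the finitely generated normal subgroup $H$ is finitely presented, and so is $Q$; this is what makes the later cohomological-finiteness arguments applicable.

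First I would dispose of the reducible case. If $N$ is not prime then $G=\pi_1(N)$ is a nontrivial free product, and if $N$ has an $S^2\times S^1$ summand then $G$ acquires a $\Z$ free factor; in either event $G$ has more than one end. But a finitely generated group that contains a finitely generated infinite normal subgroup of infinite index is one-ended, and here $H$ has infinite index precisely because $Q$ is infinite. This rules out every nontrivial free product, and it also rules out the prime-but-reducible orientable case $S^2\times S^1$ (for which $G\cong\Z$ is two-ended, and in any event $\Z$ admits no such normal subgroup). Hence $N$ is irreducible. Since $\pi_1(N)$ is infinite, the sphere theorem then forces $\pi_2(N)=0$ and $N$ is aspherical; consequently $G$ is torsion-free with $\mathrm{cd}(G)\le 3$, and $G$ is a $PD_3$ group when $\partial N=\emptyset$, while $\mathrm{cd}(G)\le 2$ when $\partial N\ne\emptyset$ (an aspherical $3$-manifold with boundary has the homotopy type of a $2$-complex).

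The heart of the matter is the dichotomy on $H$. If $H\cong\Z$, then $H$ is an infinite cyclic normal subgroup of $\pi_1(N)$, and the Seifert Fibered Space Theorem (a consequence of Geometrization; see \cite{afw}) shows that $N$ is Seifert-fibered with $H$ carried by a regular fiber; the quotient $Q=G/H$ is then the orbifold fundamental group of the base $2$-orbifold, which is alternative (1). If instead $H\not\cong\Z$, I would argue in the closed case via Strebel's theorem, which bounds $\mathrm{cd}(H)\le 2$ for an infinite-index subgroup of a $PD_3$ group, together with the Bieri--Strebel theory of normal subgroups of Poincaré duality groups applied to $1\to H\to G\to Q\to 1$ with $H$ finitely presented and $Q$ infinite: this forces $\mathrm{cd}(H)=2$, makes $H$ a $PD_2$ group, and makes $Q$ virtually infinite cyclic. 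By the Eckmann--M\"uller--Linnell characterization a $PD_2$ group is a closed surface group, giving alternative (2). In the bounded case, where Poincaré duality is unavailable, I would instead pass to a finite cover realizing a $\Z$-quotient of $Q$ and invoke Stallings' fibration theorem (\cite{hempel}): $N$ then fibers over the circle with fiber a compact surface whose fundamental group is $H$, and $Q$ is again virtually cyclic.

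The main obstacle is the non-cyclic branch: proving that a finitely generated infinite normal subgroup of infinite index must be a surface group, rather than some more exotic finitely presented group, is exactly where the special structure of $3$-manifold groups has to be used, either through Poincaré duality (closed case) or through the fibration and JSJ technology (bounded case). A secondary technical nuisance is the bookkeeping between the two alternatives in the presence of boundary, where a ``surface group'' may be free and the crude cohomological-dimension count no longer cleanly separates the infinite-cyclic case from the surface case; here one leans directly on the fibration theorem to produce the surface fiber and identify $H$.
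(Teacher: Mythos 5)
The paper offers no proof of this proposition: it is quoted from Hempel's book (Chapter 11, essentially Theorem 11.1 there), where the argument is topological, running through the covering space of $N$ with fundamental group $H$, Scott's compact core, and an analysis of ends. Your algebraic route through Poincar\'e duality is therefore genuinely different. Your opening reduction is sound (a finitely generated infinite normal subgroup of infinite index forces $G$ to be one-ended, hence $N$ irreducible and aspherical), and the $H\cong\Z$ branch via the Seifert Fibered Space Theorem is essentially right, modulo the small point that an infinite cyclic normal subgroup need only be a finite-index subgroup of the fiber subgroup, so $Q$ is a priori only a finite cyclic extension of the base orbifold group rather than that group itself (an imprecision arguably already present in the statement).

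The non-cyclic branch, however, has two genuine gaps. In the closed case, Strebel gives $\mathrm{cd}(H)\le 2$, but the appeal to Bieri--Strebel to ``force $\mathrm{cd}(H)=2$'' does not dispose of the possibility $\mathrm{cd}(H)=1$, i.e.\ $H$ a finitely generated nonabelian free group: Bieri's extension theorems for $PD_n$ groups require either $\mathrm{cd}(Q)<\infty$ or $\mathrm{cd}(H)=n-1$ as a hypothesis, and neither is available at that point. Excluding a nonabelian free normal subgroup of infinite index in a $PD_3$ group is a real step needing its own argument (Hillman's treatment of $PD_3$ groups does it with the LHS spectral sequence and $H^*(G,\Z[G/H])$; Hempel does it with covering spaces); as written, your sketch silently assumes it away. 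The bounded case is worse: it is circular. To invoke Stallings' fibration theorem you need a finite-index subgroup of $G$ surjecting onto $\Z$ with kernel exactly $H$, i.e.\ you need $Q$ to be virtually infinite cyclic already --- which is precisely the assertion to be proved in alternative (2). When $\partial N\neq\emptyset$ and $\mathrm{cd}(G)\le 2$, Poincar\'e duality is unavailable and nothing in the proposal establishes the virtual cyclicity of $Q$; this is exactly where the compact-core-and-ends argument of Hempel's Chapter 11 (the paper's actual source) does the real work.
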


Another theorem that will be used is:

\begin{theorem}[\cite{bass}] A finitely generated group G is virtually free
if and only if G can be represented as the fundamental group of a
finite graph of groups where all vertex and edge groups are
finite.\label{vfree}
\end{theorem}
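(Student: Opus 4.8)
The statement is an ``if and only if,'' and I would treat the two implications separately, the forward direction (graph of finite groups $\Rightarrow$ virtually free) being routine and the converse being the substantial one. The natural framework throughout is Bass--Serre theory, which dictionary-translates between graph-of-groups decompositions and actions on trees, so both directions reduce to manufacturing or exploiting a suitable $G$-tree.

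For the easy direction, suppose $G$ is the fundamental group of a finite graph of groups with all vertex and edge groups finite. By Bass--Serre theory $G$ acts (without inversions, after a barycentric subdivision if necessary) on the associated Bass--Serre tree $T$, with finite quotient $G\backslash T$ and with every vertex and edge stabilizer a conjugate of one of the finitely many finite vertex or edge groups. The plan is to produce a torsion-free subgroup of finite index: such a subgroup acts freely on $T$, and a group acting freely (without inversion) on a tree is free (this is the Bass--Serre proof of Nielsen--Schreier), so $G$ is virtually free. To get the torsion-free subgroup I would invoke a standard argument: any finite subgroup of $G$ fixes a vertex of $T$, hence is conjugate into a vertex group, so the finitely many finite subgroups have bounded order; constructing a homomorphism to a finite group that is injective on each of the finitely many vertex groups (equivalently, using residual finiteness of finite graphs of finite groups) yields a finite-index normal kernel meeting every finite subgroup trivially, hence torsion-free.

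For the hard direction I would first normalize the hypothesis: a finitely generated virtually free $G$ contains a finite-index free subgroup, and replacing it by its normal core gives a short exact sequence $1 \to F \to G \to Q \to 1$ with $F$ free of finite rank and $Q$ finite. Two facts are then worth recording: $G$ is finitely presented (a finite extension of a finitely generated free group), and every finitely generated subgroup of $G$ is again virtually free. The goal is now purely to exhibit an action of $G$ on a tree with finite stabilizers and finite quotient, after which Bass--Serre returns the desired finite graph of finite groups. To build this action I would use Stallings' ends theorem—a finitely generated group with more than one end splits nontrivially over a finite subgroup—and split $G$ repeatedly over finite subgroups. Dunwoody's accessibility theorem, applicable precisely because $G$ is finitely presented, guarantees that this process terminates and produces a finite graph-of-groups decomposition with finite edge groups in which each vertex group has at most one end. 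Finally, each vertex group is a finitely generated subgroup of $G$, hence virtually free, hence has $0$, $2$, or $\infty$ ends and \emph{never} exactly one; so ``at most one end'' forces $0$ ends, i.e.\ the vertex group is finite. Thus all vertex and edge groups are finite, completing the converse.

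The main obstacle is the termination step in the hard direction: a priori the iterated splitting over finite subgroups could continue indefinitely, and it is exactly Dunwoody's accessibility (leaning on finite presentability) that forbids this and keeps the graph of groups finite. Everything else—Stallings' theorem, the end-count of virtually free groups, and the Bass--Serre translation—is standard input, so I expect the bookkeeping to be light once accessibility is in hand. As an alternative to the ends-and-accessibility route one could follow the more hands-on Karrass--Pietrowski--Solitar approach, letting $F$ act freely on a tree $T$ with finite quotient graph $X = T\backslash F$ and constructing a $G$-tree directly from the extension $1\to F\to G\to Q\to 1$; but identifying the (finite) vertex stabilizers there is of comparable difficulty, so I would favor the ends-based argument for cleanliness.
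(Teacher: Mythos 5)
The paper offers no proof of Theorem \ref{vfree} at all: it is quoted as a known result with a citation to Bass's covering theory of graphs of groups \cite{bass} (the converse direction is classically due to Karrass--Pietrowski--Solitar). Your argument is a correct and standard proof, but it is a genuinely different route from the cited source. For the easy direction you pass through a torsion-free finite-index subgroup acting freely on the Bass--Serre tree; this works, though the step you label ``standard'' --- producing a homomorphism to a finite group that is injective on every vertex group --- is itself a lemma requiring proof (it is essentially the assertion that a finite graph of finite groups is virtually torsion-free), so it deserves an explicit reference rather than a wave. For the hard direction your combination of Stallings' ends theorem, Dunwoody's accessibility (legitimately applicable since a finitely generated virtually free group is finitely presented), and the end-count of virtually free groups ($0$, $2$, or infinitely many ends, never one, so ``at most one end'' forces finiteness) is clean and complete; note that for this special case accessibility can also be obtained more cheaply, e.g.\ via Linnell's bound on torsion or by the original Karrass--Pietrowski--Solitar induction on the index of the free subgroup, which is closer in spirit to the covering-theoretic framework of \cite{bass}. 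What your ends-based route buys is conceptual uniformity (everything is a statement about $G$-trees and finite edge groups); what the Bass/KPS route buys is the avoidance of Dunwoody's accessibility machinery, replacing it with an explicit construction of the quotient graph of groups from a free action of the finite-index free subgroup.
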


\subsection{Logarithmic irrational pencil}

We shall require an extension, due to Bauer, of the classical Castelnuovo-de Franchis theorem
on the existence of an irrational pencil on a projective variety to the more general case of
quasiprojective varieties. We refer to \cite{bauer} for details and quickly recall here the
basic definitions used in this subsection (see also \cite{cat1,di} for related material).
All varieties are defined over $\C$.

A surjective morphism $f\,:\, X \,\longrightarrow\, C$ between quasi-projective varieties is
said to be a {\it fibration} if $f$ has an irreducible (and hence connected) general fiber. If
$C$ is a curve of genus greater than zero, then $f$ is called an {\it irrational pencil}. 

\begin{theorem}[{\cite[p. 442]{bauer}}]\label{qfibn}
Let $X$ be a smooth complex quasiprojective variety such that $\pi_1(X)$ admits a surjective
homomorphism to a group $G$
that admits a finite presentation with $n$ generators and $m$ relations, where
$n-m\,\geq\,3$.
Then there exists an integer $\beta \,\geq\, n-m$ and a quasiprojective curve $C$ with first Betti
number $\beta$ and a logarithmic irrational pencil $f\,:\, X \,\longrightarrow\, C$ with connected fibers.
\end{theorem}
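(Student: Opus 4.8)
The plan is to establish this as a logarithmic analogue of Gromov's theorem on K\"ahler groups, which in turn refines the classical Castelnuovo--de Franchis theorem on the existence of irrational pencils. The argument splits into a cohomological/linear-algebra part that extracts, from the presentation condition $n-m\geq 3$, a sufficiently large isotropic subspace of closed logarithmic $1$-forms on $X$, and a geometric part --- a logarithmic version of the Castelnuovo--de Franchis theorem --- that converts such a subspace into a fibration onto a curve. The first step is to fix a smooth projective compactification $\bbar X$ of $X$ with $D\,=\,\bbar X\setminus X$ a simple normal crossing divisor, so that $H^0(\bbar X,\Omega^1_{\bbar X}(\log D))$ provides holomorphic, closed logarithmic $1$-forms representing the Hodge-theoretic pieces of $H^1(X;\C)$ under the mixed Hodge structure.

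For the cohomological input, the surjection $\phi\,:\,\pi_1(X)\,\Ra\, G$ induces a cup-product-compatible injection $\phi^*\,:\,H^1(G;\C)\,\hookrightarrow\, H^1(X;\C)$. Passing to the presentation $2$-complex $K$ of $G$, with one $0$-cell, $n$ $1$-cells and $m$ $2$-cells, the Euler characteristic computation gives $b_1(K)-b_2(K)=n-m$; since $H^1(K)=H^1(G)$ and $H^2(G)$ injects into $H^2(K)$, one obtains $b_1(G)-b_2(G)\geq n-m$, and the image of the cup product $\Lambda^2 H^1(G;\C)\,\Ra\, H^2(G;\C)$ has dimension at most $b_2(G)$. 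Thus the cup product is highly degenerate on $H^1(G;\C)$, and hence on $\phi^*H^1(G;\C)\subseteq H^1(X;\C)$.

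The heart of the matter is to turn this degeneracy into an isotropic subspace of \emph{holomorphic} logarithmic forms of dimension at least $n-m$. This is Gromov's argument, adapted to the mixed setting: one uses the real (conjugation) structure on $H^1(X;\C)$ and the compatibility of the cup product with the mixed Hodge decomposition to show that a cup-product-isotropic subspace of dimension controlled by $b_1(G)-b_2(G)$ forces an isotropic subspace $U\subseteq H^0(\bbar X,\Omega^1_{\bbar X}(\log D))$, i.e.\ one with $\omega\wedge\omega'=0$ for all $\omega,\omega'\in U$, of dimension $\geq n-m$ (the shift to $n-m\geq 3$, rather than the classical $\geq 2$, accounts for the weight-filtration bookkeeping). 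With such a $U$ in hand, the logarithmic Castelnuovo--de Franchis theorem produces a fibration $f\,:\,X\,\Ra\, C$ onto a smooth quasiprojective curve $C$ with connected fibers such that every $\omega\in U$ is $f^*$ of a logarithmic $1$-form on $C$. Since $U$ injects into $H^0(\bbar C,\Omega^1(\log))$, whose dimension is at most $b_1(C)=:\beta$, we get $\beta\geq\dim U\geq n-m$, and positivity of $\beta$ makes $f$ a logarithmic irrational pencil.

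The main obstacle is the geometric part together with the mixed Hodge bookkeeping, not the linear algebra. The difficulty is that $H^1(X)$ for noncompact $X$ carries weights $1$ and $2$: the weight-$1$ part behaves like the $H^1$ of a projective variety, whereas the weight-$2$ part arises from residues of logarithmic forms along $D$ and is genuinely new. One must verify that the topologically produced isotropic subspace can be realized by honestly closed logarithmic forms of the same dimension, without rank loss when projecting onto Hodge/weight components, and one must prove the logarithmic Castelnuovo--de Franchis statement itself. Tracking the residue (weight-$2$) classes is exactly what forces the conclusion to be stated in terms of the first Betti number $\beta$ of an \emph{open} curve --- counting both genus and punctures --- rather than the genus, since residues correspond to punctures of $C$.
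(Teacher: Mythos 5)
First, a point of calibration: the paper does not prove this theorem at all --- it is imported verbatim from Bauer \cite[p.~442]{bauer}, and the only supporting machinery the paper reproduces is Theorem \ref{isotropic} (the correspondence between maximal real isotropic subspaces of $H^1(X,\C)$ and logarithmic pencils) together with Propositions \ref{qfibnprop} and \ref{qfibnes}. So your proposal is being measured against Bauer's argument rather than against anything written here. Your skeleton --- presentation $2$-complex giving $b_1(G)-b_2(G)\ge n-m$, cup-product degeneracy on $\phi^*H^1(G;\C)$, conversion to an isotropic subspace of closed logarithmic $1$-forms via the mixed Hodge structure, and a logarithmic Castelnuovo--de Franchis theorem --- is indeed the right architecture and matches the machinery quoted in Theorem \ref{isotropic}.

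There is, however, a genuine gap at the one quantitative step you commit to, namely the chain $\beta\ge\dim U\ge n-m$. A skew pairing $\Lambda^2V\to W$ with $\dim V-\dim W=k$ need not admit an isotropic subspace of dimension $k$: for $G=\pi_1(\Sigma_g)$ the cup product on $H^1(G;\C)\cong\C^{2g}$ is a nondegenerate symplectic form valued in $H^2(G;\C)\cong\C$, so $b_1(G)-b_2(G)=2g-1=n-m$, yet every isotropic subspace has dimension at most $g$. Correspondingly, when the pencil is the expected fibration over a \emph{complete} genus-$g$ curve $C$, the isotropic subspace of logarithmic forms one can realize is $f^*H^0(\bbar{C},\Omega^1_{\bbar{C}})$, of dimension $g$, not $2g-1$; the bound $\beta\ge n-m$ is rescued only because $b_1(C)=2g$ is \emph{twice} the dimension of the maximal isotropic subspace in that case. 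This is precisely the dichotomy recorded in Theorem \ref{isotropic} ($C$ complete: $\dim_\C V=g(C)$, so $\beta=2\dim V$; $C$ non-complete: $V=f^*H^1(C,\C)$, so $\beta=\dim V$), and it matters in this paper exactly in the surface-group case used in Proposition \ref{sfshyp}. Your single inequality collapses the two cases and, taken literally, would only yield $\beta\ge g$ there. Beyond this, the two hardest ingredients --- that the topologically produced isotropic subspace projects without rank loss onto closed logarithmic forms, and the logarithmic Castelnuovo--de Franchis theorem itself --- are named but deferred, so even with the counting repaired the proposal remains an outline rather than a proof.
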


The {\it proof} of Theorem \ref{qfibn} in \cite{bauer} combined with Remark 2.3(1) in \cite{bauer} furnishes the following:

\begin{prop}[\cite{bauer}]\label{qfibnprop}
Let $X$ be a smooth quasiprojective variety, and let $\bbar X$ denote a smooth compactification such that
$\bbar{X} \setminus X$ is a divisor with normal crossings. Further suppose that $\pi_1(X)$ admits a surjection onto a group $G$
that admits a finite presentation with $n$ generators and $m$ relations, where
$n-m\,\geq\,3$. Let $C, f$ be the quasiprojective curve and logarithmic pencil
obtained in Theorem \ref{qfibn}. Let $\bbar C$ denote the projective
completion of $C$. Then there exists $f_1\,:\, \bbar{X}\,\longrightarrow\,
\bbar{C}$ such that $f_1|_X \,=\, f$. In particular, the fibers of $f$
are quasiprojective.
\end{prop}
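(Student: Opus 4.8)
The plan is to extend $f$ to a rational map on $\bbar{X}$ and then show that this rational extension has no points of indeterminacy, the key input being that the base curve has positive genus. Since $f\colon X \to C$ is a morphism and $C$ is a dense open subset of the smooth projective curve $\bbar{C}$, composition produces a rational map $\bbar{f}$ from $\bbar{X}$ to $\bbar{C}$ agreeing with $f$ on the dense open subset $X \subset \bbar{X}$. The entire content is to upgrade $\bbar{f}$ to an everywhere-defined morphism $f_1\colon \bbar{X}\to\bbar{C}$; once this is done, $f_1|_X = f$ holds by density, and the statement about fibers is immediate.

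First I would record that, because $f$ is an \emph{irrational} pencil in the sense of the definition preceding Theorem \ref{qfibn}, the curve $\bbar{C}$ has genus $g \geq 1$; equivalently, $\bbar{C}$ carries no rational curves, so every morphism from $\mathbb{P}^1$ to $\bbar{C}$ is constant. This positivity of the genus, together with Remark 2.3(1) of \cite{bauer} (which records that the pencil is cut out by logarithmic $1$-forms on $\bbar{X}$ restricting to those defining $f$ on $X$), is the only feature of Bauer's construction that the argument uses.

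Next I would resolve the indeterminacy of $\bbar{f}$. As $\bbar{X}$ is smooth, the indeterminacy locus $Z$ has codimension at least $2$, and by resolution of singularities there is a composition of blow-ups with smooth centres $\sigma\colon \til{X}\to\bbar{X}$, an isomorphism over $\bbar{X}\setminus Z$, such that $\til{f} := \bbar{f}\circ\sigma\colon \til{X}\to\bbar{C}$ is a morphism. For each $z \in Z$ the fibre $\sigma^{-1}(z)$ is connected and rationally chain connected, since the exceptional fibres of blow-ups with smooth centres are covered by rational curves. Hence $\til{f}$ sends each such fibre to a single point of $\bbar{C}$: every rational curve in $\sigma^{-1}(z)$ is collapsed because $g \geq 1$, and chain connectivity forces a common image.

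Finally I would descend $\til{f}$ through $\sigma$. Let $\Gamma \subset \bbar{X}\times\bbar{C}$ be the image of $(\sigma,\til{f})$, which is the closure of the graph of $f$; the first projection $p\colon \Gamma \to \bbar{X}$ is proper and birational. By the previous step $p$ is bijective on points: over $X$ it is the graph isomorphism, and over each $z \in Z$ its fibre is the single point $\{z\}\times\til{f}(\sigma^{-1}(z))$. Since $\bbar{X}$ is normal, Zariski's Main Theorem shows $p$ is an isomorphism, and $f_1 := \mathrm{pr}_2\circ p^{-1}\colon \bbar{X}\to\bbar{C}$ is the sought morphism with $f_1|_X = f$. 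The fibre of $f$ over $c \in C$ is then $f_1^{-1}(c)\cap X = f_1^{-1}(c)\setminus(\bbar{X}\setminus X)$, an open subset of the projective variety $f_1^{-1}(c)$, hence quasiprojective. The main obstacle, and the sole place where $g \geq 1$ is needed, is this contraction step: verifying that $\til{f}$ is constant on every exceptional fibre of $\sigma$.
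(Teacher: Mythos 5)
Your argument has a genuine gap at its foundation: the claim that $\bbar{C}$ has genus $\geq 1$. Theorem \ref{qfibn} produces a \emph{logarithmic} irrational pencil, and the adjective matters: the hypothesis $n-m\geq 3$ only guarantees that the first Betti number $b_1(C)=g+g^\ast$ of the \emph{open} curve $C$ is at least $3$, which is perfectly compatible with $g(\bbar{C})=0$ (for instance $C=\mathbb{P}^1$ minus four or more points). This is not a fringe case; it is exactly the case the paper needs in its applications, where $\pi_1(C)$ is a nonabelian free group realized by a punctured rational curve (see the proofs of Theorem \ref{qpmain} and Proposition \ref{spheres}). When $\bbar{C}=\mathbb{P}^1$ the target does carry rational curves, the contraction step collapses, and indeed a rational map from a smooth projective variety to $\mathbb{P}^1$ need not be a morphism (a pencil of hyperplane sections already has base points). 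So the one place where you say the genus hypothesis is needed is precisely the place where it is unavailable, and the whole strategy of ``extend as a rational map, then kill the indeterminacy for free'' does not go through.

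The extension $f_1$ actually comes from Bauer's construction itself, not from general position of the target: the pencil is cut out by closed logarithmic $1$-forms living on $\bbar{X}$ (this is the content of Remark 2.3(1) of \cite{bauer}, which you cite but never use), and it is that construction which hands you a morphism on all of $\bbar{X}$. The paper accordingly treats the existence of $f_1$ as furnished by Bauer's proof and only argues the final sentence, which is the easy part you also handle correctly: the fibers of $f$ are the intersections with $X$ of the projective fibers of $f_1$, hence are quasiprojective. To repair your write-up you would either have to import the logarithmic-form argument from \cite{bauer} to rule out indeterminacy on the boundary divisor, or restrict your contraction argument to the subcase $g(\bbar{C})\geq 1$, which does not cover the situations in which the proposition is applied.
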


\begin{proof} Only the last statement (which is really obvious) is not explicitly mentioned in \cite{bauer}. However since we need it explicitly we say a couple of 
words here:

Note that the fibers of $f$ are intersections of fibers of $f_1$ with $X$. All fibers of $f_1$ 
are projective varieties as $f_1$ is algebraic. Hence the fibers of $f$ are quasiprojective.
\end{proof}

 The {\it logarithmic genus} $g^\ast$ of a curve $C$ is defined by the equality $b_1 (C) =
g +g^\ast$, where $g$ is the genus of a smooth completion of $C$. 

Let $X$ be a variety.
A subspace $V\, \subset\, H^1(X,\, {\mathbb C})$ is called \textit{isotropic} if the image
of $\bigwedge^2 V$ in $H^2(X,\, {\mathbb C})$ is zero \cite[p. 441]{bauer}.
A (complex linear) subspace $V\, \subset\, H^1(X,\, {\mathbb C})$ is called 
\textit{real} if $\overline{V}\,=\, V$.

We owe the comment below to the referee:

\begin{rmk} \label{ref} {\rm There is a one-to-one correspondence
between $\Rr-$linear subspaces of $H^1(X,\Rr)$ and real subspaces of $H^1(X,\C)$
in the above sense, that is, $\C-$linear subspaces $V$ such that $\overline{V}\,=\, V$.
The correspondence sends any $\Rr-$linear subspace $W \,\subset\, H^1(X,\,\Rr)$
to $$W \otimes_\Rr \C \,\subset\, H^1(X,\,\Rr)\otimes_\Rr \C\,=\, H^1(X,\,\C)\, .$$
This is the convention we follow.

We could have alternately defined a $\Rr$--linear subspace 
$V$ of $H^1(X,\C)$ to be \textit{real} if $\overline{V}\,=\, V$. 
Now
Theorem \ref{isotropic} below deals with maximal real isotropic subspaces $V$
of $H^1(X,C)$. If $V$ is a real isotropic subspace of $H^1(X,\C)$
in this sense, then $V + \sqrt{-1}V \subset H^1(X,\C)$ is also  isotropic.
Since  $V$ is maximal, it is equal to $V +
\sqrt{-1}V$. So a  maximal real isotropic subspaces $V$
 in this sense is automatically
a complex linear subspace of $H^1(X,\C)$. Thus the two definitions are essentially equivalent.
However ``dimension'' in Theorem \ref{isotropic} below and in
 \cite{bauer} means complex dimension.

The necessary and sufficient condition for $C$ to be complete in Theorem \ref{isotropic} below is slightly
misstated in \cite{bauer}.}
\end{rmk}

It is a standard fact that  the inclusion  $X\subset  \bbar{X}$
induces an injective map from     $H^1(\bbar{X}, \,\C)$ into $H^1(X,\, {\mathbb C})$. We identify $H^1(\bbar{X}, \,\C)$ with its image in
$H^1(X,\, {\mathbb C})$
in the following: 

\begin{theorem}[{\cite[Theorem 2.1]{bauer}, \cite[Theorem 2.11]{cat1}}]\label{isotropic}
Let $X$ be a smooth quasiprojective variety, and let $\bbar X$ denote a
smooth compactification such that $(\bbar{X} \setminus X) = D$ is a divisor with 
normal crossings. Every maximal real isotropic subspace $V\,\subset\,
H^1(X,\, \C)$  
of dimension $\geq 3$ 
 determines a unique 
logarithmic irrational pencil $f\,:\, X\,\Ra\,
C$  onto
a curve $C$ with logarithmic genus $g^\ast \geq 2$. The curve $C$ is complete if and only if $V$
 is a maximal
isotropic real subspace of $H^1(C, {\C})$, and so
$dim_{\C}(V) $ is equal to the genus of $C$.
Else $V = f^\ast (H^1 (C, \, \C))$.
\end{theorem}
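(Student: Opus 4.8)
The plan is to adapt the classical Castelnuovo--de Franchis method to logarithmic $1$-forms, following the approach of Catanese and Bauer. I work throughout on the compactification $\bbar X$ with its normal crossing divisor $D$, using Deligne's mixed Hodge structure on $H^1(X,\C)$ whose holomorphic part is $F^1 = H^0(\bbar X,\, \Omega^1_{\bbar X}(\log D))$, the space of closed logarithmic $1$-forms; the subspace $H^1(\bbar X,\C)\hookrightarrow H^1(X,\C)$ accounts for the classes without residues along $D$, while the residual part is of Hodge--Tate type and corresponds to the non-complete case.

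First I would use the reality of $V$ to extract holomorphic data. Let $\rho\colon H^1(X,\C)\to F^1$ denote the projection onto the holomorphic logarithmic $1$-forms, and set $V^{1,0} = \rho(V)$. Since $\overline V = V$, the kernel $V\cap\ker\rho$ is the complex conjugate of a subspace mapping isomorphically onto a subspace of $V^{1,0}$, so $\dim_\C V \le 2\dim_\C V^{1,0}$; hence the hypothesis $\dim_\C V\ge 3$ forces $\dim_\C V^{1,0}\ge 2$. This is exactly where the bound ``$\ge 3$'', rather than the ``$\ge 2$'' of the compact holomorphic theorem, is used. Moreover $V^{1,0}$ is isotropic: the top holomorphic component of $\alpha\wedge\beta$ for $\alpha,\beta\in V$ is $\rho(\alpha)\wedge\rho(\beta)$, which vanishes because the image of $\bigwedge^2 V$ in $H^2(X,\C)$ is zero and global logarithmic $2$-forms inject into $H^2(X,\C)$ (degeneration of the logarithmic de Rham spectral sequence).

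Next I would invoke the logarithmic Castelnuovo--de Franchis construction on two independent isotropic logarithmic forms $\omega_1,\omega_2\in V^{1,0}$. Their wedge vanishes as a form, so the meromorphic ratio $\omega_2/\omega_1$ has a pencil of level sets; Stein-factorizing the associated rational map produces a fibration $f_1\colon\bbar X\Ra\bbar C$ onto a smooth projective curve restricting to $f\colon X\Ra C$, with every form of $V^{1,0}$ a pullback from $C$. Since $f^\ast$ of any $1$-forms on a curve is automatically isotropic, $f^\ast$ of the logarithmic $1$-forms of $C$ contains the $(\ge 2)$-dimensional space $V^{1,0}$, forcing $C$ to carry at least two independent such forms and hence $g^\ast\ge 2$; this rules out the rational and elliptic bases.

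Finally I would settle the completeness dichotomy. By maximality and reality of $V$, the pullback $f^\ast H^1(C,\C)$ equals $V$ precisely when the classes of $V$ fail to extend across $D$, i.e.\ in the affine case; when instead $V\subset H^1(\bbar X,\C)$, every representing form extends, the fibers of $f$ are complete, $C = \bbar C$, and $V$ is a maximal real isotropic subspace of the pure weight-one space $H^1(C,\C)$, whence $\dim_\C V = g(C)$. The principal obstacle is the logarithmic Castelnuovo--de Franchis step itself: promoting the forms $\omega_1,\omega_2$ to an actual morphism requires controlling their residues along $D$ so that $\omega_2/\omega_1$ descends to a genuine map to a curve and the pencil extends to $\bbar X$ (Proposition \ref{qfibnprop}). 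The second delicate point is the uniqueness of the pencil and the exact equality $\dim_\C V = g(C)$; both rest on the maximality of $V$ together with a careful comparison of the mixed Hodge structures of $H^1(X)$ and $H^1(C)$ under $f^\ast$, and this Hodge-theoretic bookkeeping, rather than the construction of $f$, is where the real work lies.
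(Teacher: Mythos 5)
This theorem is not proved in the paper at all: it is quoted verbatim (with attribution) from Bauer and Catanese, and the paper's only original contribution here is Remark \ref{ref}, which fixes conventions about ``real'' subspaces and notes that the completeness criterion is slightly misstated in Bauer. So there is no in-paper proof to compare against; what you have written is a reconstruction of the cited proof, and as such it follows the right architecture. Your use of the Deligne splitting to show $\dim_\C V \le 2\dim_\C V^{1,0}$ (hence $\dim V\ge 3$ forces two independent logarithmic forms), the isotropy of $V^{1,0}$ via degeneration of the logarithmic de~Rham spectral sequence, and the logarithmic Castelnuovo--de Franchis construction of the pencil are exactly the steps in Bauer's Theorem 2.1 and Catanese's Theorem 2.11. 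You also correctly identify where the hypothesis ``$\ge 3$'' (rather than the ``$\ge 2$'' of the compact case) enters.

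The one place where your sketch is genuinely thin is the completeness dichotomy, which is precisely the point Remark \ref{ref} warns about. Your criterion ``$f^\ast H^1(C,\C)=V$ precisely when the classes of $V$ fail to extend across $D$'' is a heuristic, not the argument. The actual dichotomy runs through $H^2$ of the base: if $C$ is not complete then $H^2(C,\C)=0$, so \emph{all} of $H^1(C,\C)$ is isotropic; one then checks $V\subseteq f^\ast H^1(C,\C)$ (using that $V\subseteq V^{1,0}+\bbar{V^{1,0}}$ by reality, and that $f^\ast$ commutes with conjugation), and maximality forces equality, giving $\dim_\C V=b_1(C)$. If instead $C$ is complete, the fiber class obstructs isotropy of the full pullback, so a maximal isotropic subspace of $f^\ast H^1(C,\C)$ has dimension $g(C)$, whence $\dim_\C V=g(C)$. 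Spelling this out would also make precise your passing claim that \emph{every} class of $V$ (not merely every form of $V^{1,0}$) is pulled back from $C$ in the non-complete case. The extension of $f$ to $\bbar X\Ra\bbar C$ that you invoke at the end is Proposition \ref{qfibnprop} in the paper, so that step is legitimately outsourced.
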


We introduce some more notation  towards the final result of this subsection.
For $f\,:\, X\,\longrightarrow\, Y$ be  a fibration of quasiprojective varieties, $Sing (f)
\,\subset\, X$ will denote the set of critical points of $f$.  For any $y\,\in\, Y$, let $F_y
\,:=\,f^{-1}(y)$. Let $F_b$ be a regular fiber of $f$ and $\til{b} \in F_b$.  
  
Proposition \ref{qfibnes} below will use the following Lemma of Nori.

\begin{lemma}[{\cite[Lemma 1.5]{nori}, \cite[Proposition 3.1]{shimada}}]\label{nori}
Let $f: X\rightarrow Y$ be  a fibration of quasiprojective varieties
so that the regular fiber $F_b$  is connected. Let $\iota:F_b \longrightarrow X$ denote
the inclusion map. Let $$\Xi\,\subset\, Y$$ be a Zariski closed subset of codimension
greater than one such that for all $y\in Y\setminus \Xi$ we have $$F_y\setminus(F_y\bigcap
Sing(f))\,\neq \,\emptyset\, .$$
Then $f_\ast\,:\, \pi_1 (X, \til{b})\,\longrightarrow\,\pi_1(Y, b)$ is surjective,
and its  kernel is equal to the image of
$$\iota_*\,:\,\pi_1 (F_b, \til{b})\,\longrightarrow\, \pi_1 (X, \til{b})\, .$$
\end{lemma}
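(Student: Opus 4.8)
The plan is to prove exactness of the three-term sequence
$$\pi_1(F_b,\til{b})\;\stackrel{\iota_*}{\Ra}\;\pi_1(X,\til{b})\;\stackrel{f_*}{\Ra}\;\pi_1(Y, b)\;\Ra\;1$$
by combining the homotopy exact sequence of the locally trivial part of $f$ with a general-position argument that uses the codimension hypothesis on $\Xi$ to absorb the singular fibers. Two of the three containments are routine, and I would dispose of them first.

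Since $f\circ\iota$ is the constant map onto $b$, one has $f_*\circ\iota_*\,=\,1$, so $\mathrm{Im}(\iota_*)\,\subseteq\,\ker(f_*)$ with no further work. For surjectivity of $f_*$, I would let $\Sigma\,\subset\, Y$ denote the set of critical values of $f$, a proper Zariski closed subset; over $Y\setminus\Sigma$ Ehresmann's theorem makes $f$ a smooth fiber bundle with connected fibre $F_b$, so the homotopy exact sequence of this bundle gives that $f_*$ surjects onto $\pi_1(Y\setminus\Sigma)$. As $\Sigma$ and $f^{-1}(\Sigma)$ are complex subvarieties, hence of real codimension $\geq 2$, a generic loop can be pushed off them; thus the inclusions $Y\setminus\Sigma\,\hookrightarrow\, Y$ and $f^{-1}(Y\setminus\Sigma)\,\hookrightarrow\, X$ both induce surjections on $\pi_1$, and composing yields surjectivity of $f_*\,:\,\pi_1(X)\,\Ra\,\pi_1(Y)$.

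The heart of the argument is the reverse inclusion $\ker(f_*)\,\subseteq\,\mathrm{Im}(\iota_*)$, and this is the step I expect to be the main obstacle. Take a loop $\alpha$ in $X$ based at $\til{b}$ with $f_*[\alpha]\,=\,1$, and choose a null-homotopy $g\,:\,D^2\,\Ra\, Y$ of $f\circ\alpha$. Here I would first invoke general position: since $\Xi$ is Zariski closed of complex codimension $>1$, it has real codimension $\geq 4$, so after a small perturbation of $g$ (fixing $\partial D^2$) I may assume $g(D^2)\cap\Xi\,=\,\emptyset$, i.e. $g$ factors through $Y\setminus\Xi$. By hypothesis every fiber over $Y\setminus\Xi$ meets the smooth locus $X\setminus Sing(f)$, and over the regular-value locus $f$ is a genuine bundle; the content of Nori's argument is that these two facts together allow one to lift $g$ to $\til{g}\,:\,D^2\,\Ra\, X$ with $f\circ\til{g}\,=\,g$ and with $\til{g}|_{\partial D^2}$ homotopic to $\alpha$ up to a path lying in $F_b$. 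Such a $\til{g}$ exhibits $[\alpha]$ as a product of an element of $\mathrm{Im}(\iota_*)$ with a class that is null-homotopic in $X$, which is exactly what is needed.

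The genuine difficulty, as I see it, is producing the lift $\til{g}$ across the discriminant $\Sigma$, where $f$ is not a bundle: over codimension-one critical values the fibers degenerate, and one cannot simply invoke covering-space or bundle lifting. The codimension $\geq 2$ hypothesis on $\Xi$ is precisely what lets one arrange that the disk $g(D^2)$ only ever meets fibers still possessing a smooth point, so that sections of $f$ can be extended over the (real codimension two) discriminant part of $g(D^2)$ by moving into the smooth locus; and the final bookkeeping—ensuring the boundary lands in $F_b$ itself rather than a monodromy-translate of it—is the technical core where Nori's transversality estimates, and Shimada's reformulation in \cite{shimada}, carry the load. I would therefore organize the write-up so that all the analytic content is concentrated in this single lifting lemma over $Y\setminus\Xi$, with the two easy containments reduced to the standard codimension-two $\pi_1$-surjectivity facts.
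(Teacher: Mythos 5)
First, a point of comparison: the paper offers no proof of this lemma at all --- it is quoted, with citations, from Nori and Shimada --- so there is no internal argument to measure yours against. Judged on its own terms, your outline has the right architecture (it is essentially the shape of the argument in those references), and the two routine containments are fine: $f\circ\iota$ constant gives $\mathrm{Im}(\iota_*)\subseteq\ker(f_*)$, and surjectivity of $f_*$ follows from generic local triviality of $f$ over $Y\setminus\Sigma$ together with the real-codimension-two argument for $Y\setminus\Sigma\hookrightarrow Y$. One caveat there: since $f$ need not be proper on a quasiprojective $X$, Ehresmann's theorem does not apply directly and you should invoke Verdier's generic local triviality theorem (or Thom--Mather stratification theory) to get a locally trivial bundle over a Zariski-dense open subset of $Y$.

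The genuine gap is exactly where you locate it, and locating it does not close it. The containment $\ker(f_*)\subseteq\mathrm{Im}(\iota_*)$ is reduced in your write-up to the assertion that a null-homotopy $g\,:\,D^2\,\Ra\, Y$ of $f\circ\alpha$, pushed off $\Xi$, ``can be lifted'' to $X$ with controlled boundary; that assertion is the entire content of the lemma, and you replace it with an appeal to ``Nori's transversality estimates.'' The missing idea is concrete: for $y\in Y\setminus\Xi$ the fiber $F_y$ contains a point at which $f$ is a submersion, hence $f$ admits a local section through that point; these local sections are what allow a small loop in $Y$ encircling a codimension-one component of the discriminant $\Sigma$ to be lifted to a small loop in $X$ encircling the corresponding component of $f^{-1}(\Sigma)$, and this is what shows that the normal generators of $\ker\bigl(\pi_1(Y\setminus\Sigma)\to\pi_1(Y)\bigr)$ lift into $\ker\bigl(\pi_1(X\setminus f^{-1}(\Sigma))\to\pi_1(X)\bigr)$, the components of $\Sigma$ contained in $\Xi$ being harmless because they have complex codimension at least two and so do not affect $\pi_1$. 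A diagram chase comparing the homotopy exact sequence of the bundle over $Y\setminus\Sigma$ with the two surjections $\pi_1(X\setminus f^{-1}(\Sigma))\to\pi_1(X)$ and $\pi_1(Y\setminus\Sigma)\to\pi_1(Y)$ then delivers both the surjectivity and the identification of the kernel, and also handles the monodromy bookkeeping you worry about at the boundary. Without the local-section step your disk $\til{g}$ cannot be constructed across the discriminant, so as written the proposal is an outline of where a proof would live rather than a proof.
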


A large part of the proof of the following proposition was supplied by the referee.

\begin{prop}\label{qfibnes}
Let $X, C, f$ be as in Theorem \ref{qfibnprop}. Then there is an exact sequence
$$
1 \,\Ra\, H \,\Ra \,\pi_1(X) \,\Ra \,\pi_1^{orb}(C) \,\Ra \, 1
$$
with $H$ finitely generated, where $\pi_1^{orb}(C)$ denotes the orbifold fundamental group of some orbifold (with finitely many orbifold points)
whose underlying topological space is $C$.
\end{prop}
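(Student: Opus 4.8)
The plan is to obtain the asserted sequence by first removing a finite subset of $C$ so that Nori's Lemma (Lemma \ref{nori}) applies verbatim, and then re-inserting the removed fibers, the multiple ones among them being exactly what forces the orbifold structure on $C$. Since $f\,:\,X\,\Ra\, C$ is a fibration of a smooth quasiprojective variety onto a smooth quasiprojective curve, the critical values of $f$ form a finite set; let $\Sigma\,=\,\{y_1,\dots ,y_k\}\,\subset\, C$ be this set, put $C^{\circ}\,=\,C\setminus\Sigma$ and $X^{\circ}\,=\,f^{-1}(C^{\circ})$. Over $C^{\circ}$ every fiber is smooth, so $F_y\cap Sing(f)\,=\,\emptyset$ for all $y\,\in\, C^{\circ}$; as $\dim C^{\circ}\,=\,1$, the only Zariski closed subset of codimension greater than one is empty, so Lemma \ref{nori} applies to $f|_{X^{\circ}}$ with $\Xi\,=\,\emptyset$. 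This yields an exact sequence
$$1\,\Ra\, H_0\,\Ra\,\pi_1(X^{\circ})\,\Ra\,\pi_1(C^{\circ})\,\Ra\,1,$$
where $H_0$ is the image of $\iota_*\,:\,\pi_1(F_b)\,\Ra\,\pi_1(X^{\circ})$ for a regular fiber $F_b$.

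Next I would define the orbifold: at each $y_i$ over which the scheme-theoretic fiber $f^{*}y_i$ has multiplicity $m_i\,\geq\, 2$ (the gcd of the multiplicities of its components) I place a cone point of order $m_i$; the remaining (reduced) special fibers receive multiplicity $1$. This defines an orbifold with underlying space $C$ and finitely many orbifold points, and by definition $\pi_1^{orb}(C)\,=\,\pi_1(C^{\circ})/\langle\langle\,\gamma_i^{m_i}\,\rangle\rangle$, where $\gamma_i$ is a small loop around $y_i$. I then re-insert the fibers over $\Sigma$. Because $X\setminus X^{\circ}\,=\,f^{-1}(\Sigma)$ is a complex analytic subset of complex codimension one, hence real codimension two, the inclusion $X^{\circ}\,\hookrightarrow\, X$ induces a surjection $\pi_1(X^{\circ})\,\Ra\,\pi_1(X)$; composing with the maps above produces a surjection $\pi_1(X)\,\Ra\,\pi_1^{orb}(C)$, and, writing $H$ for its kernel,
$$1\,\Ra\, H\,\Ra\,\pi_1(X)\,\Ra\,\pi_1^{orb}(C)\,\Ra\,1$$
is exact. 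The content of the construction is a local computation near each special fiber via van Kampen: filling in a reduced fiber kills the meridian $\gamma_i$ (the multiplicity-one relation), while filling in a multiple fiber of multiplicity $m_i$ forces $\gamma_i^{m_i}$ into the image of the generic fiber group. The standard way to see the latter is the local model of a multiple fiber obtained from the degree-$m_i$ base change $t\,=\,s^{m_i}$ together with the resulting $\Z/m_i$ quotient description, which shows exactly that $\gamma_i^{m_i}$ lies in the image of $H_0$ and that no further relations are introduced.

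It remains to identify $H$ and prove it is finitely generated. The previous paragraph shows that, in $\pi_1(X)$, the normal subgroup generated by the images of the $\gamma_i^{m_i}$ already lies in the image of $\pi_1(F_b)$; consequently $H$ equals the image of $\iota_*\,:\,\pi_1(F_b)\,\Ra\,\pi_1(X)$. By Proposition \ref{qfibnprop} the regular fiber $F_b$ is quasiprojective, hence homotopy equivalent to a finite CW complex, so $\pi_1(F_b)$ is finitely generated; therefore its quotient $H$ is finitely generated, as required.

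I expect the main obstacle to be the local analysis at the multiple fibers: one must verify precisely that a multiple fiber of multiplicity $m_i$ contributes the single orbifold relation $\gamma_i^{m_i}\,=\,1$ in the quotient $\pi_1(X)/H$ and nothing more, and that reduced (possibly singular) fibers contribute no orbifold point. Care is also needed because $C$ may be noncomplete, so that $\pi_1(C^{\circ})$ is free and $\pi_1^{orb}(C)$ is a free product of copies of $\Z$ and finite cyclic groups; handling the complete and noncomplete cases uniformly, and checking the connectedness of the relevant fibers via the Stein factorization of the proper map $f_1\,:\,\bbar X\,\Ra\,\bbar C$, is where the bookkeeping concentrates.
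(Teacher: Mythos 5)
Your argument is correct, and its computational core is the same as the paper's: Nori's Lemma over the locus of regular values, the presentation of $\pi_1(X)$ as the quotient of $\pi_1(X^{\circ})$ by the meridians $\sigma_K$ of the irreducible components of the special fibers, and the Bezout observation that the normal closure of $\{\gamma_i^{n_K}\}_K$ equals that of $\gamma_i^{m_i}$ with $m_i=\gcd_K n_K$. The organizational difference is real, though: you run this computation directly on $f\colon X\to C$ and read off the orbifold relations $\gamma_i^{m_i}=1$ from the gcd, whereas the paper first passes to a finite orbifold cover $C_1$ of the orbifold $C_o$ on which the cone points disappear (this uses that $C$ is hyperbolic, so that $C_o$ is a good orbifold admitting a finite manifold cover), lifts $f$ to $f_1\colon X_1\to C_1$ where every special fiber has gcd-multiplicity one, and only then performs the meridian/Bezout computation to obtain an honest exact sequence upstairs. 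Your route is more uniform, handles the complete and non-complete cases identically, and does not need hyperbolicity of $C$; the paper's route buys the extra conclusion, recorded in the remark following the proposition and exploited later (e.g.\ in Theorem \ref{qpmain}), that there is a finite manifold cover $X_1$ of $X$ fibering over a genuine curve with finitely generated kernel. Two small points of precision: a fiber whose component multiplicities have gcd equal to one need not be reduced, so your parenthetical ``reduced'' should be replaced by the gcd condition you actually use; and the claim that ``$\gamma_i^{m_i}$ lies in the image of $H_0$'' is cleaner when phrased as the statement that the image in $\pi_1(C^{\circ})$ of the normal closure of the $\sigma_K$ is exactly the normal closure of the $\gamma_i^{m_i}$, whence $\pi_1(X)/H\cong \pi_1(C^{\circ})/\langle\langle \gamma_i^{m_i}\rangle\rangle=\pi_1^{orb}(C_o)$. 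Connectedness of the general fiber is part of the hypotheses inherited from Theorem \ref{qfibn}, so no Stein factorization argument is needed.
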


\begin{proof}
We apply Lemma \ref{nori} with $Y\,=\, C$ and $\Xi \,=\, \emptyset$. 
If $F_y \subset Sing(f)$ for some $y$, then $F_y$ is a multiple fiber.  If every 
singular fiber of $f$ has
an irreducible component with multiplicity one,
 Lemma \ref{nori} directly gives an exact sequence $$1 
\,\Ra\,\iota_\ast (\pi_1 (F_b, \til{b}))\,\Ra\, \pi_1 (X, \til{b}) \,\Ra \,\pi_1(C) 
\,\Ra \, 1\, .$$ Since $\pi_1 (F_b, \til{b})$ is finitely generated by the last 
statement of Proposition \ref{qfibnprop}, so is $H\,=\, \iota_\ast (\pi_1 (F_b, 
\til{b}))\,$ and we are done in this case.

Else suppose there are finitely many points $b_1\, , \cdots\, , b_k$ such that the fibers 
$F_i \,= \,F_{b_i}$ are the multiple fibers of the fibration. Let $Z \,=\, \{ b_1\, , \cdots
\, , b_k \}$ denote the critical set in $C$. 
Suppose that
$F_i = F_{b_i}$ has multiplicity $n_i$, where we define the multiplicity of $F_i$ to be
 the gcd
of the multiplicities of the irreducible components of $F_i$.
We equip $C$ 
with an orbifold structure $C_o$ with orbifold points $b_i$ of order $n_i$.
Since 
$C$ is hyperbolic, so is $C_o$ (since its orbifold Euler characteristic must be 
negative). Hence there exists a finite orbifold-cover $C_1$ of $C_o$ such that $C_1$ 
has no orbifold points \cite{scott1}. This $C_1$ may be thought of as a branched cover of 
$C$ with $n_i$--fold branching at $b_i$. The fibration $f:X \Ra C$ then lifts to a 
fibration $f_1:X_1 \Ra C_1$ where $X_1$ is a manifold cover of $X$ (since $F_i$ is a 
multiple fiber with  multiplicity $n_i$).
Further the multiplicity of each singular fiber of $f_1$ (in the above sense) is one.
It suffices to show therefore that there is an exact sequence $$1 
\,\Ra\,H\,\Ra\, \pi_1 (X_1, \til{b}) \,\stackrel{f_{1\ast}}\Ra \,\pi_1(C_1) \,\Ra \, 1$$ with $H$ 
finitely generated. 

Two things need to be checked now:

\begin{enumerate}
\item $f_{1\ast}$ is surjective.
\item The kernel of $f_{1\ast}$ is the image $H \subset \pi_1(X_1)$
of the fundamental group of a general fiber $F_b$.
\end{enumerate}

Given that $C_1$ has no orbifold points, any (based) loop $\sigma$ can be homotoped slightly to miss the
 singular
set $Z_1$ in $C_1$ without changing its homotopy class.
 Since $f_1$ is a fibration away from the singular set,  $\sigma$ can now be lifted to a (based) loop
$\sigma_1 \subset X_1$ with $f_{1\ast}([\sigma_1])=[\sigma]$ and we conclude that 
$f_{1\ast}$ is surjective.

Let $U = C_1 \setminus Z_1$ and $X_U = f_1^{-1}(U)$. Then $f_1\,:\, X_U\,\longrightarrow
\,U$ is a smooth fibration and hence $\pi_1(X_U)/\pi_1(F_b) = \pi_1(U)$. Next
$\pi_1(X_1)$ is the quotient  of $\pi_1(X_U)$
by the normal subgroup generated by one loop $\sigma_K$ around each
irreducible component $K$ of $X \setminus X_U$. Hence $\pi_1(X_1)/H$ is the quotient of $\pi_1(U)$ by
the normal subgroup generated by $f_{1\ast}([\sigma_K])$. 
If $K \subset F_i$ then $f_{1\ast}([\sigma_K]) = \alpha_i^{n_K}$, where $\alpha_i$ is a small loop around 
the critical point $b_i\in Z_1$ and  $n_K$ is the multiplicity of $K$. If $K_{i1}, \cdots, K_{il}$ are the
irreducible components of $F_i$ with multiplicities $n_{i1}, \cdots, n_{il}$ respectively, then 
$gcd(n_{i1}, \cdots, n_{il})=1$ and hence there exist integers $c_{i1}, \cdots, c_{il}$
such that $\sum_{j=1}^l c_{ij} n_{ij} =1$ and hence $[\alpha_i]$ belongs to the normal subgroup generated by $f_{1\ast}([\sigma_K])$'s.
It follows that the quotient of $\pi_1(U)$ by
the normal subgroup generated by $f_{1\ast}([\sigma_K])$'s is precisely $\pi_1(C_1)$. This
proves the proposition.
\end{proof}

\begin{rmk}
We emphasize that in the proof of Proposition \ref{qfibnes}, we have actually shown the existence of 
a finite manifold cover $X_1$ of $X$ satisfying the exact sequence
$$1 \,\Ra\,H\,\Ra\, \pi_1 (X_1, \til{b}) \,\Ra \,\pi_1(C_1) \,\Ra \, 1$$ with $H$ finitely generated.
\end{rmk}

\section{Quasiprojective three-manifold groups}

In this section, we combine Theorem \ref{large} with Theorem \ref{qfibn} to 
completely characterize quasiprojective $3$-manifold groups.

We shall use the following restriction on quasiprojective groups
due to Arapura and Nori which says that solvable quasiprojective groups
are virtually nilpotent.

\begin{theorem}[\cite{an}]\label{sfas}
Let $N$ be a closed $3$-manifold such that
$\pi_1(N)$ is a quasiprojective group. Then $N$ is not a Sol manifold.
\end{theorem}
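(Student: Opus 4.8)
The plan is to argue by contradiction, leveraging the Arapura--Nori restriction stated in the paragraph preceding the theorem: every solvable quasiprojective group is virtually nilpotent. So I would assume that $N$ is a closed Sol manifold whose fundamental group $\pi_1(N)$ happens to be quasiprojective, and derive a contradiction by exhibiting $\pi_1(N)$ as a solvable group that fails to be virtually nilpotent.

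First I would recall the algebraic structure of Sol-manifold groups. If $N$ is a closed $3$-manifold modelled on $Sol$, then $\pi_1(N)$ is virtually a semidirect product $\Z^2 \rtimes_A \Z$, where the monodromy $A \in SL_2(\Z)$ is hyperbolic (Anosov), i.e. $|\operatorname{tr}(A)| > 2$; this is the standard description of $Sol$-geometry groups as (virtual) torus-bundle-over-circle groups with Anosov gluing, and such a group is polycyclic, hence solvable. In particular $\pi_1(N)$ itself is solvable, being virtually polycyclic and finitely generated.

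The key step is then to observe that $\Z^2 \rtimes_A \Z$ with $A$ hyperbolic is \emph{not} virtually nilpotent. The cleanest way to see this is via growth: by the Milnor--Wolf theorem a finitely generated solvable group has polynomial growth if and only if it is virtually nilpotent, and the Anosov action of $A$ forces $\Z^2 \rtimes_A \Z$ to have exponential growth (the eigenvalues of $A$ are real and off the unit circle, so words in the stable/unstable directions proliferate exponentially). Since passing to a finite-index subgroup changes neither the growth type nor the property of being virtually nilpotent, the full group $\pi_1(N)$ inherits exponential growth and is therefore not virtually nilpotent.

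Combining these observations finishes the argument: if $\pi_1(N)$ were quasiprojective, then being solvable it would be virtually nilpotent by Arapura--Nori, contradicting the exponential growth established above. Hence no closed Sol manifold can have quasiprojective fundamental group. The only genuinely substantive point, and the one I would state with a reference rather than grind out, is the non-nilpotence of the Anosov semidirect product; everything else is a direct invocation of the cited structural results.
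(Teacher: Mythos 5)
Your argument is correct and is exactly the route the paper intends: the paper gives no explicit proof of this theorem, merely attributing it to Arapura--Nori after remarking that solvable quasiprojective groups are virtually nilpotent, and your proposal fills in the standard details (Sol groups are virtually $\Z^2\rtimes_A\Z$ with $A$ Anosov, hence polycyclic and of exponential growth, hence not virtually nilpotent by Milnor--Wolf). No gaps; this is the same approach, just written out.
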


\begin{theorem} 
\label{qpmain}
Let $N$ be a closed $3$-manifold, such that
$\pi_1(N)$ is a quasiprojective group. Then $N$ is either Seifert-fibered or
$N$ is finitely covered by $\#_m S^2 \times S^1$.
\end{theorem}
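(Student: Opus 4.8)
The plan is to dichotomize on whether $\pi_1(N)$ is large. If $\pi_1(N)$ is \emph{not} large, then Corollary \ref{largecor} forces $N$ to be Seifert-fibered or a Sol manifold, the only exception being the non-large free product $(\Z/2\Z)\ast(\Z/2\Z)$, which by Theorem \ref{large} is realized solely by $\mathbb{RP}^3 \# \mathbb{RP}^3$. Theorem \ref{sfas} rules out the Sol case (since $\pi_1(N)$ is quasiprojective), while $\mathbb{RP}^3 \# \mathbb{RP}^3$ is doubly covered by $S^2\times S^1 = \#_1 S^2\times S^1$. So the not-large case already yields one of the two desired conclusions.

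The substance is the case where $\pi_1(N)$ is large. Here I would first produce a finite-index subgroup $S \leq \pi_1(N)$ that surjects onto $F_3$ and simultaneously corresponds to an \emph{orientable} finite cover $\til N$ of $N$; both properties survive passage to further finite-index subgroups, so this bookkeeping is harmless. Writing $\pi_1(N) = \pi_1(X)$ for a smooth quasiprojective $X$, the subgroup $S$ is realized by a finite topological (étale) cover $X'$ of $X$, which is again smooth quasiprojective; thus $S = \pi_1(X')$ is quasiprojective. Since $S$ surjects onto $F_3$ (presented with $3$ generators and $0$ relations, so $n-m=3$), Theorem \ref{qfibn} together with Propositions \ref{qfibnprop} and \ref{qfibnes} furnishes a quasiprojective curve $C$ with $b_1(C)\geq 3$ and an exact sequence $1 \Ra H \Ra S \Ra \pi_1^{orb}(C) \Ra 1$ with $H$ finitely generated. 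Set $Q = \pi_1^{orb}(C)$; since $b_1(C)\geq 3$ the orbifold is hyperbolic, so $Q$ is infinite and not virtually cyclic. Moreover $Q$ is virtually a closed surface group of genus $\geq 2$ when $C$ is complete, and virtually a non-abelian free group when $C$ is non-complete.

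Now I would analyze this sequence, using that $S = \pi_1(\til N)$ for the closed orientable $3$-manifold $\til N$. If $H$ is infinite, Proposition \ref{les} applies: its alternative (2) would make $Q$ virtually cyclic, which is false, so alternative (1) holds and $\til N$ is Seifert-fibered (and aspherical, since the base orbifold group $Q$ is infinite). If instead $H$ is finite, then $S$ is commensurable with $Q$; when $C$ is complete this makes $S$ virtually a $PD_2$ group, but $S$ is then one-ended, forcing $\til N$ to be irreducible and aspherical, hence a $PD_3$ group — a contradiction of virtual cohomological dimension, so this subcase cannot occur. When $C$ is non-complete, $Q$ and hence $S$ is virtually free, so $\til N$ has virtually free fundamental group. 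Thus in every surviving case $\til N$ is either aspherical Seifert-fibered or has virtually free $\pi_1$.

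Finally I would descend to $N$. A closed orientable $3$-manifold with virtually free fundamental group is finitely covered by some $\#_m S^2\times S^1$ (the free cover being a connected sum of copies of $S^2\times S^1$ by the sphere theorem); applying this to $\til N$ and composing covers shows $N$ is finitely covered by $\#_m S^2\times S^1$. In the aspherical-Seifert case, $N$ is itself closed aspherical and finitely covered by a Seifert-fibered space, hence Seifert-fibered by the recognition of Seifert fibrations from finite covers (a consequence of Geometrization and the Seifert fibered space conjecture). I expect the main obstacle to be precisely this descent step together with the commensurability arguments: one must argue cleanly that being finitely covered by an aspherical Seifert manifold forces $N$ itself to be Seifert-fibered, and that the virtual-$PD_2$/virtual-$PD_3$ clash genuinely excludes the complete-curve, finite-$H$ subcase. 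The bookkeeping ensuring that $S$ is at once quasiprojective, orientation-preserving, and still large enough to feed Theorem \ref{qfibn} is the other place where care is needed.
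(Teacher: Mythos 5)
Your proof is correct and follows essentially the same route as the paper's: the largeness dichotomy with Sol excluded by Theorem \ref{sfas}, Bauer's theorem applied to a finite-index subgroup surjecting onto $F_3$, the resulting exact sequence over the base curve analyzed via Proposition \ref{les}, and the Grushko/sphere-theorem argument in the virtually free case. You additionally spell out details the paper leaves implicit --- the orientability bookkeeping for the intermediate cover, the exclusion of the complete-curve, finite-kernel subcase via the clash between virtual $PD_2$ and $PD_3$, and the descent of the Seifert-fibered property from a finite cover to $N$ itself --- all of which are correct.
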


\begin{proof} 
By Theorem \ref{sfas} we can exclude the case where $N$ is a Sol manifold. 
Hence it follows that if $\pi_1(N)$ is not large,
then, by Corollary \ref{largecor}, the manifold $N$ is Seifert-fibered.

Next suppose $\pi_1(N)$ is large. Then there exists a finite index subgroup $G$ of $\pi_1(N)$ such that
$G$ admits a surjection onto the free group $F_3$.

Since $\pi_1(N)$ is quasiprojective, so is $G$. Let $X$ be a smooth
quasiprojective variety with fundamental group $G$. By Theorem \ref{qfibn}, there exists
a logarithmic pencil $f$ (with connected fibers) of $X$ over a
quasiprojective curve $C$ with first Betti number greater than two. 
By passing to a finite sheeted (orbifold) cover of the base if necessary, we can assume without loss of generality that $f$ has no multiple fibers.

By Proposition \ref{qfibnprop}, the generic fiber $F$
is quasiprojective and hence has finitely generated fundamental group. Let $H$ denote
the image of $\pi_1(F)$ in $\pi_1(X)$.
Now we have an exact sequence
$$1 \,\Ra\, H \,\Ra\, \pi_1(X)\,\Ra\, \pi_1(C) \,\Ra\, 1\, .$$ 
If $C$ is closed, it follows from Proposition \ref{les} that $N$ is Seifert fibered. If $H$ is infinite cyclic (or even virtually so), then also  $N$ is 
Seifert fibered.

Else $C$ is quasiprojective non-compact and $H$ is not infinite cyclic. Hence by Proposition \ref{les} again,
the subgroup $H$ is finite and $G$ is virtually free. By Grushko's
theorem, \cite[p. 25, Theorem 3.4]{hempel}, the manifold $N$ is finitely covered
by a connected sum $\#_m S^2 \times S^1$.
\end{proof}

\begin{prop} 
\label{qpmainboundary}
Let $N$ be a $3$-manifold with at least one boundary component of
positive genus. Assume that $\pi_1(N)$ is an infinite quasiprojective group.
Then $\pi_1(N)$ is either virtually free or virtually of the form
$\Z\times F_n$ ($n\geq 1$) or virtually a surface group.
\end{prop}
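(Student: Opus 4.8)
The plan is to mirror the proof of Theorem \ref{qpmain}, exploiting that all three target properties (virtually free, virtually $\Z\times F_n$, virtually a surface group) are invariants of commensurability. Since finite-index subgroups of quasiprojective groups are again quasiprojective, I may replace $\pi_1(N)$ by any finite-index subgroup; passing to the orientation double cover, I first reduce to the case that $N$ is orientable. I then split according to whether $\pi_1(N)$ is large.

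\emph{The non-large case.} Because a nontrivial free product one of whose factors has order $>2$ is large, and the only non-large nontrivial free product $(\Z/2\Z)*(\Z/2\Z)$ is realized solely by a closed manifold (Theorem \ref{large}), a non-large $N$ with nonempty boundary must be prime, hence, being orientable with boundary, irreducible. The contrapositive of Theorem \ref{large}(1) then forces $N$ to be an $I$-bundle over a surface $S$ with $\chi(S)\geq 0$, so that $\pi_1(N)=\pi_1(S)$. Discarding the cases in which $\pi_1(S)$ is finite (excluded since $\pi_1(N)$ is infinite), the surface $S$ is a torus, a Klein bottle, an annulus, or a M\"obius band, whence $\pi_1(N)$ is $\Z^2$, the Klein bottle group, or $\Z$, each of which is virtually a surface group or virtually free.

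\emph{The large case.} I pass to a finite-index subgroup surjecting onto $F_3$, realize it as $\pi_1(X)$ for a smooth quasiprojective $X$, and apply Bauer's Theorem \ref{qfibn} (with target $F_3$, for which $n-m=3$) to obtain a logarithmic irrational pencil $f\,:\,X\,\Ra\,C$ with $b_1(C)\geq 3$. Killing multiple fibers by a further finite cover as in Proposition \ref{qfibnes}, I get an exact sequence $1\,\Ra\,H\,\Ra\,\pi_1(M)\,\Ra\,\pi_1(C_1)\,\Ra\,1$ with $H$ finitely generated, where $M$ is an orientable, bounded finite cover of $N$ and $C_1$ is a hyperbolic curve; in particular $b_1(\pi_1(C_1))=b_1(C_1)\geq b_1(C)\geq 3$, so $\pi_1(C_1)$ is infinite and \emph{not} virtually cyclic. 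If $H$ is infinite, Proposition \ref{les} applies, and its alternative (2) is impossible (it would force $\pi_1(C_1)$ virtually cyclic); hence alternative (1) holds, $H\cong\Z$ and $M$ is Seifert-fibered. Since $M$ has nonempty boundary, the base $2$-orbifold has boundary and so has virtually free orbifold fundamental group, and the central $\Z$-extension makes $\pi_1(M)$ virtually $\Z\times F_n$. If instead $H$ is finite, then $\pi_1(M)$ is an extension of $\pi_1(C_1)$ by a finite group: when $C_1$ is affine, $\pi_1(C_1)$ is free and $\pi_1(M)$ is virtually free; when $C_1$ is complete, $\pi_1(C_1)$ is a surface group and $\pi_1(M)$ is virtually a surface group. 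In each case $\pi_1(M)$, and therefore $\pi_1(N)$, has the asserted form.

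The main obstacle will be the bookkeeping in the finite-$H$ subcases. In particular, deducing that a finite-by-surface-group extension is virtually a surface group cannot proceed by splitting the extension (surface groups are not projective), so I expect to invoke residual finiteness of $3$-manifold groups to produce a finite-index subgroup meeting the finite kernel trivially, which then embeds with finite index into $\pi_1(C_1)$. Verifying that the Seifert alternative yields precisely a group that is virtually $\Z\times F_n$ (which relies on the base orbifold being bounded) and ruling out alternative (2) of Proposition \ref{les} via the Betti-number bound $b_1(C_1)\geq 3$ are the remaining delicate points.
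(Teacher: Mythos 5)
Your proposal is correct and follows essentially the same route as the paper: the dichotomy from Theorem \ref{large}(1) ($I$-bundle over a surface of non-negative Euler characteristic versus large), then Bauer's theorem and the exact sequence analyzed via Proposition \ref{les} in the large case. You merely make explicit several points the paper leaves implicit (the reduction to the orientable, prime case; ruling out alternative (2) of Proposition \ref{les} via $b_1(C)\geq 3$; and using residual finiteness to handle the finite-kernel case), all of which are sound.
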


\begin{proof} By Theorem \ref{large}(1), either $N$ is an $I$--bundle
over a surface of non-negative Euler
characteristic or it is large. If $N$ is an $I$--bundle over a surface of
non-negative Euler characteristic, then $\pi_1(N)$ is either 
$\Z$ or virtually $\Z\oplus \Z$.

Else, by the same argument as in the proof of
Theorem \ref{qpmain}, we have an exact 
sequence $$1\,\Ra\, H\,\Ra\, \pi_1(X)\,\Ra\, \pi_1(C)\,\Ra\,1$$ with $H$ either
$\Z$ or finite, and $C$ a (possibly noncompact) surface. If $H$ is finite,
then $\pi_1(N)$ is either virtually free or virtually a surface group.

If $H$ is $\Z$, then $N$ is Seifert-fibered with base a compact orbifold
surface with boundary. Consequently, $\pi_1(N)$ is either virtually cyclic
or virtually of the form $\Z\times F_n$ with $n\,\geq\, 1$.
\end{proof}

Combining Theorem \ref{qpmain} and Proposition \ref{qpmainboundary} we have
the following:
 
\begin{theorem} 
\label{qpcomb}
Let $N$ be a compact $3$-manifold (with or without boundary) such that
$\pi_1(N)$ is a quasiprojective group. One of the following is true:
\begin{enumerate}
\item $N$ is closed Seifert-fibered,

\item $\pi_1(N)$ is virtually free,

\item $\pi_1(N)$ is virtually of the form $\Z\times
F_n$ with $n\geq 1$,

\item $\pi_1(N)$ is virtually a surface group.
\end{enumerate}
\end{theorem}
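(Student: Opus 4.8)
The plan is to deduce Theorem \ref{qpcomb} as a direct corollary of the two results that precede it, namely Theorem \ref{qpmain} (the closed case) and Proposition \ref{qpmainboundary} (the case with positive-genus boundary). Since the theorem statement simply says ``one of the following is true,'' I would organize the proof as a case analysis on the boundary of $N$. First I would dispose of the degenerate possibilities: if $\pi_1(N)$ is finite, then it is trivially virtually free (indeed virtually trivial), so case (2) holds; thus I may assume $\pi_1(N)$ is infinite for the remainder. Recall also the standing convention from the introduction that $3$-manifolds have no spherical boundary components, so any boundary torus or higher-genus surface is the relevant kind of boundary.

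Next I would split into the boundaryless and boundary cases. If $N$ is closed, Theorem \ref{qpmain} applies directly and tells us that $N$ is either Seifert-fibered---giving case (1)---or $N$ is finitely covered by a connected sum $\#_m S^2 \times S^1$. In the latter situation $\pi_1(\#_m S^2 \times S^1)$ is a free group of rank $m$, so $\pi_1(N)$ contains a finite-index free subgroup and is therefore virtually free, landing in case (2). If instead $N$ has nonempty boundary, then (since spherical components are excluded) every boundary component is a torus or a surface of genus at least $1$, so $N$ has at least one boundary component of positive genus and Proposition \ref{qpmainboundary} applies. That proposition gives exactly three outcomes---virtually free (case (2)), virtually of the form $\Z\times F_n$ with $n\geq 1$ (case (3)), or virtually a surface group (case (4))---which together with case (1) exhausts the statement.

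One point I would be careful about is the interface between the two hypotheses. Proposition \ref{qpmainboundary} is stated for boundary components of \emph{positive genus}, whereas the boundary of a general compact $3$-manifold (with no spherical components) consists of tori and higher-genus surfaces; I should confirm that a torus counts as ``positive genus'' in the convention being used, so that the boundary case is genuinely covered. If the intended reading excludes the pure-torus-boundary case from Proposition \ref{qpmainboundary}, I would need a short supplementary argument. However, a toral boundary component already forces useful structure: a compact orientable irreducible $3$-manifold with toral boundary is covered by Theorem \ref{large}(1) unless it is an $I$-bundle over a non-negative Euler characteristic surface, and the argument of Theorem \ref{qpmain} then applies verbatim to the resulting exact sequence $1\to H\to\pi_1(X)\to\pi_1(C)\to 1$, yielding the same trichotomy. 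So even in the worst case the conclusion follows by re-running the established machinery.

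The main obstacle, then, is not any deep new idea---the heavy lifting has already been done in Theorem \ref{qpmain} and Proposition \ref{qpmainboundary}---but rather the bookkeeping of ensuring the case division is genuinely exhaustive and that the boundary hypotheses match. In particular I would take care that the possibility of $N$ being disconnected or having \emph{mixed} boundary (some toral, some higher-genus) is handled, that the finite-index subgroups produced in each branch are compatible with the ``virtually'' qualifier in the conclusion, and that the closed Seifert-fibered conclusion of case (1) is correctly distinguished from the Seifert-fibered-with-boundary situations, which are absorbed into cases (3) and (4). Once these alignments are verified, the proof is essentially a one-line appeal to the two preceding results, and I would present it as such.
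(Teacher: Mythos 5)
Your proposal is correct and matches the paper exactly: the paper gives no separate proof, simply stating the theorem as the combination of Theorem \ref{qpmain} (closed case) and Proposition \ref{qpmainboundary} (positive-genus boundary case). Your extra bookkeeping (finite groups are virtually free, a torus has genus $1>0$ so toral boundary is covered by Proposition \ref{qpmainboundary}) is sound and merely makes explicit what the paper leaves implicit.
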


\subsection{Refinements and consequences}\label{realzn}

\begin{rmk} \label{finer} The proof of Theorem \ref{qpcomb} gives us a bit 
more. A standing assumption in this section is that $N$ is a compact 
$3$-manifold (with or without boundary) and $\pi_1(N)$ is quasiprojective.

\noindent {\bf Case 1:~} $N$ is closed prime. Then Theorem \ref{qpmain} 
forces $N$ to be Seifert-fibered. \\

\noindent {\bf Case 2:~} $N$ is closed but not prime. Then from Theorem 
\ref{qpmain} the fundamental group $\pi_1(N)$ is virtually free and hence 
by Theorem \ref{vfree}, $\pi_1(N)$ is the fundamental group of a graph of 
groups with edge and vertex groups finite.  Hence
in the prime decomposition of $N$,  each prime component of 
$N$ must have fundamental group that has virtual cohomological dimension  either zero, in which case it is finite; or else 
virtual cohomological dimension  one, in which case it is virtually cyclic. By the classification
of such $3$-manifold groups (see \cite[Theorems 1.1, 1.12]{afw},
\cite[Theorem 9.13]{hempel}), $\pi_1(N)$ 
is of the form $G_1 * G_2 * \cdots * G_k$, where each $G_i$ is either the 
fundamental group of a spherical $3$-manifold or $\Z$ or $\Z\times(\Z/2\Z)$. \\

\noindent {\bf Case 3:~} $N$ is an $I$--bundle over a surface of 
non-negative Euler characteristic. Then $\pi_1(N)$ is either $\Z$ or 
$\Z\oplus \Z$ or the fundamental group of a Klein bottle. It turns out 
(see below) that all these three groups are quasiprojective.\\

\noindent {\bf Case 4:~} $N$ has a boundary component of positive genus and 
$\pi_1(N)$ contains an infinite cyclic normal subgroup. Then by Proposition
\ref{qpmainboundary}, the manifold $N$ is Seifert-fibered with base a compact orbifold 
surface with boundary. In this case a subgroup $G$ of index at most $2$ in 
$\pi_1(N)$ (if $N$ is non-orientable) or one, i.e., $\pi_1(N)$ itself (if 
$N$ is orientable) contains an infinite cyclic central subgroup $\langle t 
\rangle$ such that the quotient $G/\langle t \rangle$ is a free product of 
cyclic groups (finite or infinite) \cite[p. 118]{hempel}.\\

\noindent {\bf Case 5:} $N$ has a boundary component of positive genus and
$\pi_1(N)$ does not contain an infinite cyclic normal subgroup. Then 
by Proposition \ref{qpmainboundary},
\begin{enumerate}
\item either $\pi_1(N)$ is virtually a surface group in which case $N$ is an
$I$--bundle over a surface \cite[Theorem 13.6]{hempel},
\item or after compressing
the boundary as far as possible, $N \,=\, M \# H$, where $H$ is a (possibly
non-orientable) handlebody and hence $\pi_1(H)$ is free, and $M$ is a
closed manifold covered by Case 2.
\end{enumerate}
\end{rmk}

We now demonstrate the converse to Theorem \ref{qpcomb} by describing
examples of smooth quasiprojective varieties that realize the groups occurring in
Remark \ref{finer} as their fundamental groups.
To do this we shall restrict ourselves to {\em orientable}
compact $3$-manifolds with or without boundary.

We start with a lemma that is  well-known to experts. We  provide a proof for completeness (see
\cite[Section 5.3]{bm-kahler} for a closely related construction).

\begin{lemma} Let $X$ be a smooth complex quasiprojective variety, and let $G$ be a
finite group acting by automorphisms on $X$. Then the orbifold
fundamental group of $X/G$ is quasiprojective. \label{qnt}
\end{lemma}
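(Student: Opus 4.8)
\emph{Proof plan.} The plan is to realize the orbifold fundamental group $\pi_1^{orb}(X/G)$ as the genuine fundamental group of a smooth quasiprojective variety, by mixing $X$ with a finite-dimensional algebraic substitute for the universal free $G$-space. Recall that for a finite group $G$ acting on a connected space $X$ the quotient orbifold (stack) $[X/G]$ satisfies $\pi_1^{orb}(X/G)\,\cong\,\pi_1(X\times_G EG)$, and this group sits in a short exact sequence
\[
1 \,\Ra\, \pi_1(X) \,\Ra\, \pi_1^{orb}(X/G) \,\Ra\, G \,\Ra\, 1\, .
\]
So it suffices to exhibit a smooth quasiprojective variety whose fundamental group is $\pi_1(X\times_G EG)$. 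First I would replace $EG$ by a highly connected algebraic approximation carrying a free $G$-action.

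To build this approximation, choose a faithful linear representation $G\,\hookrightarrow\, GL(V)$ (possible since $G$ is finite), let $G$ act diagonally on $V^{\oplus N}$, and set
\[
Y \,=\, V^{\oplus N}\,\setminus\,\bigcup_{g\neq e}\,(V^{\oplus N})^g\, ,
\]
the open locus on which $G$ acts freely. Each fixed locus $(V^{\oplus N})^g=(V^g)^{\oplus N}$ has complex codimension $\geq N$ in $V^{\oplus N}$, because faithfulness forces $V^g\subsetneq V$; hence the removed closed subvariety has codimension $\geq N$. Since the complement of a closed subvariety of codimension $c$ in affine space is $(2c-2)$-connected, the variety $Y$ is as highly connected as we please once $N$ is large, in particular it is $2$-connected (and simply connected). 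By construction $G$ acts freely, by automorphisms, on the smooth quasiprojective variety $Y$.

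Now $G$ acts freely and diagonally by automorphisms on $X\times Y$; put $W\,=\,(X\times Y)/G$. Because every orbit is finite and any finite subset of a quasiprojective variety lies in an affine open, the geometric quotient $W$ exists as a quasiprojective variety, and it is smooth since the action is free (so $X\times Y\,\Ra\, W$ is an étale $G$-cover). From this covering together with $\pi_1(Y)=1$ we obtain exactly
\[
1 \,\Ra\, \pi_1(X) \,\Ra\, \pi_1(W) \,\Ra\, G \,\Ra\, 1\, .
\]

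It remains to identify $\pi_1(W)$ with $\pi_1^{orb}(X/G)$, and this is the step I expect to be the main obstacle, since both groups are extensions of $G$ by $\pi_1(X)$ and one must check the extension classes agree. A $G$-equivariant map $Y\,\Ra\, EG$ exists and is unique up to $G$-homotopy, as $EG$ is terminal among free $G$-spaces; since $Y$ is $2$-connected and $EG$ is contractible, this map is highly connected, whence the induced bundle map $W=(X\times Y)/G\,\Ra\, X\times_G EG$ induces an isomorphism on $\pi_1$ (compare the two fibration sequences with fiber $X$ over the base map $Y/G\,\Ra\, BG$, which is an isomorphism on $\pi_1$ and $\pi_2$). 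Therefore $\pi_1(W)\,\cong\,\pi_1(X\times_G EG)\,=\,\pi_1^{orb}(X/G)$, so $\pi_1^{orb}(X/G)$ is quasiprojective. The remaining ingredients---existence and quasiprojectivity of the free finite quotient, and the codimension/connectivity estimate for $Y$---are standard, so it is the matching of the orbifold extension (where the connectivity of $Y$, not merely $\pi_1(Y)=1$, is essential) that carries the real content.
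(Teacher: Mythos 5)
Your proof is correct and follows the same strategy as the paper's: realize $\pi_1^{orb}(X/G)$ as the ordinary fundamental group of the free diagonal quotient of $X$ times a simply connected smooth variety with free $G$-action. The only difference is the auxiliary variety --- the paper invokes Serre's theorem to produce a simply connected smooth \emph{projective} variety with free $G$-action, whereas you build an affine approximation to $EG$ by deleting the fixed loci from a large multiple of a faithful representation; both work, and your closing worry about matching extension classes is unfounded, since simple connectivity of the auxiliary variety already suffices (in the ladder of fibration sequences the boundary map $\pi_2(Y/G)\to\pi_1(X)$ factors through $\pi_2(BG)=0$).
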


\begin{proof} Let $W$ be a smooth simply connected projective variety admitting a {\em free}
$G$--action by automorphisms. Such varieties exist by a theorem of Serre,
\cite[Example 1.11]{abckt}, which 
says that any finite group is realizable as the fundamental group of a
smooth projective variety.

Let $Y\,=\,X \times W$. Then the diagonal action of $G$ on $Y$ is free and
the (usual) fundamental group of the quotient $Y/G$ coincides with 
the orbifold fundamental group of $X/G$.
\end{proof}

The next proposition addresses Cases (1) and (4) in Remark \ref{finer}.

\begin{prop} \label{sfsqp} Let $N$ be Seifert-fibered with fiber subgroup in
the center of $\pi_1(N)$ such that the base surface
is orientable (with or without boundary). Then $\pi_1(N)$ is quasiprojective.
\end{prop}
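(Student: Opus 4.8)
The plan is to \emph{complexify the Seifert fibration}: replace the circle fibre by a $\C^\ast$, so that $N$ becomes homotopy equivalent to the total space of an orbifold $\C^\ast$--bundle over the base $2$--orbifold, and then to realize this orbifold bundle as a finite quotient of a genuine smooth quasiprojective variety so that Lemma \ref{qnt} applies.

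First I would record the group-theoretic shape of $\pi_1(N)$. Since the fibre subgroup $\langle t\rangle\cong\Z$ is central and the base surface is orientable, there is a central extension
$$1\,\Ra\,\langle t\rangle\,\Ra\,\pi_1(N)\,\Ra\,\pi_1^{orb}(B)\,\Ra\,1\, ,$$
where $B$ is the orientable base $2$--orbifold (closed in Case 1 of Remark \ref{finer}, with boundary in Case 4). This extension is determined by the Euler number of the fibration together with the local Seifert invariants $(\alpha_i,\beta_i)$ at the cone points of $B$. Because $\C^\ast\simeq S^1$, an orbifold $\C^\ast$--bundle over $B$ with the same invariants is homotopy equivalent to $N$, and hence has orbifold fundamental group $\pi_1(N)$; the whole problem is therefore to build such a bundle inside the quasiprojective world.

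Next I would put a holomorphic orbifold structure on $B$, realizing it as a complex orbifold curve $C$; when $B$ has boundary I replace each boundary circle by a puncture, so that the underlying curve is quasiprojective and noncompact. Exactly as in the proof of Proposition \ref{qfibnes}, \cite{scott1} lets me write $C=C_1/G$ with $C_1$ a smooth quasiprojective curve and $G$ a finite group acting on it, the stabilizer of a branch point over a cone point of order $\alpha_i$ being cyclic of order $\alpha_i$. The heart of the argument is then to construct a $G$--equivariant holomorphic line bundle $L$ on $C_1$ whose equivariant data reproduce the Seifert invariants of $N$: the cyclic stabilizer at each branch point should act on the corresponding fibre through a faithful character of weight $\beta_i\bmod\alpha_i$, and the $G$--degree of $L$ should account for the integral part of the Euler number. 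Writing $X$ for the complement of the zero section in $L$, a smooth quasiprojective variety, the induced $G$--action on $X$ is free (a nontrivial root of unity acts on a $\C^\ast$--fibre by $z\mapsto\zeta z$ without fixed points), and $X/G$ is precisely the complexified orbifold bundle, so that $\pi_1^{orb}(X/G)\cong\pi_1(N)$. Lemma \ref{qnt} then gives that $\pi_1(N)$ is quasiprojective.

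I expect the main obstacle to be the bundle construction of the previous step: arranging a single $G$--equivariant line bundle whose local fibre characters realize all of the $\beta_i/\alpha_i$ while its global degree realizes the integral Seifert datum, and then verifying that the resulting identification $\pi_1^{orb}(X/G)\cong\pi_1(N)$ matches the two central extensions by $\Z$ compatibly. The boundary case is comparatively soft: the base curve is then noncompact, the orbifold $\C^\ast$--bundle may be taken topologically trivial over the complement of the cone points, and the punctures impose no constraint on the degree, which is consistent with the $\Z\times F_n$ and central-extension-of-a-free-product-of-cyclics descriptions appearing in Case 4 of Remark \ref{finer}.
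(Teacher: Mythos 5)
Your proposal is correct and is essentially the paper's own argument: the paper likewise realizes the orientable base orbifold as a (quasi)projective orbifold curve, takes an orbifold algebraic line bundle whose degree matches the Euler number of the fibration and whose isotropy groups act faithfully on the fibers, and removes the zero section to get a smooth quasiprojective variety with fundamental group $\pi_1(N)$ --- your $G$--equivariant line bundle on the cover $C_1$ with faithful stabilizer characters is exactly the equivariant-chart description of that orbifold line bundle, and the freeness of the induced action on the $\C^\ast$--bundle makes the appeal to Lemma \ref{qnt} unnecessary but harmless. The only caveat is that your reduction to a global quotient $C=C_1/G$ presumes the base orbifold is good, which can fail for spherical bases (teardrops and spindles do arise as Seifert bases), whereas the paper's direct orbifold-line-bundle phrasing sidesteps this; in those exceptional cases $\pi_1(N)$ is finite or virtually cyclic and quasiprojectivity is known anyway.
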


\begin{proof}
Let $Q$ be the orientable base orbifold of $N$. Then $Q$ admits the 
structure of an algebraic curve (projective or quasiprojective according 
as $Q$ is without boundary or with boundary). Consider the
quasiprojective orbifold given by $Q$ (after we put a
quasiprojective structure on it). Let $\mathcal L$ be an orbifold algebraic
line bundle on $Q$ such that
\begin{itemize}
\item for each point $x\, \in\, Q$, the action of the isotropy group for
$x$ on the fiber ${\mathcal L}_x$ is faithful, and

\item the degree of $\mathcal L$ is the degree of the Seifert-fibration.
\end{itemize}
Let $L$ denote the underlying variety for the orbifold $\mathcal L$.
Let $\Sigma\, \subset\, L$ be the image of the zero-section of $\mathcal L$.
Then the complement $L \setminus \Sigma$ is a smooth quasiprojective 
variety with the same fundamental group as $N$.
\end{proof}

To address Case (3), we observe first that $\Z$ and $\Z\oplus \Z$ are both
quasiprojective. So only the fundamental group of a Klein bottle
remains. Let $$\phi \,:\,\C^\ast \times \C^\ast \,\Ra\, \C^\ast \times \C^\ast$$
be defined by $(z_1, z_2) \,\longmapsto\,
(\frac{1}{z_1}, -z_2)$. Let $Q\, \subset\, \text{Aut}(\C^\ast \times \C^\ast)$
be the order 2 subgroup generated by $\phi$. Then $Q$ acts freely on
$C$, and the quotient $C/Q$ has the same homotopy type as
a Klein bottle.

In order to completely answer the question ``Which $3$-manifold groups are 
quasi-projective?'', it remains to deal with virtually free groups or virtually 
surface groups. These will be addressed in Section \ref{conc} after developing some 
further tools in Section \ref{gc}.

\subsubsection{Consequences}\label{cons}

We deduce some of the results that preceded this paper from Theorem \ref{qpcomb}.

\begin{theorem}[{\cite[Theorem 1.1]{dps}}]
Let $G$ be the fundamental group of a closed orientable $3$-manifold $M$. Assume
$M$ is formal. Then the following are equivalent.
\begin{enumerate}
\item The Malcev completion of $G$ is isomorphic to the Malcev completion of
a quasi-K\"ahler group.
\item The Malcev completion of $G$
is isomorphic to the Malcev completion of the fundamental group of $S^3$, $\#_n (S^1 \times S^2)$ , or $S^1 \times \Sigma_g$, where $\Sigma_g$ denotes a closed orientable surface of genus $g$ with $g \geq 1$.
\end{enumerate}
\label{dps1}\end{theorem}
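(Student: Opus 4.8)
The plan is to treat the two implications separately, the substance lying entirely in $(1)\Rightarrow(2)$. For $(2)\Rightarrow(1)$ I would exhibit explicit smooth quasiprojective (hence quasi-K\"ahler) models, so that the Malcev completions agree on the nose. The trivial group $\pi_1(S^3)$ is realized by any simply connected smooth projective variety; the free group $F_n\cong\pi_1(\#_n(S^1\times S^2))$ is realized by the affine curve $\C$ with $n$ points removed; and $\Z\times\pi_1(\Sigma_g)\cong\pi_1(S^1\times\Sigma_g)$ is realized by $\C^\ast\times\Sigma_g$ for $\Sigma_g$ a smooth projective curve of genus $g\ge1$ (an instance of Proposition~\ref{sfsqp}). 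Since each such group is already quasi-K\"ahler, its Malcev completion is a fortiori that of a quasi-K\"ahler group.

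For $(1)\Rightarrow(2)$ I would first reduce everything to cup products. Formality of $M$ makes $G=\pi_1(M)$ one-formal, so its Malcev completion is the degree completion of the holonomy Lie algebra, and the latter is determined solely by the cup-product map $\cup\colon\wedge^2H^1(M,\Q)\to H^2(M,\Q)$. Transporting the hypothesis across the given isomorphism, the quasi-K\"ahler group $H$ with $\mathrm{Malcev}(H)\cong\mathrm{Malcev}(G)$ is itself one-formal and shares the first resonance variety of $G$. The structure theorem for resonance varieties of one-formal quasi-K\"ahler groups (Arapura's theorem, refined as in \cite{dps}) then says that every positive-dimensional component of this resonance variety is a linear subspace subject to strong isotropy constraints coming from (orbifold) pencils. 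It is this constraint that I intend to play against Poincar\'e duality.

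Next I would exploit Poincar\'e duality on the closed orientable $3$-manifold. Writing $V=H^1(M,\Q)$, duality identifies $H^2\cong V^\ast$ and encodes the entire cup-product map by the single alternating triple form $\mu\in\wedge^3V^\ast$, $\mu(a,b,c)=\int_M a\cup b\cup c$, with $a\cup b=0$ if and only if $\mu(a,b,\cdot)=0$. The crux is then a linear-algebra classification: under the isotropic-linear resonance constraint above, the only admissible forms are (i) $\mu\equiv0$, and (ii) $V=\Q t\oplus W$ with $\mu=t\wedge\omega$ for a nondegenerate alternating $\omega$ on $W$. In case (i) the holonomy Lie algebra is free on $V$, so $\mathrm{Malcev}(G)$ equals that of $F_{\dim V}$, i.e.\ of $\pi_1(\#_{\dim V}(S^1\times S^2))$ (with $\dim V=0$ giving $\pi_1(S^3)$); in case (ii) a direct computation shows $\mu$ is exactly the triple form of $S^1\times\Sigma_g$ with $2g=\dim W$, so $\mathrm{Malcev}(G)$ equals that of $\pi_1(S^1\times\Sigma_g)$. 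These are precisely the three Malcev types in $(2)$, and, as a consistency check, they are exactly the groups that the classification of quasiprojective $3$-manifold groups in Theorem~\ref{qpcomb} permits, realized through Proposition~\ref{sfsqp} and Lemma~\ref{qnt}.

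The main obstacle I anticipate is the cohomological classification in the previous paragraph: proving that Poincar\'e duality on a closed orientable $3$-manifold, together with the requirement that every positive-dimensional resonance component be a cup-isotropic linear subspace, leaves no alternating $3$-form $\mu$ besides the normal forms (i) and (ii). Controlling the potentially three-dimensional terms of $\mu$ (which isotropy must annihilate) and the mixed terms pairing the would-be $S^1$-direction $t$ against the surface block $W$ is the delicate point; a form such as a generic $\mu$ on a four-dimensional $V$ must be shown incompatible with any quasi-K\"ahler resonance structure, which is exactly what rules out the remaining formal $3$-manifolds. Everything else is either standard one-minimal-model formalism or a direct appeal to the realizations used for $(2)\Rightarrow(1)$.
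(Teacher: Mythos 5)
Your proposal is correct in outline but takes a genuinely different route from the paper. The paper disposes of this statement in two lines: it invokes its main classification (Theorem \ref{qpmain}), which says a closed $3$-manifold with quasiprojective fundamental group is Seifert-fibered or finitely covered by $\#_m S^2\times S^1$, and then quotes the fact that a Seifert-fibered space is formal if and only if it is finitely covered by $S^3$ or a trivial circle bundle (\cite[Corollary 3.38]{abckt}); the Malcev types in (2) drop out immediately. You instead reconstruct the original Dimca--Papadima--Suciu argument: reduce the Malcev completion of the one-formal group $G$ to the holonomy Lie algebra, hence to the triple cup-product form $\mu\in\bigwedge^3 H^1(M,{\mathbb Q})^\ast$ via Poincar\'e duality, and then classify the forms $\mu$ compatible with the linear/isotropic structure of resonance varieties of one-formal quasi-K\"ahler groups. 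Each approach buys something: yours works honestly at the level of Malcev completions, which is what hypothesis (1) actually asserts --- the paper's deduction tacitly upgrades (1) to ``$G$ is itself quasi-K\"ahler'' before Theorem \ref{qpmain} can be applied, so your route is closer to proving the stated equivalence --- while the paper's is far shorter because all the hard work is already packaged in Theorem \ref{qpmain}. The caveat is that the step you flag as the ``main obstacle'' (showing that the resonance constraints plus Poincar\'e duality force $\mu\equiv 0$ or $\mu=t\wedge\omega$ with $\omega$ nondegenerate) is precisely where the substance of the DPS proof lives; as written your argument is a plan rather than a proof until that classification is carried out, and formality of $M$ must be used there to know that the resonance variety of $G$ is computed by cup products alone. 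Your verification of $(2)\Rightarrow(1)$ via ${\mathbb C}$ minus $n$ points, $\C^\ast\times\Sigma_g$, and a simply connected projective variety is fine and matches the realizations the paper uses elsewhere (Proposition \ref{sfsqp}).
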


\begin{proof} This follows from Theorem \ref{qpmain} by observing that a Seifert-fibered space is formal if and only if it is finitely covered by $S^3$ or a 
trivial circle bundle \cite[Corollary 3.38]{abckt}. 
\end{proof}

\begin{theorem}[{\cite[Theorem 1.2]{fs}}]\label{fsmainthm}
Let $N$ be a $3$-manifold with empty or toroidal boundary.
If $\pi_1(N)$ is a quasiprojective group, then all the closed prime components
of $N$ are graph manifolds.
\end{theorem}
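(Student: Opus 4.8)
The plan is to read off the closed prime components directly from the classification of $\pi_1(N)$ in Theorem \ref{qpcomb}. Writing the prime decomposition as $N = N_1 \# \cdots \# N_k$, van Kampen's theorem gives $\pi_1(N) = \pi_1(N_1) * \cdots * \pi_1(N_k)$, so the fundamental group of each closed prime summand $M$ is a free factor, hence a subgroup, of the quasiprojective group $\pi_1(N)$. Since closed Seifert-fibered manifolds are graph manifolds, it suffices to prove that every such $M$ is Seifert-fibered.

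I would then run through the four types of Theorem \ref{qpcomb}. If $\pi_1(N)$ is virtually $\Z\times F_n$ with $n\geq 1$, or virtually a closed surface group, then $\pi_1(N)$ is one-ended (the number of ends being unchanged on passing to a finite-index subgroup), so by Stallings' ends theorem $N$ admits no nontrivial connected-sum decomposition and is prime. A one-ended infinite closed $3$-manifold group is a Poincar\'e duality group of dimension $3$, whereas these groups have cohomological dimension at most $2$; hence $N$ cannot be closed, so it has nonempty boundary and possesses no closed prime component at all. If instead $N$ is closed and Seifert-fibered, then it is its own prime component, and a graph manifold.

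The substantive case is $\pi_1(N)$ virtually free. Then each closed prime summand $M$ has $\pi_1(M)$ equal to a subgroup of a virtually free group, hence itself virtually free; in particular $\pi_1(M)$ is finite or has at least two ends. Invoking Geometrization --- equivalently, applying Theorem \ref{qpmain} to the closed piece --- a closed prime $3$-manifold with finite fundamental group is a spherical space form, whereas one whose fundamental group is two-ended carries $S^2\times\R$ geometry with $\pi_1(M)\cong\Z$ or $\Z\times\Z/2\Z$; the infinitely-ended alternative cannot occur, since it would exhibit $M$ as a nontrivial connected sum. In every case $M$ is Seifert-fibered, and therefore a graph manifold.

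I expect the main obstacle to be the end-theoretic bookkeeping linking the free-product structure of the prime decomposition with the four group types: verifying that the one-ended types contribute no closed prime component, and that a virtually free fundamental group of a closed prime $3$-manifold is necessarily finite or two-ended, hence spherical or of $S^2\times\R$ type. Both points rest on Stallings' theorem and Geometrization rather than on any further complex-geometric input, the quasiprojective hypothesis entering only through Theorem \ref{qpcomb}. (The toroidal-boundary assumption is that of Friedl--Suciu; the argument itself is insensitive to the boundary of $N$, since it concerns only the closed prime summands.)
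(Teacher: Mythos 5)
Your proof is correct and follows essentially the same route as the paper: the quasiprojective hypothesis enters only through the classification in Theorem \ref{qpmain}/\ref{qpcomb}, after which one checks by standard $3$-manifold topology that every closed prime summand is Seifert-fibered. The paper compresses this into a one-line citation of Theorem \ref{qpmain} together with Remark \ref{finer} Case (5) (whose Cases 2 and 5 contain, via Theorem \ref{vfree} and cohomological dimension, exactly the bookkeeping you carry out with Stallings' ends theorem and Kneser's conjecture).
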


\begin{proof} All the closed prime components of $N$ are in fact Seifert-fibered by Theorem \ref{qpmain} and Remark \ref{finer} Case (5). \end{proof}

\begin{theorem}[\cite{kot13}]\label{kot}
Let $N$ be a $3$-manifold with non-empty boundary.
If $\pi_1(N)$ is a projective group, then
$N$ is an $I$--bundle over a closed orientable surface.
\end{theorem}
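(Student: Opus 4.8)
The plan is to deduce this from Theorem \ref{qpcomb} together with the standard restrictions on K\"ahler groups, using that a projective group is simultaneously quasiprojective and K\"ahler, and that a finite-index subgroup of a projective group is again projective. First I would apply Theorem \ref{qpcomb} to $\pi_1(N)$. Since $N$ has nonempty boundary it is not closed, so option (1) is excluded and $\pi_1(N)$ is either (2) virtually free, (3) virtually of the form $\Z\times F_n$ with $n\ge 1$, or (4) virtually a surface group. The goal is to show that only case (4), with a closed orientable base, survives, and then to read off the topology of $N$ from Remark \ref{finer}.

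Next I would eliminate case (2). An infinite virtually free group has at least two ends, whereas an infinite K\"ahler group is one-ended (Gromov, via Stallings, combined with the even-$b_1$ condition); so $\pi_1(N)$ would have to be finite. But a compact $3$-manifold with a boundary component of positive genus cannot have finite $\pi_1$: its universal cover $\widetilde{N}$ is a compact simply connected $3$-manifold, so $H_1(\widetilde{N};\mathbb{Q})=0$, and half-lives-half-dies forces every component of $\partial\widetilde{N}$ to be a sphere, contradicting the fact that a covering of a positive-genus surface is never a sphere. Hence case (2) does not occur.

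The main obstacle is case (3). Here I note first that $n=1$ gives $\Z\times F_1=\Z^2=\pi_1(T^2)$, which is a surface group and so is really an instance of case (4) (with $N=T^2\times I$). For $n\ge 2$ I want to show that $\Z\times F_n$, and hence any group containing it with finite index, is not K\"ahler. Passing to a finite-index subgroup (still projective) reduces us to $\Z\times F_m$ with $m\ge 2$. When $m$ is even, $b_1=m+1$ is odd, which is impossible for a K\"ahler group. When $m$ is odd one must work harder: writing $G=\Z\times F_m=\pi_1(X)$ for $X$ smooth projective, the subspace $V\subset H^1(X,\C)$ pulled back via the classifying map $X\to K(G,1)$ from the cohomology of the free factor is a real isotropic subspace of dimension $m\ge 3$, since all cup products among its classes already vanish in $H^2(G)$. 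By the Castelnuovo--de Franchis/Catanese theorem (the projective analogue of Theorem \ref{isotropic}) this yields a fibration $f\,:\,X\to C$ onto a curve of genus $g\ge 2$ with $V\subseteq f^{*}H^1(C)$, and $f^{*}$ injective on $H^1$. Comparing dimensions of $V\subseteq f^{*}H^1(C)\subseteq H^1(X)$ gives $m\le 2g\le m+1$, so $2g=m+1$; thus $f^{*}$ is an isomorphism on $H^1$, the nondegenerate cup-product form of $C$ pulls back nondegenerately to $H^1(X)$, and it cannot contain the isotropic subspace $V$ of dimension $m=2g-1>g$. This contradiction disposes of $\Z\times F_n$ for $n\ge 2$, and I expect verifying this isotropic/pencil step carefully to be the technical heart of the argument.

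Finally, in case (4) Remark \ref{finer}, Case (5)(1), already gives that $N$ is an $I$-bundle over a surface $S$. Here $S$ must be closed, for otherwise $\pi_1(S)$, and hence $\pi_1(N)$, would be free, returning us to the excluded virtually free case. It remains to see that $S$ is orientable: a non-orientable closed surface group has $b_1$ odd whenever the non-orientable genus is even (and is the Klein-bottle group with $b_1=1$, or $\Z/2\Z$, in the low cases), while the remaining non-orientable surface groups are ruled out by the classical fact that a closed surface group is K\"ahler only when the surface is orientable. Thus $S$ is a closed orientable surface and $N$ is an $I$-bundle over it, as claimed.
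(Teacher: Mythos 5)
Your proposal is correct and follows essentially the same route as the paper: both reduce to the case analysis of Theorem \ref{qpcomb} and Remark \ref{finer} and then rule out the virtually free, virtually $\Z\times F_n$ and non-orientable-surface cases on the grounds that those groups are not projective. The only real difference is that the paper simply asserts the non-projectivity of $F_n$ and $\Z\times F_n$ for $n>1$, whereas you supply the standard K\"ahler-group arguments (one-endedness, parity of $b_1$, and the isotropic subspace theorem) that justify those assertions.
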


\begin{proof} Case 3 and Case 5(1) of Remark \ref{finer} give that $N$ is an
$I$--bundle over a closed surface $S$. If $S$ is non-orientable,
then $\pi_1(S)$ is not projective, hence $\pi_1(N)$ is not projective. 

Case 4 of Remark \ref{finer} forces a finite index subgroup $H$ of $\pi_1(N)$
to be isomorphic to $F_n \times \Z$, with $n>1$. The group $H$ is not projective
and hence $\pi_1(N)$ is not projective.

Case 5 (2) of Remark \ref{finer} along with Theorem \ref{vfree} forces a finite
index subgroup $H$ of $\pi_1(N)$ to be isomorphic to $F_n$, with $n>1$. The
group $H$ is not projective and hence $\pi_1(N)$ is not projective.
\end{proof}

\begin{rmk}\label{closed} Kotschick proves Theorem \ref{kot} in the context of K\"ahler groups. The proof we have given above works equally well in the K\"ahler case.
The only point to be noted is that we have to replace the use of Theorem \ref{qfibn} by the analogous theorem in the K\"ahler context ensuring existence
of irrational pencils as in \cite{gromov-pi1} or \cite{dg}.
\end{rmk}

\begin{rmk} \label{closed2}
 In order to recover the main Theorems of \cite{ds} or \cite{kotschick} from Theorem \ref{qpcomb}
with the modifications mentioned in Remark \ref{closed}, it remains to show that fundamental groups
of circle bundles $N$ over closed surfaces of positive genus are not K\"ahler.
If the bundle is trivial, then $b_1(N)$ is  odd. If the bundle is non-trivial, then the cup
product vanishes identically on $H^1$. Hence the maximal isotropic subspace of $H^1$ has dimension
$2g$, which would imply that $\pi_1(N)$ would admit a surjection onto the fundamental group
of a surface of genus $2g$, a contradiction.
\end{rmk}

Following \cite[p. 69]{tot}, define a good complexification of a closed
manifold $M$ without boundary to be a smooth affine algebraic variety
$U$ over $\mathbb R$ such that $M$ is diffeomorphic to
the space $U(\mathbb R)$ of real points 
and the inclusion $U(\mathbb R)\,\hookrightarrow\, U(\mathbb C)$
is a homotopy equivalence.

Using Theorem \ref{qpmain}, we have an alternative proof of the following
theorem of Totaro.

\begin{theorem}[{\cite[Section 2]{tot}}]
Let $M$ be a closed orientable $3$-manifold with a good complexification.
Then either the cup product $H^1(M,\, {\mathbb Q}) \otimes H^1(M,\,
{\mathbb Q}) \,\Ra\, H^2(M,\, {\mathbb Q})$
is $0$ or $M$ is formal. \label{tot3}
\end{theorem}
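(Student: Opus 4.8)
The plan is to deduce this directly from Theorem~\ref{qpmain}. First I would observe that a good complexification $U$ over $\R$ makes $U(\C)$ a smooth affine, hence quasiprojective, complex variety, and that the homotopy equivalence $U(\R)\hookrightarrow U(\C)$ together with the diffeomorphism $M\cong U(\R)$ identifies $\pi_1(M)$ with $\pi_1(U(\C))$. Thus $\pi_1(M)$ is quasiprojective, and Theorem~\ref{qpmain} applies: either $M$ is Seifert-fibered, or $M$ is finitely covered by $\#_m S^2\times S^1$. I would then treat the two alternatives separately and show in each that the required dichotomy holds. The main technical device throughout is the transfer homomorphism: for a finite cover $p\colon \widetilde M\to M$ the ring map $p^*\colon H^*(M,\mathbb{Q})\to H^*(\widetilde M,\mathbb{Q})$ is injective, so vanishing of the cup product $H^1\otimes H^1\to H^2$ on $\widetilde M$ forces its vanishing on $M$.

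In the case where $M$ is finitely covered by $\#_m S^2\times S^1$, I would argue that the cup product $H^1(M,\mathbb{Q})\otimes H^1(M,\mathbb{Q})\to H^2(M,\mathbb{Q})$ is identically zero. By the transfer remark it suffices to check this on $\#_m S^2\times S^1$ itself. There $H^1$ is spanned by the one-dimensional classes pulled back from the circle factors of the summands; the square of each such class vanishes because it is pulled back from $S^1$, and products of classes from distinct summands vanish because they may be represented by cochains supported in disjoint summand regions. Hence the cup product vanishes on the cover, and therefore on $M$, so the first alternative of the theorem holds.

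The remaining, harder, case is when $M$ is Seifert-fibered. Here I would invoke the formality criterion already used in the proof of Theorem~\ref{dps1}, namely that a Seifert-fibered space is formal if and only if it is finitely covered by $S^3$ or by a trivial circle bundle \cite[Corollary 3.38]{abckt}. If $M$ is so covered, then $M$ is formal and the second alternative holds. Otherwise I must show that the cup product on $H^1(M,\mathbb{Q})$ vanishes. Such an $M$ necessarily has nonzero rational Euler number $e$, since a finite cover by a trivial bundle would force $e=0$; and an orientable Seifert-fibered space with $e\ne 0$ and infinite fundamental group admits a finite cover that is an honest oriented circle bundle $\pi\colon \widetilde M\to\Sigma$ over a closed orientable surface with nonzero Euler class. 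On that cover the Gysin sequence yields $H^1(\widetilde M,\mathbb{Q})\cong\pi^*H^1(\Sigma,\mathbb{Q})$ while $\pi^*H^2(\Sigma,\mathbb{Q})=0$ in $H^2(\widetilde M,\mathbb{Q})$; since $\pi^*a\cup\pi^*b=\pi^*(a\cup b)\in\pi^*H^2(\Sigma,\mathbb{Q})=0$, the cup product vanishes on $\widetilde M$, hence on $M$ by transfer.

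The main obstacle I anticipate is precisely this Seifert-fibered analysis: one must navigate the orbifold base and the dichotomy on the rational Euler number, and verify that every non-formal orientable Seifert-fibered space admits a finite cover by a genuine circle bundle of nonzero Euler class, so that the clean Gysin computation applies. The low-complexity exceptional bases — the spherical and bad $2$-orbifolds, which need not have finite manifold covers — must be disposed of by hand; but in each of these $M$ has finite fundamental group or is $S^2\times S^1$, so that either $H^1(M,\mathbb{Q})=0$ and the cup product vanishes trivially, or $M$ is covered by $S^3$ or a trivial bundle and is formal. Either way the dichotomy survives, completing the argument.
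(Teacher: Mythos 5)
Your proposal is correct and follows essentially the same route as the paper: apply Theorem~\ref{qpmain}, then split into the cases (covered by $\#_m S^2\times S^1$; covered by $S^3$ or a trivial circle bundle, hence formal by \cite[Corollary 3.38]{abckt}; covered by a nontrivial circle bundle, where the cup product on $H^1$ vanishes). The paper's proof is just a terse three-line case list, so your transfer and Gysin-sequence details usefully fill in exactly the steps it leaves implicit.
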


\begin{proof}
By Theorem \ref{qpmain}, $M$ is
\begin{enumerate}
\item either finitely covered by $\#_n (S^1 \times S^2)$ in which case the
above cup product is $0$,

\item or $M$ is Seifert-fibered and finitely covered by either $S^3$ or a 
trivial circle bundle over a closed orientable surface; in this case $M$ 
is formal,

\item or $M$ is finitely covered by a non-trivial circle bundle over a
closed surface of positive genus; in this case, the above cup
product is zero.
\end{enumerate}
This completes the proof.
\end{proof}

\begin{rmk}\label{re-stein}
In the definition of a good complexification, if the affine variety over
$\mathbb R$ is weakened to a Stein manifold equipped with an antiholomorphic
involution, then all manifolds admit such a complexification. Indeed, given
a manifold $M$, the total space of the cotangent bundle $T^*M$ admits a
Stein manifold structure \cite{eli-stein, gompf} such that the multiplication by $-1$ on $T^*M$
is an antiholomorphic involution.
\end{rmk}

\section{Classification of three-manifolds with good complexification}\label{gc}

The definition of a good complexification was recalled prior
of Theorem \ref{tot3}. In this Section we shall describe all $3$-manifolds
admitting a good complexification.

\begin{lemma}\label{cover}
If a closed smooth manifold $M$ admits a good complexification, and $M_1$
is a finite-sheeted \'etale cover of $M$, then $M_1$ also admits a good
complexification.
\end{lemma}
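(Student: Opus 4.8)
The plan is to unwind the definition of a good complexification for $M$ and transport it to the cover $M_1$ by pulling back the defining variety along the étale correspondence. Let $U$ be the smooth affine $\Rr$-variety witnessing the good complexification of $M$, so $M \cong U(\Rr)$ and the inclusion $U(\Rr) \hookrightarrow U(\C)$ is a homotopy equivalence. The finite-sheeted étale cover $M_1 \to M$ corresponds, via the homotopy equivalence $M \simeq U(\C)$, to a finite-index subgroup of $\pi_1(U(\C))$, and hence to a finite étale cover of the complex variety $U(\C)$. The first step is to produce from this topological covering datum an honest finite étale morphism of $\Rr$-varieties $U_1 \to U$; this is possible because finite étale covers of a variety are classified by the étale fundamental group, and a topological finite cover of a smooth complex variety is algebraic (Riemann existence / the comparison between topological and étale fundamental groups for varieties of finite type over $\C$). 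Descending this cover from $\C$ to $\Rr$ requires that the subgroup be defined compatibly with the Galois action; since $M_1 \to M$ is a cover of real manifolds, the corresponding complex cover carries a natural real structure, which provides the required descent datum.

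With $U_1$ in hand as a smooth affine $\Rr$-variety finite and étale over $U$, the remaining steps are to verify the two defining properties. First I would check that $U_1(\Rr)$ is diffeomorphic to $M_1$: the real points of $U_1$ sit over those of $U$, and because the cover was built to match the topological cover $M_1 \to M = U(\Rr)$, the induced covering of real loci reproduces $M_1$. Second, I would verify that $U_1(\Rr) \hookrightarrow U_1(\C)$ is a homotopy equivalence. Here the key observation is that étale morphisms are compatible with the analytic topology, so the square relating $U_1(\Rr), U_1(\C), U(\Rr), U(\C)$ is a pullback of covering spaces; since the bottom inclusion $U(\Rr)\hookrightarrow U(\C)$ is a homotopy equivalence and both vertical maps are finite covers induced by the same $\pi_1$-subgroup, the top inclusion is a homotopy equivalence as well. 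Concretely, a homotopy equivalence of base spaces that is covered by a map of finite covers classified by corresponding subgroups lifts to a homotopy equivalence of the total spaces.

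Smoothness and affineness of $U_1$ are automatic: a variety finite and étale over a smooth affine variety is again smooth and affine (affineness because a finite morphism is affine, and the total space of an affine morphism over an affine base is affine; smoothness because étale morphisms preserve smoothness). So these regularity hypotheses require no real work.

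The main obstacle I expect is the descent from $\C$ to $\Rr$, that is, producing $U_1$ as a variety over the reals rather than merely over $\C$. One must argue that the finite-index subgroup of $\pi_1(U(\C))$ cut out by $M_1$ is stable under complex conjugation (the real structure), so that the associated étale cover descends to an $\Rr$-form. This is where the hypothesis that $M_1 \to M$ is a cover of \emph{real} manifolds, rather than an arbitrary cover of $U(\C)$, is essential: it guarantees the covering datum is conjugation-invariant, and hence Galois descent applies. Once this compatibility is established, the rest of the argument is formal manipulation of covering spaces and homotopy equivalences.
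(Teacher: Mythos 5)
Your proposal is correct and follows essentially the same route as the paper: transport the cover $M_1\to M$ to a finite cover of $U(\C)$ via the $\pi_1$-isomorphism induced by $U(\Rr)\hookrightarrow U(\C)$, observe that the antiholomorphic Galois involution acts trivially on $\pi_1(U(\C))$ (since it fixes $U(\Rr)$ pointwise) and hence lifts to an involution fixing $M_1$ pointwise, and conclude that the resulting pair is a smooth affine $\Rr$-variety giving a good complexification of $M_1$. The only cosmetic difference is that the paper cites a lemma of Friedl--Suciu for the algebraization/descent step where you invoke Riemann existence and Galois descent directly, and you spell out the final homotopy-equivalence check that the paper leaves implicit.
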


\begin{proof}
Let $U$ be a good complexification of $M$. Fix a diffeomorphism of $M$ with
$U(\mathbb R)$. Since the inclusion $U(\mathbb R)
\,\hookrightarrow\, U(\mathbb C)$ induces an isomorphism of fundamental groups,
the covering $M_1$ of $M\,=\, U(\mathbb R)$ has a unique extension to a
covering $U'_1$ of $U(\mathbb C)$. For any point $x\, \in\, U(\mathbb R)$, the
Galois (antiholomorphic) involution
$\sigma$ of $U(\mathbb C)$ for the nontrivial element of
$\text{Gal}({\mathbb C}/\mathbb R)$ induces the identity map of $\pi_1(U(
\mathbb C),\, x)$ because $\sigma\vert_{U(\mathbb R)}\,=\, \text{Id}_{U(
\mathbb R)}$ and the inclusion $U(\mathbb R) \,\hookrightarrow\, U(\mathbb C)$
induces an isomorphism of $\pi_1(U(\mathbb C),\, x)$ with
$\pi_1(U(\mathbb R),\, x)$. Therefore, $\sigma$ has a unique lift
$\sigma'$ to $U'_1$ that fixes $M_1$ pointwise.

The pair $(U'_1 \, , \sigma')$ defines a smooth affine variety
over $\mathbb R$ (see \cite[p. 157, Lemma 4.1]{fs}). Now the variety $(U'_1
\, , \sigma')$ defined over $\mathbb R$ is a good complexification of $M_1$.
\end{proof}

Let $M$ be a closed $3$-manifold admitting a good complexification. From Theorem
\ref{qpmain} it follows that $M$ is either closed Seifert-fibered or is
finitely covered by $\#_m S^2 \times S^1$. We shall therefore consider
separately the following problems:
\begin{enumerate}
\item Which Seifert-fibered manifolds admit good complexifications?
\item Does $\#_m S^2 \times S^1$, ($m>1$), admit a good complexification?
\end{enumerate}

Seifert-fibered $3$-manifolds split into three further
sub-cases according to the orbifold Euler characteristic $\chi (S)$ of the orbifold base $S$ of the
fibration:
\begin{enumerate}
\item[(1a)] $\chi(S)\,> \,0$,

\item[(1b)] $\chi(S)\,=\,0$, and

\item[(1c)] $\chi(S) \,<\, 0$.
\end{enumerate}

First we consider case (1a).
If  $\chi(S)\,> \,0$, then $M$ is covered by $S^3$ or $S^2 \times S^1$
(this follows from the Poincar\'e conjecture and classical $3$-manifold
topology \cite[Theorem 1.12]{afw}). Further, Perelman's solution of the Geometrization conjecture also implies that $M$ is a geometric
quotient of $S^3$ or $S^2 \times S^1$. It is known that geometric quotients of
$S^3$ or $S^2 \times S^1$ admit good complexification \cite[Lemma 3.1]{tot}, \cite{kul}.

\subsection{Seifert-fibered manifolds with base hyperbolic}\label{sfs2}

Now we consider case (1c).

\begin{prop}
Let $N$ be Seifert-fibered with hyperbolic base orbifold. Then $N$ does
not admit a good complexification. \label{sfshyp}
\end{prop}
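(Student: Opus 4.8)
The plan is to derive a contradiction from the assumption that such an $N$ admits a good complexification, by exploiting the structure of $\pi_1(N)$ together with the homotopy equivalence $U(\R)\hookrightarrow U(\C)$ forced by the good complexification. First I would recall that when $\chi(S)<0$, the base orbifold $S$ is hyperbolic, and the Seifert fibration gives a central (or at least infinite cyclic normal) fiber subgroup $\langle t\rangle$ in $\pi_1(N)$ with quotient $\pi_1^{orb}(S)$, a hyperbolic orbifold group. In particular $\pi_1(N)$ is infinite and contains a nonabelian surface-type (orbifold) quotient, so it is large by the earlier results; passing to a finite cover via Lemma \ref{cover}, I may assume $N$ is orientable and $\pi_1(N)$ surjects onto $F_3$.

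The key point is that a good complexification realizes $\pi_1(N)$ as a quasiprojective group (since $U(\C)$ is a smooth quasiprojective variety with $\pi_1(U(\C))\cong\pi_1(U(\R))\cong\pi_1(N)$). Thus I can feed $\pi_1(N)$ into the machinery of Theorem \ref{qfibn} and Proposition \ref{qfibnes}: after passing to the finite cover, there is a logarithmic irrational pencil $f\colon X\to C$ and an exact sequence
\[
1\,\Ra\, H\,\Ra\,\pi_1(X)\,\Ra\,\pi_1^{orb}(C)\,\Ra\,1
\]
with $H$ finitely generated. Comparing this with the Seifert exact sequence and invoking Proposition \ref{les}, the infinite cyclic normal subgroup structure pins $N$ down: the group $H$ is either infinite cyclic or the surface fiber group. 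The heart of the argument, however, is a Poincar\'e duality / Euler characteristic obstruction coming specifically from the good complexification. Because $U(\R)\hookrightarrow U(\C)$ is a homotopy equivalence and $U(\C)$ is a \emph{smooth affine} variety of complex dimension $3$, it has the homotopy type of a CW complex of real dimension at most $3$; but the real locus $M=U(\R)$ is a closed $3$-manifold, so Poincar\'e duality must hold on a space homotopy equivalent to an affine $3$-fold.

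I expect the main obstacle to be extracting a numerical contradiction from this mismatch. The cleanest route is to show that the good complexification forces the cup product $H^1(M,\mathbb{Q})\otimes H^1(M,\mathbb{Q})\to H^2(M,\mathbb{Q})$ to degenerate in a way incompatible with a Seifert manifold over a hyperbolic base. Concretely, a circle bundle (or Seifert fibration) over a hyperbolic base of positive genus carries a nondegenerate cup-product pairing on a $2g$-dimensional subspace of $H^1$ pulled back from the base, and this produces a \emph{maximal isotropic} subspace of $H^1$ of large dimension; by Theorem \ref{isotropic} this would yield an irrational pencil onto a curve of high logarithmic genus, forcing $\pi_1(N)$ to surject onto a surface group of genus $>1$, which contradicts the rigidity of the central extension structure dictated by Proposition \ref{les} (the quotient there must be virtually cyclic or a $2$-orbifold group compatible with the fiber, not a higher-genus surface group of unbounded size). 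The delicate step will be ruling out the borderline case where $C$ is itself the hyperbolic base and $f$ is essentially the Seifert projection: here I would use the affineness of $U(\C)$ to argue that $C$ must be noncompact (an affine variety admits no proper map to a projective curve), whence $H$ is forced to be finite by Proposition \ref{les}, contradicting the infinite cyclic fiber. Balancing these cases carefully, so that every possibility for $(C,H)$ collides with either the affineness constraint or the hyperbolicity of the base, is where the real care is needed.
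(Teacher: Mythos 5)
There is a genuine gap, and it is fatal to the strategy rather than to a single step: nowhere in your argument does the real structure of the good complexification actually enter, beyond identifying $\pi_1(U(\C))$ with $\pi_1(N)$. But quasiprojectivity of $\pi_1(N)$ alone can never yield the contradiction, because Proposition \ref{sfsqp} shows that these Seifert groups (over an orientable hyperbolic base) \emph{are} quasiprojective. Concretely, each of your proposed contradictions fails. The surjection of $\pi_1(N)$ onto a surface group of genus $g>1$ obtained from Theorem \ref{isotropic} is not in conflict with Proposition \ref{les}: it is exactly case (1) of that proposition ($H$ infinite cyclic, $Q$ a $2$--orbifold group), which is the situation you started from. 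The claim that the pulled-back $2g$-dimensional subspace of $H^1$ carries a nondegenerate cup product is wrong for a nontrivial circle bundle (the cup product on $H^1(N,\Q)$ vanishes identically there, as noted in Remark \ref{closed2}), and either way the isotropic-subspace machinery only reproduces the pencil over a genus-$g$ curve, consistently with everything. Finally, affineness of $U(\C)$ does \emph{not} force the target curve $C$ to be noncompact: the pencil $f\colon X_{\C}\to C$ of Theorem \ref{qfibn} is not proper, and an affine variety can surject non-properly onto a projective curve (remove an ample divisor from a fibered projective surface). In fact the paper proves that $C$ \emph{is} compact of genus $g$, the opposite of what your borderline-case analysis asserts. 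The Poincar\'e duality/Andreotti--Frankel remark likewise gives nothing, since $S^3$, $T^3$ and spherical manifolds are closed $3$-manifolds homotopy equivalent to affine $3$-folds.

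The missing idea is the Galois involution. The paper's proof runs: reduce by Lemma \ref{cover} to a principal $S^1$--bundle over $\Sigma_g$, $g>1$; produce the pencil $f\colon X_{\C}\to C$ and show $C$ is compact of genus $g$ with $\ker f_*=\langle t\rangle$; then observe that the antiholomorphic involution $\sigma$ of $X_{\C}$ acts trivially on $H^1(X_{\C},\C)$ precisely because $U(\R)\hookrightarrow U(\C)$ is a homotopy equivalence and $\sigma$ fixes $U(\R)$ pointwise. Via the correspondence between pencils and maximal real isotropic subspaces, $f$ therefore commutes with $\sigma$ and descends to an antiholomorphic involution $\sigma_1$ of $C$ whose fixed locus contains $f(N)$. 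Since $f|_N$ is homotopic to the bundle projection and hence surjective onto $C$, one gets $C^{\sigma_1}=C$, which is impossible because the identity of a positive-genus curve is not antiholomorphic. Without some version of this step your outline cannot close.
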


\begin{proof}
Seifert-fibered manifolds are finitely covered by circle bundles over surfaces.
Since a finite cover of a good complexification
is a good complexification (see Lemma \ref{cover}), it suffices to rule out
principal $S^1$--bundles $N$ over surfaces $S$ with
$\text{genus}(S)\,=\, g \,>\, 1$ and trivial orbifold structure.

So $N$ is now a principal $S^1$--bundle over a compact oriented surface $S$
with $\text{genus}(S)\,=\, g \,>\, 1$.

Let, if possible, $X$ be a good complexification of $N$. Let $X_{\mathbb C}
\,=\, X({\mathbb C})$ be the base change of $X$ to $\mathbb C$.

If the principal $S^1$--bundle $N\, \longrightarrow\, S$ is nontrivial,
then the fundamental group $\pi_1(N)$ admits a presentation
$$
\langle a_1, \cdots, a_g, b_1, \cdots, b_g, t\,\mid\, [a_i,t],  [b_i,t], \prod_{i=1}^g
[a_i,b_i]t^n \rangle\, .
$$
Then $\pi_1(N)$ admits a surjection onto the surface group $\pi_1(\Sigma_g) = 
\langle a_1, \cdots, a_g, b_1, \cdots, b_g\,\mid\,  \prod_{i=1}^g
[a_i,b_i] \rangle\, 
$. Hence, by Theorem \ref{qfibn}, there exists an irrational logarithmic
pencil
\begin{equation}\label{lefpen}
f\,: \,X_{\mathbb C} \,\Ra\, C
\end{equation}
onto a quasiprojective curve $C$ with $b_1(C)\,\geq\, 
(2g-1)\,$. If $C$ is non-compact, then $\pi_1(N)$ must admit a 
surjection onto the free group $F_{2g-1}$, which is impossible as this would induce a surjection of $\pi_1(\Sigma_g)$ (the fundamental group of
the closed orientable surface of genus $g>1$)
onto $F_{2g-1}$. Hence $C$ is 
compact.

Alternatively, if $N$ is the trivial principal $S^1$--bundle over $S$, then
$\pi_1(N)$ admits a surjection onto $\pi_1(S)$. Hence by Theorem \ref{qfibn}, there
exists a logarithmic
pencil as in \eqref{lefpen} onto a quasiprojective
curve $C$ with $b_1(C)\,\geq\, (2g-1)$. If $C$ is is non-compact, then 
$\pi_1(N)$ must admit a surjection onto $F_{2g-1}$ which is impossible. 
Hence $C$ is compact also in this case.

In either case the genus of $C$ is $g$ and $f_\ast\,:\, \pi_1(X({\mathbb C}))\,\longrightarrow \, \pi_1(C)$
has exactly $\langle t\rangle$ as its kernel.

Let
$$
\sigma\,:\, X_{\mathbb C} \,\Ra \, X_{\mathbb C}
$$
denote the antiholomorphic involution corresponding to the nontrivial
element of $\text{Gal}({\mathbb C}/\mathbb R)$. Fix an identification
of $N$ with $X^\sigma_{\mathbb C}\,=\, X({\mathbb R})$. 
The action of $\sigma$ on $H^1(X_{\mathbb C},\, {\mathbb C})$ is trivial
because the inclusion $X^\sigma_{\mathbb C}\, \hookrightarrow\, X_{\mathbb C}$ is a
homotopy equivalence.
There is a natural bijection between the irrational logarithmic pencils
as in \eqref{lefpen} and the maximal real isotropic subspaces
of $H^1(X_{\mathbb C},\, {\mathbb C})$ satisfying
certain conditions (see the first paragraph in \cite[p. 442]{bauer}).
In view of this bijective correspondence, from the fact that
the action of $\sigma$ on $H^1(X_{\mathbb C},\, {\mathbb C})$
is trivial we conclude that the map $f$ in \eqref{lefpen} commutes
with $\sigma$.
In other words, $\sigma$ descends to an antiholomorphic involution
\begin{equation}\label{s1i}
\sigma_1\, :\, C\, \longrightarrow\, C
\end{equation}
of $C$. Note that inclusion
\begin{equation}\label{fpc}
C^{\sigma_1}\, \supset\, f(X^\sigma_{\mathbb C})
\end{equation}
holds, where $C^{\sigma_1}$ is the fixed point set for $\sigma_1$.

Since $f_\ast \,:\, \pi_1(X({\mathbb C}))\,\longrightarrow \,
 \pi_1(C)$ has exactly $\langle t\rangle$ as its kernel, the same is true for 
$(f|_N)_\ast \,:\, \pi_1(N) \,\longrightarrow \, \pi_1(C)$. Since $N$ and $C$ are both Eilenberg--Maclane spaces, it follows that $f$ is homotopic to
the bundle projection map from $N$ (a circle bundle over $C$) to $C$. Hence
the restriction of $f$ to $N\,=\, X^\sigma_{\mathbb C}$ is surjective. Therefore, from \eqref{fpc} it follows
that $C^{\sigma_1}\, =\, C$. This is a contradiction because the identity
map of $C$ is not antiholomorphic. Hence $N$ cannot admit a
good complexification. 
\end{proof}

\subsection{Nil manifolds}

We now consider the second case where the orbifold base of the
Seifert fibration is flat (the genus of the orbifold is $1$).

Non-trivial circle bundles over Euclidean orbifolds are also called 
\textit{nil manifolds}.

\begin{prop}\label{sfsnil}
Let $N$ be a Nil manifold. Then $N$ does not admit a good complexification.
\end{prop}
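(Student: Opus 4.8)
The plan is to argue by contradiction, in the same spirit as the proof of Proposition \ref{sfshyp}, but without access to Bauer's pencil theorem. First I would apply Lemma \ref{cover} to reduce, after passing to a finite cover, to the case where $N$ is a principal $S^1$--bundle over the torus $T^2$ with nonzero Euler number, so that $\pi_1(N)$ is a nontrivial central extension
$$1 \,\Ra\, \Z \,\Ra\, \pi_1(N) \,\Ra\, \Z^2 \,\Ra\, 1$$
(the integral Heisenberg group). Suppose $N$ had a good complexification $U$; write $X \,=\, U(\C)$, let $\sigma\colon X\to X$ be the antiholomorphic involution attached to $\mathrm{Gal}(\C/\R)$, and identify $N$ with $X^\sigma \,=\, U(\R)$, so that $N\hookrightarrow X$ is a homotopy equivalence and $\sigma$ acts trivially on $H^\ast(X,\C)$. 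The essential difference from the hyperbolic base case is that $b_1(N)\,=\,2$: every quotient of $\pi_1(N)$ has first Betti number at most $2$, so no quotient admits a finite presentation with $n-m\,\geq\,3$, and Theorem \ref{qfibn} furnishes no pencil. (Note that $\pi_1(N)$ by itself \emph{is} quasiprojective --- it is realized by a $\C^\ast$--bundle over an elliptic curve --- so the real structure is what must be exploited.) Instead I would split into two cases according to the weight filtration on $H^1(X,\C)$.

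Since $\dim_\C H^1(X,\C)\,=\,b_1(N)\,=\,2$ and the weight-one part $W_1H^1(X)$, namely the image of $H^1(\bbar X)$, carries a pure Hodge structure of weight one and hence has even dimension $2q$, there are exactly two possibilities: $H^1(X)$ is pure of weight $1$ (so $q=1$), or it is pure of weight $2$ (so $q=0$ and $\dim Gr^W_2 H^1(X)=2$).

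In the weight-one case I would use the Albanese map $a\colon X\to A$ onto the elliptic curve $A$ (here $\dim A\,=\,q\,=\,1$); on fundamental groups it is the abelianization modulo torsion, so $a_\ast\colon\pi_1(X)\to\pi_1(A)\,=\,\Z^2$ is surjective. Because $\sigma$ acts trivially on $H^1$, by functoriality of the Albanese it descends to an antiholomorphic involution $\sigma_A$ of $A$, and $a(N)\,\subset\, A^{\sigma_A}$ since $\sigma|_N\,=\,\mathrm{Id}$. The fixed locus $A^{\sigma_A}$ of an antiholomorphic involution of a curve is a disjoint union of circles, hence at most one-dimensional; as $N$ is connected its image lies in a single circle, forcing $(a|_N)_\ast(\pi_1 N)$ to be cyclic. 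This contradicts surjectivity of $a_\ast$ onto $\Z^2$, and is the exact analogue of the impossibility of $C^{\sigma_1}\,=\,C$ in Proposition \ref{sfshyp}.

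The weight-two case is the main obstacle, and for it I would invoke the mixed Hodge structure on the rational homotopy type of a smooth variety (Morgan). When $H^1(X)$ is pure of weight $2$, any triple Massey product of three classes in $H^1(X)$ is a class in $H^2(X)$ of weight $2+2+2\,=\,6$; but $H^2$ of a smooth variety has weights in $[2,4]$, so its weight-six graded piece vanishes and every such Massey product must vanish. On the other hand the Heisenberg nilmanifold is the classical non-formal example: with invariant forms $\alpha,\beta,\gamma$ satisfying $d\gamma\,=\,\alpha\wedge\beta$, the cup product on $H^1(N)$ vanishes identically, so the triple Massey product $\langle\alpha,\beta,\beta\rangle\,=\,[\beta\wedge\gamma]$ is a \emph{nonzero} class with vanishing indeterminacy. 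Since $X\simeq N$ these two statements are contradictory. Thus neither case can occur, and $N$ admits no good complexification.
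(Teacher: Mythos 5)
Your argument is correct in outline, and in its crucial case it takes a genuinely different route from the paper. The reduction to a nontrivial principal $S^1$--bundle over the torus and the dichotomy you draw are the same as the paper's: since $b_1(X)=2$ and $W_1H^1(X)=\mathrm{im}\,H^1(\bbar X)$ is even-dimensional, either the (quasi-)Albanese is an elliptic curve (your weight-one case, the paper's Case 1) or it is $\C^*\times\C^*$ (your weight-two case, the paper's Cases 2--3). Your weight-one argument --- $a_*$ surjects onto $\Z^2$ while $a(N)$ lies in a single circle of $A^{\sigma_A}$, so $(a|_N)_*\pi_1(N)$ is cyclic --- is a clean variant of the paper's real-structure contradiction and is fine. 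Where you genuinely diverge is the weight-two case: the paper stays with the map to $\C^*\times\C^*$, splitting into a one-dimensional image, a singular fiber (killed by Kapovich's complex Morse theory, which shows the kernel is infinitely presented), and the smooth $\C^*$--bundle case (killed by showing $c_1$ of any line bundle on $\C^*\times\C^*$ vanishes, forcing $N=T^3$); you instead invoke non-$1$-formality of the Heisenberg group, which disposes of all of these at once and, notably, without using the real structure at all. This is a legitimate and arguably cleaner route, but one step is stated too loosely: the multiplicativity $W_p\cdot W_q\subset W_{p+q}$ alone gives only the vacuous bound $\langle\alpha,\beta,\beta\rangle\in W_6H^2=H^2$. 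What you actually need is that the triple Massey product is a morphism of mixed Hodge structures (Morgan's bigraded minimal model), so that its image is simultaneously \emph{pure} of weight $6$ and contained in $W_4H^2$, hence zero; equivalently, you may cite the theorem of Dimca--Papadima--Suciu (the reference \cite{dps} already used in this paper) that $W_1H^1(X)=0$ forces $\pi_1(X)$ to be $1$-formal, which is incompatible with the nonvanishing Massey product $\langle\alpha,\beta,\beta\rangle=\pm[\beta\wedge\gamma]$ on the Heisenberg nilmanifold. With that citation supplied, your proof is complete.
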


\begin{proof}
As before, in view of Lemma \ref{cover} it suffices to rule out non-trivial
principal $S^1$--bundles $N$ over the torus with trivial orbifold
structure.

So $N$ is a nontrivial principal $S^1$--bundle over a surface of genus one.

Suppose $X$ is a good complexification
of $N$. As before, let
$$
\sigma\,:\, X_{\mathbb C} \,\Ra \, X_{\mathbb C}
$$
denote the antiholomorphic involution corresponding to the nontrivial
element of $\text{Gal}({\mathbb C}/\mathbb R)$.

Let
\begin{equation}\label{alb-e}
Alb\,:\, X_{\mathbb C} \,\Ra\, C
\end{equation}
be the (quasi) Albanese
map. Then $C$ has fundamental group $\Z\oplus \Z$ and hence $C$ is either
an elliptic curve or the semiabelian variety $\C^\ast \times \C^\ast$
(see \cite{nwy}).

\noindent {\bf Case 1:~}
If $C$ is an elliptic curve, then the same arguments as in
Section \ref{sfs2} now go through as before. It leads to the conclusion
that the real dimension of the fixed set of the
involution $\sigma$ is $4$, which is a contradiction.

\noindent {\bf Case 2:~}
Assume therefore that $C$ is the semiabelian variety $\C^\ast\times\C^\ast$.
If $\dim_\C (Alb(X))\,=\,1$, then $Alb(X)$ is a curve with fundamental
group $\Z\oplus \Z$ and the same argument as in the proof of Proposition \ref{sfshyp} goes through. 

\noindent {\bf Case 3:~}
Hence suppose that $\dim_\C (Alb(X))\,=\,2$, in which case all the fibers of
$Alb$ are quasiprojective curves. 

\noindent {\bf Case 3A:~}
If some fiber of $Alb$ is a singular curve, the same (complex Morse theoretic)
arguments as in \cite[Lemmas 4, 7]{kap} (see also
\cite[Theorem 7.9]{bmp}) show that the kernel
of $Alb_\ast\,:\, \pi_1(X) \,\Ra \,\pi_1 (C)$ is infinitely presented. 

\noindent {\bf Case 3B:~}
Hence the fibers of $Alb$ must all be regular. This forces $\pi_1(F) \,=\, \Z$
and hence $F=\C^\ast$ (since $F$ is a curve). Thus $X$ is a holomorphic
$\C^\ast$--bundle over $\C^\ast \times \C^\ast$.

We note that the involution $\sigma$ commutes with $Alb$. This is because
$Alb$ is the base change to $\mathbb C$ of a morphism between varieties
defined over $\mathbb R$. Therefore, $\sigma$ descends to an antiholomorphic
involution
$$
\sigma_1\,:\, C \,\Ra\, C\, .
$$
Since the fixed point set $C^{\sigma_1}\, \subset\, C$ for the involution 
$\sigma_1$ contains $Alb(X^\sigma_{\mathbb C})$, and
$X^\sigma_{\mathbb C}$ is nonempty, we know that $C^{\sigma_1}$ is nonempty.
Consequently,
$$
C^{\sigma_1}\, =\, S^1\times S^1\, .
$$

Therefore, $X^\sigma_{\mathbb C}\,=\, N$ is a principal $S^1$--bundle
over $C^{\sigma_1}\,=\, S^1\times S^1$. We will show that the first Chern
class of this principal $S^1$--bundle on $C^{\sigma_1}$ vanishes.

The first Chern class of the above principal $S^1$--bundle
over $C^{\sigma_1}$ coincides with the first Chern class of the
principal $\C^\ast$--bundle $X_{\mathbb C}$ in \eqref{alb-e} after we
identify $H^2(\C^\ast \times \C^\ast,\,
{\mathbb Z})$ with $H^2(C^{\sigma_1},\, {\mathbb Z})$ using the
inclusion of $C^{\sigma_1}$ in $C$. Therefore, it suffices to show that
the first Chern class of an algebraic line bundle over
$\C^\ast \times \C^\ast$ vanishes.

Take any algebraic line bundle $L$ over $\C^\ast \times \C^\ast$.
The line bundle $L$ extends to an algebraic line bundle over the
projective surface ${\mathbb P}^1\times {\mathbb P}^1$. To see this,
take the closure in ${\mathbb P}^1\times {\mathbb P}^1$ of any divisor
in $\C^\ast \times \C^\ast$ representing $L$. Let $L'\,\longrightarrow\,
{\mathbb P}^1\times {\mathbb P}^1$ be an extension of $L$. 
Therefore, $c_1(L)\,=\, \iota^* c_1(L')$, where $\iota\, :\,
\C^\ast \times \C^\ast\, \hookrightarrow\, {\mathbb P}^1\times {\mathbb P}^1$
is the inclusion map. But
$$
\iota^*(H^2({\mathbb P}^1\times {\mathbb P}^1,\, {\mathbb Z}))\,=\, 0\, .
$$
Therefore, $c_1(L)\,=\, 0$.

Since $X^\sigma_{\mathbb C}\,=\, N$ is the trivial $S^1$--bundle over
$S^1\times S^1$, we conclude that $N\,=\, S^1\times S^1\times S^1$.
This contradicts the given condition that $N$ is a nil manifold.
\end{proof}

\subsection{Connected sum of copies of $S^2{\times}S^1$} 

Now we consider case (2).

\begin{prop}
Let $N$ be any closed $3$-manifold with virtually free fundamental group and suppose
that $\pi_1(N)$ is not virtually cyclic.
Then $N$ does not admit a good complexification. \label{spheres}
\end{prop}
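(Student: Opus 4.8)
The plan is to use Lemma \ref{cover} to pass to a convenient finite cover, and then to derive a contradiction from the \emph{real} structure of a putative good complexification. Since $\pi_1(N)$ is virtually free but not virtually cyclic, it contains a finite-index free subgroup of rank $\geq 2$, and hence (passing to a further finite-index subgroup) one of rank $r\geq 3$. By Lemma \ref{cover} the associated finite cover again admits a good complexification, so I may assume from the outset that $\pi_1(N)\cong F_r$ with $r\geq 3$; by Theorem \ref{qpmain} one could even take $N=\#_r\,S^2\times S^1$, but only the isomorphism $\pi_1(N)\cong F_r$ will be used. It is worth emphasizing at once that quasiprojectivity alone yields nothing: good complexifications have quasiprojective fundamental groups, yet $F_r$ is itself quasiprojective (realized by $\mathbb{P}^1$ minus $r+1$ points). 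The entire argument must therefore exploit the antiholomorphic involution.

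So suppose $X$ is a good complexification of $N$, write $X_{\mathbb C}=X(\mathbb C)$, and let $\sigma\colon X_{\mathbb C}\to X_{\mathbb C}$ be the antiholomorphic involution with $X_{\mathbb C}^\sigma=X(\mathbb R)=N$ and $N\hookrightarrow X_{\mathbb C}$ a homotopy equivalence. Applying Theorem \ref{qfibn} to the identity homomorphism of $\pi_1(X_{\mathbb C})\cong F_r$ onto $F_r$, whose target is presented by $r$ generators and no relations with $r\geq 3$, I obtain a logarithmic irrational pencil $f\colon X_{\mathbb C}\to C$ onto a quasiprojective curve $C$ with $b_1(C)\geq r\geq 3$. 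Proposition \ref{qfibnes} then furnishes an exact sequence $1\to H\to\pi_1(X_{\mathbb C})\to\pi_1^{orb}(C)\to 1$ with $H$ finitely generated.

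The first key step is to show $H$ is trivial. Here $H$ is a finitely generated normal subgroup of the free group $\pi_1(X_{\mathbb C})\cong F_r$, and by the classical fact that a nontrivial finitely generated normal subgroup of a free group has finite index, $H$ is either trivial or of finite index. The latter would make $\pi_1^{orb}(C)$ finite, contradicting $b_1(C)\geq 3$, which forces the quotient $\pi_1(C)$ (and hence $\pi_1^{orb}(C)$) to be infinite. Thus $H=1$, so $f_\ast\colon\pi_1(X_{\mathbb C})\to\pi_1^{orb}(C)$ is an isomorphism; since $F_r$ is torsion-free, $C$ has no orbifold points, $\pi_1^{orb}(C)=\pi_1(C)\cong F_r$, and in particular $C$ is non-compact.

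The second key step is the descent of $\sigma$ together with a dimension count. Because $\sigma$ restricts to the identity on $N$ and $N\hookrightarrow X_{\mathbb C}$ is a homotopy equivalence, $\sigma$ acts trivially on $H^1(X_{\mathbb C},\mathbb C)$; exactly as in the proof of Proposition \ref{sfshyp}, Bauer's correspondence between logarithmic irrational pencils and maximal real isotropic subspaces of $H^1$ then shows that $f$ commutes with $\sigma$ and descends to an antiholomorphic involution $\sigma_1\colon C\to C$ with $f\circ\sigma=\sigma_1\circ f$. Consequently $f(N)=f(X_{\mathbb C}^\sigma)\subseteq C^{\sigma_1}$. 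Now $C^{\sigma_1}$, being the fixed locus of an antiholomorphic involution on a Riemann surface, is a real $1$-manifold, so the connected image $f(N)$ lies in a single component $C_0$, homeomorphic to $S^1$ or $\mathbb R$, with $\pi_1(C_0)$ cyclic. But the composite $(f|_N)_\ast$ of the isomorphisms $\pi_1(N)\cong\pi_1(X_{\mathbb C})$ and $f_\ast\colon\pi_1(X_{\mathbb C})\cong\pi_1(C)$ is onto $\pi_1(C)\cong F_r$ while factoring through $\pi_1(C_0)$; this exhibits $F_r$ as a quotient of a cyclic group, contradicting $r\geq 3$. The main obstacle is precisely this last interplay: one must force the base curve to record all of $\pi_1(N)$ (the step $H=1$) and then observe that its real locus is only one-dimensional and so cannot support a free group of rank $\geq 2$; the descent of $\sigma$ is the remaining technical pivot and mirrors Proposition \ref{sfshyp} verbatim.
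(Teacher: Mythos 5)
Your proof is correct and follows essentially the same route as the paper's: reduce via Lemma \ref{cover} to the case $\pi_1(N)\cong F_r$ with $r\geq 3$, produce the pencil $f\colon X_{\mathbb C}\to C$ with $f_\ast$ an isomorphism onto the free group $\pi_1(C)$, descend $\sigma$ to $C$ as in Proposition \ref{sfshyp}, and derive the contradiction from the fact that $f(N)$ lies in a one-dimensional component of $C^{\sigma_1}$ whose cyclic fundamental group cannot surject onto $F_r$. The only cosmetic difference is that you establish the triviality of the kernel via Proposition \ref{qfibnes} and the finitely-generated-normal-subgroup theorem for free groups, where the paper argues directly from the Hopfian property; both are fine.
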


\begin{proof}
Any closed $3$-manifold with virtually free fundamental group is covered by a
connected sum of copies of $S^2 \times S^1$. Therefore, in view of Lemma \ref{cover},
it is enough to rule out $N\,=\,\#_m S^2 \times S^1$, where $m\,>\,1$.

The argument here follows that in Section \ref{sfs2}. We continue with the same notation. 
By passing to a finite-sheeted cover, we can assume
that $m \,\geq\, 3$. So Theorem \ref{qfibn} applies to give $$f\,:\,
X_{\mathbb C}\,\Ra\, C\, ,$$ where $C$ is a quasiprojective curve with $b_1(C)
\,\geq \,m\,\geq\, 3$. Since $\pi_1(X_{\mathbb C}) = \pi_1(N) = F_m$, this forces $\pi_1(C)$ to equal $F_m$
 and $f_\ast\,:\,\pi_1(X_{\mathbb C}) \,\Ra\, \pi_1(C)$ to be an
isomorphism. Further, $C$ must be noncompact.

As shown in the proof of Proposition \ref{sfshyp}, the morphism $f$
commutes with the antiholomorphic involution $\sigma$ of $X_{\mathbb C}$.
Therefore, $\sigma$ descends to an involution $\sigma_1$ of $C$ (as in
\eqref{s1i}). The fixed point locus $C^{\sigma_1}$ is a disjoint union of
(real) one dimensional proper (embedded) submanifolds of $C$. The image
$f(X^\sigma_{\mathbb C})\, \subset\, C^{\sigma_1}$ is a connected component of 
of $C^{\sigma_1}$, in particular, $f(X^\sigma_{\mathbb C})$ is a connected
proper (embedded) submanifold of $C$ of dimension one.

The inclusion $f(X^\sigma_{\mathbb C})\, \hookrightarrow\, C$ induces an
isomorphism of fundamental groups. On the other hand, we have $b_1(C)
\,\geq \,m\, \geq\, 3$. Therefore, there is no connected
proper (embedded) submanifold of $C$ of dimension one such that
the inclusion induces an isomorphism of fundamental groups.
In view of this contradiction, the proof of the proposition is complete.
\end{proof}

Combining Theorem \ref{qpmain} with Propositions \ref{sfshyp}, \ref{sfsnil} and
\ref{spheres} (along with the Geometrization Theorem) we obtain:

\begin{theorem} If a closed $3$-manifold $M$ admits a good complexification, then
one of the following is true: 
\begin{enumerate}
\item The manifold $M$ admits the structure of a Seifert-fibered space over a spherical
orbifold and is therefore covered by $S^3$ or $S^2 \times S^1$. Hence $M$ either admits a metric of constant positive curvature
or is covered by the (metric) product of a round $S^2$ and $\Rr$. 

\item The manifold $M$ is finitely covered by $S^1 \times S^1 \times S^1$.
Hence $M$ admits a flat metric.
\end{enumerate} \label{totaroconj} \end{theorem}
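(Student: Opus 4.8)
The plan is to feed the good complexification into Theorem \ref{qpmain} and then eliminate the undesirable geometric types one at a time using Propositions \ref{sfshyp}, \ref{sfsnil} and \ref{spheres}. First I would observe that if $U$ is a good complexification of $M$, then the homotopy equivalence $U(\R)\hookrightarrow U(\C)$ identifies $\pi_1(M)\cong\pi_1(U(\R))$ with $\pi_1(U(\C))$; since $U(\C)$ is a smooth complex quasiprojective variety, the group $\pi_1(M)$ is quasiprojective. Theorem \ref{qpmain} then yields the dichotomy that $M$ is either closed Seifert-fibered, or finitely covered by $\#_m S^2\times S^1$.

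Next I would treat the Seifert-fibered case by splitting along the orbifold Euler characteristic $\chi(S)$ of the base, as in cases (1a)--(1c) above, using the Geometrization Theorem to read off the geometry from the sign of $\chi(S)$. When $\chi(S)>0$ the base is spherical and $M$ is covered by $S^3$ or $S^2\times S^1$, which is precisely conclusion (1). When $\chi(S)<0$ the base orbifold is hyperbolic, and Proposition \ref{sfshyp} shows that no good complexification exists, contradicting the hypothesis. The borderline case $\chi(S)=0$ bifurcates: the geometry is either $\mathbb{E}^3$ or Nil. Proposition \ref{sfsnil} eliminates the Nil manifolds, leaving only the flat manifolds, which are finitely covered by $S^1\times S^1\times S^1$, that is, conclusion (2).

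Finally I would dispose of the case where $M$ is finitely covered by $\#_m S^2\times S^1$, so that $\pi_1(M)$ is virtually free. If $\pi_1(M)$ is virtually cyclic, then $M$ is covered by $S^2\times S^1$ (or is spherical), which again falls under conclusion (1). If $\pi_1(M)$ is not virtually cyclic, then Proposition \ref{spheres} rules out a good complexification, once more a contradiction. Collecting the surviving possibilities produces exactly the two alternatives (1) and (2).

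I expect no deep obstacle: every hard analytic step is already packaged inside the three Propositions, so the argument is essentially a synthesis. The only genuine care needed is in the bookkeeping of the Seifert trichotomy — making the case analysis exhaustive and mutually consistent, correctly matching each surviving value of $\chi(S)$ to its geometry via Geometrization, and handling the bifurcation at $\chi(S)=0$ into the flat case (retained) and the Nil case (discarded). I would also note that the spherical manifolds arise both as Seifert fibrations over spherical orbifolds and as examples whose (finite, hence virtually) free fundamental group falls under the second branch of Theorem \ref{qpmain}, so the two branches overlap harmlessly inside conclusion (1).
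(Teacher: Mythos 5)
Your proposal is correct and follows essentially the same route as the paper: the paper likewise feeds the quasiprojectivity of $\pi_1(M)$ (via the homotopy equivalence $U(\R)\hookrightarrow U(\C)$) into Theorem \ref{qpmain}, splits the Seifert-fibered case by the sign of $\chi(S)$ using Geometrization, and eliminates the hyperbolic-base, Nil, and non-virtually-cyclic virtually free cases by Propositions \ref{sfshyp}, \ref{sfsnil} and \ref{spheres} respectively. The bookkeeping you flag (the $\chi(S)=0$ bifurcation into flat versus Nil, and the harmless overlap of the spherical/$S^2\times\R$ cases between the two branches) matches the paper's own case analysis exactly.
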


\section{Virtually free groups and virtually surface groups}\label{conc} 

The \textit{genus} of a complex quasiprojective curve $C$ is defined to be the genus
of its smooth compactification $\overline{C}$.

\begin{lemma}\label{lem1}
Let $X$ be a smooth complex quasiprojective variety and 
$$
f\, :\, X\, \longrightarrow\, C
$$
a nonconstant algebraic map to a quasiprojective complex curve of
positive genus. Let $\iota\, :\, S\, \hookrightarrow\, X$ be a smooth
curve in $X$ such that $f\circ\iota$ is a nonconstant map. Then the dimension
of the image of the pullback homomorphism
$$
\iota^* \,:\, H^1(X,\, {\mathbb R})\, \longrightarrow\,
H^1(S,\, {\mathbb R})
$$
is at least two.
\end{lemma}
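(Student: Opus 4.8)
The plan is to exhibit two $\R$-linearly independent classes in the image of $\iota^*$, produced from a holomorphic $1$-form on $C$ whose existence is guaranteed by the hypothesis that $C$ has positive genus. Write $\bbar C$ and $\bbar S$ for the smooth compactifications of $C$ and $S$. Since $S$ is a smooth curve, the nonconstant algebraic map $f\circ\iota\colon S\to C$ extends to a nonconstant morphism $\bbar g\colon \bbar S\to \bbar C$ of smooth projective curves. As $C$ has positive genus, $\bbar C$ carries a nonzero holomorphic $1$-form $\omega$; restricted to $C$ it is closed (being holomorphic) and hence defines a class in $H^1(C,\C)$, and $f^*\omega$ is a closed holomorphic $1$-form on $X$ whose real and imaginary parts $\alpha,\beta\in H^1(X,\R)$ have $\iota^*\alpha,\iota^*\beta$ lying in the image of $\iota^*$.

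First I would note that it is enough to produce two elements of the image of $\iota^*$ that are independent over $\R$; concretely, I will show that the complex classes $\iota^*[f^*\omega]$ and $\iota^*[\,\overline{f^*\omega}\,]$ are linearly independent over $\C$ in $H^1(S,\C)$, which (since these are $\iota^*\alpha\pm\sqrt{-1}\,\iota^*\beta$) is exactly the $\R$-independence of $\iota^*\alpha,\iota^*\beta$. By functoriality $\iota^*[f^*\omega]=[(f\circ\iota)^*\omega]=[\bbar g^*\omega|_S]$, and since $\bbar g$ is nonconstant and $\omega\ne 0$, the pullback $\eta:=\bbar g^*\omega$ is a nonzero holomorphic $1$-form on the compact Riemann surface $\bbar S$. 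Thus $[\eta]$ lies in $H^{1,0}(\bbar S)$ and $[\,\bar\eta\,]$ in $H^{0,1}(\bbar S)$, both nonzero; by the Hodge decomposition $H^1(\bbar S,\C)=H^{1,0}(\bbar S)\oplus H^{0,1}(\bbar S)$ they are linearly independent in $H^1(\bbar S,\C)$. Finally, the standard injection $H^1(\bbar S,\C)\hookrightarrow H^1(S,\C)$ induced by $S\subset\bbar S$ (recalled before Theorem \ref{isotropic}) carries $[\eta]$ and $[\,\bar\eta\,]$ to $\iota^*[f^*\omega]$ and $\iota^*[\,\overline{f^*\omega}\,]$, so these remain independent and $\iota^*$ has rank at least two.

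The hard part will be the nonvanishing of $\eta$ together with the real-versus-complex bookkeeping. Nonvanishing follows because $\bbar g$, being a nonconstant holomorphic map of compact Riemann surfaces, is a branched covering, hence a local biholomorphism on a dense open set, on whose (dense) preimage $\bbar g^*\omega$ is nonzero; one should check this carefully so that $\eta$ is genuinely a nonzero element of $H^0(\bbar S,\Omega^1_{\bbar S})$ rather than merely a meromorphic form. The passage between the real pullbacks $\iota^*\alpha,\iota^*\beta$ and the complex classes $\iota^*[f^*\omega],\iota^*[\,\overline{f^*\omega}\,]$ is then routine: the complexification $H^1(S,\R)\otimes_\R\C=H^1(S,\C)$ identifies the $\R$-rank with the $\C$-rank of the (defined over $\R$) homomorphism $\iota^*$, so the $\C$-independence established above yields the desired bound $\dim_\R(\mathrm{image}\,\iota^*)\ge 2$.
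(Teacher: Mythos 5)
Your argument is correct and is essentially the paper's proof: both pull back a nonzero holomorphic $1$-form from $\bbar{C}$ along the nonconstant map of curves, use Hodge theory on a smooth compactification to turn one holomorphic class into two independent real classes, and finish with the injectivity of restriction from the compactification. The only cosmetic difference is that you run the Hodge-theoretic step on $\bbar{S}$ with the explicit pair $\eta,\bar\eta$, whereas the paper runs it on $\bbar{X}$ via the identity $\dim_{\mathbb R}\bbar{\iota}^*(H^1(\bbar{X},\R))=2\dim_{\mathbb C}\bbar{\iota}^*(H^0(\bbar{X},\Omega_{\bbar{X}}))$.
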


\begin{proof}
Let $\overline{X}$ be a smooth compactification of $X$ such that $f$ extends
to a morphism
$$
\overline{f}\, :\, \overline{X}\, \longrightarrow\, \overline{C}
$$
with the image of the extension
$$
\overline{\iota}\, :\, \overline{S}\, \longrightarrow\, \overline{X}
$$
being smooth.

We have $(\overline{f}\circ\overline{\iota})^*(H^0(\overline{C},\,
\Omega_{\overline{C}}))
\,\subset\, \overline{\iota}^*(H^0(\overline{X},\, \Omega_{\overline{X}}))$,
and $(\overline{f}\circ\overline{\iota})^*\, :\, H^0(\overline{C},\,
\Omega_{\overline{C}})\,\longrightarrow\, H^0(\overline{S},\, \Omega_{\overline{S}})$
is injective. Therefore,
$$
\dim\overline{\iota}^*(H^0(\overline{X},\, \Omega_{\overline{X}}))\, \geq\, 1\, .
$$
This implies that
\begin{equation}\label{s1}
\dim_{\mathbb R}\overline{\iota}^*(H^1(\overline{X},\, {\mathbb R}))\,=\,
2 \dim_{\mathbb C} \overline{\iota}^*(H^0(\overline{X},\,
\Omega_{\overline{X}}))\, \geq\,2\, .
\end{equation}
The restriction homomorphism $H^1(\overline{X},\, {\mathbb R})\,\longrightarrow
\, H^1(X,\, {\mathbb R})$ is injective, and $\overline{\iota})\vert_{S}
\,=\,\iota$. Therefore, from \eqref{s1} it follows that
$\dim_{\mathbb R}{\iota}^*(H^1(X,\, {\mathbb R}))\, \geq\, 2$.
\end{proof}

A slight modification of the techniques developed in the proofs of Propositions
\ref{sfshyp} , \ref{sfsnil} and \ref{spheres} yield the following
general result. (This might be regarded as a (weak) ``maps'' version of a theorem
of Catanese \cite[Theorem A']{cat} which provides the analogue for spaces.)

\begin{prop}
Let $X$ be a smooth complex quasiprojective variety and $f\,:\, X\,\Ra\, C$
an irrational logarithmic pencil over a curve $C$ with $b_1(C)\, \geq\, 3$. 
Let $F$ be any regular
fiber of $f$ and $i\,:\, F \, \hookrightarrow\, X$ the inclusion map. Suppose
that the image $i_\ast (\pi_1(F))$ is either infinite cyclic or finite. Let
$A$ be an algebraic automorphism of $X$. Then $A(F)$ is a fiber of $f$. Hence
$A$ induces an algebraic automorphism ${A_0}\,:\,
C \,\Ra\, C$.\label{fibns}
\end{prop}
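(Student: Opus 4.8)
The plan is to show that a regular fiber of $f$ is homologically negligible in $X$, that this negligibility is preserved under $A$, and hence that $A$ cannot spread a single fiber across several fibers; the descent of $A$ to the base then follows from rigidity of fibrations with connected fibers. The hypothesis on $i_\ast(\pi_1(F))$ immediately bounds a pullback: for $\alpha\in H^1(X,\R)=\mathrm{Hom}(\pi_1(X),\R)$ the class $i^\ast\alpha\in H^1(F,\R)$ is the homomorphism $\gamma\mapsto\alpha(i_\ast\gamma)$, and since $\R$ is torsion free this factors through the abelianization of $i_\ast(\pi_1(F))$, which is $0$ (finite case) or $\Z$ (infinite cyclic case). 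Thus $\dim_\R\mathrm{im}(i^\ast:H^1(X,\R)\to H^1(F,\R))\le 1$. The same bound holds for the translate $A(F)$: from $i_{A(F)}\circ(A|_F)=A\circ i$ one gets $(i_{A(F)})_\ast(\pi_1(A(F)))=A_\ast(i_\ast\pi_1(F))$, which is the image under the automorphism $A_\ast$ of $\pi_1(X)$ of a finite, respectively infinite cyclic, group, hence again finite or infinite cyclic, so $\dim_\R\mathrm{im}(i_{A(F)}^\ast)\le 1$.

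Next I would prove that $A(F)$ lies in a single fiber of $f$. Assume $f|_{A(F)}$ is nonconstant (the case $\dim X=1$ being trivial, as then $F$ is a point). Since $A(F)$ is smooth of positive dimension, a general complete intersection curve $S\subset A(F)$ is smooth with $f|_S$ nonconstant, and Lemma~\ref{lem1} gives $\dim_\R\mathrm{im}(\iota_S^\ast)\ge 2$. But $\iota_S$ factors as $S\hookrightarrow A(F)\hookrightarrow X$, so $\iota_S^\ast$ factors through $i_{A(F)}^\ast$ and $\dim_\R\mathrm{im}(\iota_S^\ast)\le\dim_\R\mathrm{im}(i_{A(F)}^\ast)\le 1$, a contradiction. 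Hence $A$ carries $A(F)$, and likewise every regular fiber, into a single fiber, so $g:=f\circ A$ is constant along the dense family of regular fibers of $f$.

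It remains to descend $A$ to $C$. Since $f$ is a fibration with connected fibers and $g$ is constant on its general fibers, the image of $(f,g):X\to C\times C$ is the graph of a rational self-map of $C$; as $C$ is a smooth curve this is a morphism $A_0:C\to C$ with $f\circ A=A_0\circ f$. The same argument applied to $A^{-1}$ produces $B_0$ with $A_0\circ B_0=B_0\circ A_0=\mathrm{id}_C$ (using that $f$ is dominant), so $A_0$ is an algebraic automorphism. Finally $f\circ A=A_0\circ f$ forces $A(f^{-1}(b))=f^{-1}(A_0(b))$ for every $b$, whence $A(F)$ is a fiber, as required.

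Steps one and two are formal once Lemma~\ref{lem1} is available; the genuine difficulty will be the descent, since $X$ and $C$ are only quasiprojective while rigidity is usually phrased in the proper setting. The safe route is to pass to the normal crossings compactification of Proposition~\ref{qfibnprop}, where $f$ extends to $f_1:\bbar X\to\bbar C$ with connected fibers so that an honest Stein factorization applies, and then to check that the resulting automorphism of $\bbar C$ preserves $C$. A secondary point I would verify is the applicability of Lemma~\ref{lem1}, which presupposes that $C$ has positive geometric genus, whereas a logarithmic pencil may have $\bbar C=\mathbb P^1$ with $b_1(C)\ge 3$ coming entirely from the punctures. In that case I would replace Lemma~\ref{lem1} by the observation that for any nonconstant morphism of curves $h:S\to C$ the pullback $h^\ast:H^1(C,\R)\to H^1(S,\R)$ is injective (transfer on the compactifications together with injectivity on residues at the punctures), which again yields rank $\ge b_1(C)\ge 3>1$ and the same contradiction.
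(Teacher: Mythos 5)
Your core argument is the same as the paper's: bound $\dim_\R\operatorname{im}(i_{A(F)}^\ast)$ by $1$ using the hypothesis on $i_\ast\pi_1(F)$ transported by the automorphism $A_\ast$ of $\pi_1(X)$, then contradict this via Lemma \ref{lem1} applied to a curve $S\subset A(F)$ on which $f$ is nonconstant. Where you go beyond the paper is worth noting. First, the paper's proof stops at ``$f\circ i$ is constant,'' leaving the descent to $A_0$ and the upgrade from ``$A(F)$ is contained in a fiber'' to ``$A(F)$ is a fiber'' implicit; your graph-of-$(f,g)$ construction, the use of $A^{-1}$ to invert $A_0$, and the final identity $A(f^{-1}(b))=f^{-1}(A_0(b))$ supply exactly the missing details (your worry about properness is handled as you suggest, via the extension $f_1:\bbar X\to\bbar C$ of Proposition \ref{qfibnprop} and the observation that $A_0(C)=A_0(f(X))=f(A(X))=C$). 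Second, you correctly flag that Lemma \ref{lem1} requires $\bbar C$ to have positive genus. The paper deals with this by ``lifting to a further Galois cover of the base'' so that $\bbar C$ has genus $\geq 2$ --- a reduction that replaces $X$ by a cover to which $A$ does not obviously lift --- whereas your substitute (injectivity of $h^\ast:H^1(C,\R)\to H^1(S,\R)$ for a nonconstant map of quasiprojective curves, via transfer on compactifications plus vanishing of residues) works in place, gives rank $\geq b_1(C)\geq 3$ directly, and avoids modifying $X$ altogether. On both points your version is the more robust one.
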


\begin{proof}
By lifting to a further Galois cover of the base $C$ if necessary, we
can assume that the smooth projective curve $\bbar{C}$ has genus greater than one.

Let $i$ denote the inclusion of $A(F)$ in $X$. Assume that $f\circ i$
is not a constant map. Applying Lemma \ref{lem1} to any smooth curve
$S\, \subset\, A(F)$ such that $f\vert_S$ is not constant, 
we conclude that the dimension of the image of the homomorphism
\begin{equation}\label{is}
i^* \,:\, H^1(X,\, {\mathbb R})\, \longrightarrow\,
H^1(A(F),\, {\mathbb R})
\end{equation}
is at least two. 

Since $A$ is a homeomorphism, from the given condition on $F$ it
follows that $i_\ast (\pi_1(A(F)))\, \subset\, \pi_1(X)$ is either
infinite cyclic or finite. Therefore, the dimension of the image of
the homomorphism
$$
i_* \,:\, H_1(S,\, {\mathbb R}) \, \longrightarrow\, H_1(X,\, {\mathbb R})
$$
is at most one. But this contradicts the observation that the image
of the homomorphism in \eqref{is} is at least two.
Therefore, $f\circ i$ is a constant map.
\end{proof}

The next proposition imposes restrictions on quasiprojective groups that 
are virtually free groups or virtually surface groups.

\begin{prop}
Let $G$ be a quasi-projective group that is virtually a non-abelian free
group or virtually the fundamental group of a closed orientable surface of
genus greater than one. Then there is a short exact sequence of the form
$$
1\,\Ra\, K\,\Ra\, G \,\Ra\, H \,\Ra\, 1\, ,
$$
where $K$ is finite and $H$ is the fundamental group of an orientable orbifold surface (possibly with boundary).
\label{freeqp} \end{prop}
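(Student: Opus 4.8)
The plan is to realize $G$ as $\pi_1(X)$ for a smooth quasiprojective variety $X$, pass to a well-chosen finite Galois cover, produce an irrational pencil there by Bauer's theorem, and then push the pencil back down to $X$ using the automorphism-rigidity supplied by Proposition \ref{fibns}. First I would reduce to a torsion-free cover. Since $G$ is virtually a non-abelian free group or virtually a surface group of genus $\geq 2$, it contains a finite-index \emph{normal} subgroup $G_1$ (take the normal core of a free, respectively surface, finite-index subgroup) that is itself free or a closed surface group; passing to a deeper subgroup if necessary, I can assume $G_1$ is free of rank $\geq 3$ or a surface group of genus $\geq 2$, so that $G_1$ admits a presentation with $n-m\,\geq\,3$. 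Realizing $G_1$ as the fundamental group of the finite \'etale cover $X_1\,\Ra\, X$ corresponding to $G_1\,\trianglelefteq\, G$, the deck group $\Delta\,=\,G/G_1$ is finite and acts on $X_1$ by algebraic automorphisms.

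Next I would apply Theorem \ref{qfibn} together with Proposition \ref{qfibnes} to $X_1$, producing a logarithmic irrational pencil $f\,:\,X_1\,\Ra\, C$ over a curve with $b_1(C)\,\geq\,3$, an orbifold structure $C_o$ on $C$, and an exact sequence $1\,\Ra\, H_F\,\Ra\, G_1\,\Ra\, \pi_1^{orb}(C_o)\,\Ra\,1$ with $H_F$ finitely generated (the image of the fiber group). The quotient $\pi_1^{orb}(C_o)$ is infinite since $b_1(C)\,\geq\,3$. I claim $H_F$ is trivial: in the free case a finitely generated normal subgroup of infinite index in a free group must be trivial; in the surface case Strebel's theorem forces $H_F$ to be free, and Bieri's theorem on normal subgroups of $PD_2$ groups, together with the absence of a normal infinite cyclic subgroup in a genus $\geq 2$ surface group, again forces $H_F\,=\,1$. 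Hence $f_\ast\,:\,G_1\,\Ra\,\pi_1^{orb}(C_o)$ is an isomorphism, and in particular the regular fiber of $f$ has finite (trivial) image.

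With the fiber image finite, Proposition \ref{fibns} applies to every $\delta\,\in\,\Delta$: being an algebraic automorphism of $X_1$, each $\delta$ permutes the fibers of $f$ and so induces an algebraic automorphism $\delta_0$ of $C$; since automorphisms preserve the multiplicities of the singular fibers, $\delta_0$ respects the orbifold structure $C_o$. Thus $\Delta$ acts on $C_o$ and $f$ is $\Delta$-equivariant, so it descends to $\bbar f\,:\,X\,=\,X_1/\Delta\,\Ra\,\mathcal D\,:=\,C_o/\Delta$, the quotient of an orientable Riemann-surface orbifold by a finite group of holomorphic automorphisms, hence itself an orientable orbifold surface (possibly with boundary, when $C$ is non-compact). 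Let $K_0\,\leq\,\Delta$ be the kernel of the action on $C$. Comparing the extensions $1\,\Ra\, G_1\,\Ra\, G\,\Ra\,\Delta\,\Ra\,1$ and $1\,\Ra\,\pi_1^{orb}(C_o)\,\Ra\,\pi_1^{orb}(\mathcal D)\,\Ra\,\Delta/K_0\,\Ra\,1$ via the isomorphism $f_\ast$ on the left and the quotient $\Delta\,\Ra\,\Delta/K_0$ on the right, the snake lemma shows $\bbar f_\ast\,:\,G\,\Ra\,\pi_1^{orb}(\mathcal D)$ is surjective with kernel $K\,\cong\,K_0$, which is finite. This is precisely the asserted sequence, with $H\,=\,\pi_1^{orb}(\mathcal D)$ an orientable orbifold surface group.

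I expect the descent step to be the main obstacle: getting $\Delta$ to act on the base $C$ at all rests on the fiber-image computation feeding Proposition \ref{fibns}, and the orbifold book-keeping — multiple fibers of $f$, fixed points of $\Delta$ on $C$, and the extra orbifold points they create on $\mathcal D$ — must be arranged carefully so that the two extensions align and the finite kernel $K$ is identified exactly with the ineffective kernel $K_0$.
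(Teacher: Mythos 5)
Your proposal is correct and follows essentially the same route as the paper: pass to a finite Galois \'etale cover $X_1$ with free or surface fundamental group, obtain a pencil $f\colon X_1\Ra C$ via Theorem \ref{qfibn}, note that the image of the fiber group is finite (indeed trivial, since finitely generated normal subgroups of infinite index in free or surface groups are trivial), invoke Proposition \ref{fibns} to make the deck group act algebraically on $C$, and take $K$ to be the kernel of that action and $H=\pi_1^{orb}(C/Q)$. Your write-up simply fills in details the paper leaves implicit (the orbifold bookkeeping from Proposition \ref{qfibnes} and the comparison of the two extensions identifying $K$ with the ineffective kernel).
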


\begin{proof}
Let $X$ be a smooth quasiprojective variety with fundamental group
$G$. Let $X_1$ be a finite Galois \'etale cover of $X$ with fundamental
group $H_1$ such that
\begin{itemize}
\item either $H_1$ is non-abelian free, or

\item $H_1$ is isomorphic to the fundamental group of a closed orientable 
surface of genus greater than one.
\end{itemize}

Let $f\,:\,X_1\,\Ra\, C$ be a logarithmic pencil given by Theorem \ref{qfibn}, and
let $i\, :\, F\,\hookrightarrow\, X_1$ be a regular fiber of $f$.
Then $i_\ast \pi_1(F)$ is finite. The quotient group $Q\,=\,G/H_1$ acts by algebraic
automorphisms on
$X_1$ and hence, by Proposition \ref{fibns}, on $C$ via algebraic automorphisms. Let $K$
be the kernel of the action of $Q$ on $C$. Let $H$ be the orbifold fundamental group of
the quotient $C/Q$. Then we have an exact sequence $$1\,\Ra\,K\,\Ra\,G\,\Ra\,H\,\Ra\,1\, .$$
Also since $Q$ acts on $C$ by holomorphic automorphisms, the quotient $C/Q$ is orientable.
\end{proof}

\begin{prop}
Let $G$ be a quasi-projective $3$-manifold group that is virtually free. Then $G$
is one of the following:
\begin{enumerate} 
\item $G\,=\,\Z$ or $\Z\oplus (\Z/2\Z)$
\item $G\,=\, \ast_i G_i$ where each $G_i$ is cyclic. 
\end{enumerate}
\label{vf3}
\end{prop}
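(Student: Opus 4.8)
The plan is to split the argument by the number of ends of $G$. Since a finite $3$-manifold group is the fundamental group of a spherical space form and is therefore Seifert fibered, I may assume throughout that $G$ is infinite. If $G$ is two-ended (equivalently, virtually cyclic), then by the classification of virtually cyclic $3$-manifold groups (see \cite{afw,hempel}) it is one of $\Z$, $\Z\oplus(\Z/2\Z)$, or $\Z/2\Z * \Z/2\Z$; the first two are exactly case (1), and the last is already a free product of cyclic groups, so the conclusion holds. It thus remains to treat the case in which $G$ is infinite, virtually free, and not virtually cyclic, i.e. $G$ is virtually a non-abelian free group.

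In this remaining case I would first record, using the prime (connected sum) decomposition of $3$-manifolds together with Remark \ref{finer} (Cases 2 and 5(2)), that $G$ splits as a free product $G = G_1 * \cdots * G_k * F_r$ in which each $G_i$ is finite, infinite cyclic, or isomorphic to $\Z\oplus(\Z/2\Z)$, and $F_r$ is free. Each factor appearing here is finite or virtually cyclic, so if this free product were trivial (a single factor) then $G$ itself would be finite or virtually cyclic, contrary to assumption. Hence $G$ is a genuine nontrivial free product $A*B$ with $A,B\neq 1$.

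Next I would invoke Proposition \ref{freeqp}, which applies precisely because $G$ is quasi-projective and virtually a non-abelian free group: it produces a short exact sequence $1\to K\to G\to H\to 1$ with $K$ finite and $H$ the orbifold fundamental group of an orientable $2$-orbifold $\Sigma$ (possibly with boundary). The crux of the proof is to show that $K$ is trivial. Here I would use the free product structure from the previous step: any finite subgroup of $A*B$ is conjugate into a factor, and for $b\notin A$ one has $A\cap bAb^{-1}=1$; since $K$ is normal we may take $K\subset A$, and then $K = bKb^{-1}\subset A\cap bAb^{-1}=1$ for any $b$ in the other factor, forcing $K=1$. Consequently $G\cong H$.

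Finally, since $G\cong H$ is virtually free, the orbifold $\Sigma$ cannot be closed: a closed orientable $2$-orbifold has orbifold fundamental group that is finite, virtually $\Z\oplus\Z$, or virtually a surface group of genus at least two, none of which is infinite and virtually free. Thus $\Sigma$ has nonempty boundary, so its orbifold fundamental group is a free product $F_{r'} * (\Z/m_1) * \cdots * (\Z/m_s)$ of a free group with the finite cyclic cone-point groups, which is a free product of cyclic groups. This places $G$ in case (2) and finishes the argument. I expect the step showing $K=1$ to be the main obstacle, since it is exactly what upgrades the merely virtual freeness of $G$ to an honest free-product-of-cyclics structure; it depends on first pinning $G$ down as a nontrivial free product via the $3$-manifold prime decomposition, and then on the malnormality of free factors.
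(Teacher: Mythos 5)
Your proposal is correct and follows essentially the same route as the paper: split off the virtually cyclic case via the classification of such $3$-manifold groups, use Remark \ref{finer} (Cases 2 and 5(2)) to write $G$ as a free product of finite and (virtually) cyclic factors, apply Proposition \ref{freeqp}, kill the finite kernel $K$, and identify $G$ with the fundamental group of an orientable $2$-orbifold with boundary. The only cosmetic difference is that you prove the absence of a finite normal subgroup directly via the Kurosh theorem and malnormality of free factors, where the paper simply cites \cite[Theorem 3.11]{scott-wall}.
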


\begin{proof} If $G$ is virtually cyclic, then by the classification of such
$3$-manifold groups (see \cite[Theorems 1.1, 1.12]{afw},
\cite[Theorem 9.13]{hempel}), $G$ is one of $\Z$ or $\Z \oplus (\Z/2\Z)$ or $(\Z/2\Z) \ast(\Z/2\Z)$.

Else $G$ is virtually a non-abelian free group. Let $N$ be a $3$-manifold with
$G\,=\,\pi_1(N)$. Then we are in Case (2) or Case 5(2) of Remark \ref{finer}.
In either case, $G= \ast_i G_i$ where each $G_i$ is either finite or $\Z$ or $\Z\oplus
(\Z/2\Z)$.
By \cite[Theorem 3.11]{scott-wall}, the group $G$ contains no finite normal
subgroup. Hence by Proposition \ref{freeqp}, the group $G$ is isomorphic to the
fundamental group of an orientable orbifold surface $S$. Since $G$ is virtually 
a non-abelian free group, the orbifold surface $S$ must have boundary. The
orbifold fundamental group $G$ of such an $S$ is of the form $G\,= \,
\ast_i G_i$, where each $G_i$ is cyclic. This is because $S$ deformation
retracts onto a wedge $(\vee_i S^1) \bigvee
(\vee_j D_j)$, where each $D_j$ is a quotient of the unit disk by a
finite cyclic group acting with a single fixed point at the origin.
\end{proof}

\begin{prop}
Let $G$ be a quasi-projective $3$-manifold group that is virtually the 
fundamental group of a closed orientable surface of genus greater than 
one. Then $G$ is isomorphic to the fundamental group of a closed 
orientable surface of genus greater than one. \label{vs3}
\end{prop}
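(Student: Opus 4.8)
The plan is to combine the manifold normal form of Remark \ref{finer} with the group-theoretic conclusion of Proposition \ref{freeqp}. First I would record the invariants $G$ inherits from being virtually a closed orientable surface group $\Sigma$ of genus $>1$: such $\Sigma$ is infinite, torsion-free, centerless, of cohomological dimension two, and non-elementary word-hyperbolic, so it has no infinite cyclic normal subgroup and is neither virtually abelian nor virtually free. All of these properties except torsion-freeness are invariants of commensurability, so $G$ is infinite, not virtually abelian, not virtually free, and---intersecting any normal $\Z$ with a finite-index copy of $\Sigma$, which admits none---contains no infinite cyclic normal subgroup. Scanning the cases of Remark \ref{finer}, the closed and boundary alternatives yielding Seifert-fibered, virtually abelian, virtually free, or infinite-cyclic-normal-subgroup fundamental groups are all excluded, leaving only Case~5(1). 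Hence $N$ is an $I$--bundle over a surface $S$ with $G\cong\pi_1(S)$; since $G$ is not virtually free, $S$ is closed, and since $G$ is infinite and not virtually abelian, $S$ is none of the sphere, projective plane, torus, or Klein bottle. Consequently $G\cong\pi_1(S)$ is torsion-free.

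Next I would apply Proposition \ref{freeqp}, whose hypotheses hold because $G$ is quasiprojective and virtually a closed orientable surface group of genus $>1$. It produces a short exact sequence $1\to K\to G\to H\to 1$ with $K$ finite and $H$ the fundamental group of an orientable orbifold surface (possibly with boundary). The torsion-freeness secured above is what makes this usable: a finite normal subgroup of a torsion-free group is trivial, so $K=1$ and $G\cong H$. I would stress that orientability is handed to us by Proposition \ref{freeqp} itself---the base orbifold there is orientable because the deck group acts by holomorphic automorphisms---so no separate argument excluding a non-orientable base surface $S$ is required.

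Finally I would remove the orbifold decorations from $H\cong G$. Torsion-freeness again forbids cone points, since a cone point of order $d$ contributes a subgroup $\Z/d\Z$, so $H$ is the fundamental group of a genuine orientable surface; and since $G$ is not virtually free, that surface is closed. Thus $G$ is the fundamental group of a closed orientable surface, of genus $>1$ because $G$ is infinite and not virtually abelian. I expect the one genuinely load-bearing step to be the upgrade from ``virtually a surface group'' to ``torsion-free'': this is precisely where the three-manifold hypothesis is used, via the $I$--bundle model of Remark \ref{finer}, since virtual torsion-freeness alone would not let us kill $K$ or delete the cone points.
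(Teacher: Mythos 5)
Your argument is correct and follows essentially the same route as the paper: Case 5(1) of Remark \ref{finer} pins down $N$ as an $I$--bundle so that $G$ is a closed surface group (hence torsion-free, hence with no finite normal subgroup), and Proposition \ref{freeqp} then forces $G$ to be an orientable (orbifold) surface group; the paper merely phrases this as a contradiction with $G$ being a closed non-orientable surface group rather than running the argument directly. Your parenthetical claim that a cone point of order $d$ always contributes a $\Z/d\Z$ subgroup fails for bad orbifolds, but since $H\cong G$ is infinite and not virtually abelian the orbifold is good, so the step stands.
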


\begin{proof}
If $G$ is not isomorphic to the fundamental group of a closed orientable 
surface of genus greater than one, then by Case 5(1) of Remark 
\ref{finer}, the group $G$ contains an index 2 subgroup $H$ that is isomorphic to 
the fundamental group of a closed orientable surface of genus greater than 
one. Also $G$ is isomorphic to the fundamental group of a closed 
non-orientable surface of genus greater than one.

Since such a $G$ contains no finite normal subgroup, by
Proposition \ref{freeqp}, the group $G$ is isomorphic to the fundamental group
of an orientable orbifold surface $S$. No orientable orbifold surface $S$
has the same fundamental group as a closed non-orientable 
surface. Therefore, the proposition follows. 
\end{proof}

Combining the observations in Section \ref{realzn} with those of this 
section, we have the following classification result for quasiprojective 
$3$-manifold groups.

\begin{theorem} \label{finalcomb}
Let $G$ be a quasiprojective group that can be realized as the fundamental
of a compact $3$-manifold $N$ with or without boundary. Then either $N$
is Seifert-fibered, or $G$ satisfies one of the following:
\begin{enumerate} 
\item[(a)] $G$ is isomorphic to $\Z$, $\Z \oplus(\Z/2\Z)$ or the fundamental
group of a Klein bottle or the fundamental group of a closed orientable
surface of genus greater than one.

\item[(b)] $G\,= \,\ast_i G_i$ where each $G_i$ is cyclic. 
\end{enumerate}
Each of the groups appearing in above alternatives (a) and (b) are quasiprojective. If
$N$ is closed Seifert-fibered, and $N$ is spherical, flat or covered by $S^2
\times \Rr$, then $\pi_1(N)$ is quasiprojective. If $N$ is an orientable closed
Seifert-fibered with hyperbolic base orbifold $B$, then $\pi_1(N)$ is
quasiprojective if and only if $B$ is an orientable orbifold.
\end{theorem}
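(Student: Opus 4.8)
The plan is to assemble the forward implication from the structural results already proved and then verify the converse (realization) statements case by case, with essentially all of the new work concentrated in the hyperbolic-base dichotomy.

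For the forward direction I would start from the four-way split of Theorem \ref{qpcomb} and route each alternative through the finer analysis of Remark \ref{finer}. If $N$ is closed Seifert-fibered we land immediately in the first alternative. If $\pi_1(N)$ is virtually free, Proposition \ref{vf3} shows it is $\Z$ or $\Z\oplus(\Z/2\Z)$ (both in (a)) or a free product of cyclic groups (alternative (b)). If $\pi_1(N)$ is virtually $\Z\times F_n$, then Case 4 of Remark \ref{finer} places $N$ among the Seifert-fibered manifolds. Finally, if $\pi_1(N)$ is virtually a surface group, the torus and Klein-bottle subcases fall into the Seifert alternative and into (a) respectively, the surface-with-boundary subcase is already virtually free, and the remaining case of a virtual closed orientable surface group of genus $>1$ is pinned down by Proposition \ref{vs3}. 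This exhausts the possibilities.

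For the converse on the groups listed in (a) and (b): $\Z$ is realized by $\C^\ast$; a closed orientable surface group of genus $>1$ is the fundamental group of a smooth projective curve; the Klein bottle group is realized by the explicit free $\Z/2\Z$-quotient of $\C^\ast\times\C^\ast$ exhibited earlier; and $\Z\oplus(\Z/2\Z)$ is realized by $\C^\ast\times W$, where $W$ is a smooth projective variety with $\pi_1(W)=\Z/2\Z$ supplied by Serre's theorem. For (b), a free product $\ast_i G_i$ of cyclic groups is the orbifold fundamental group of the affine line carrying cone points of the prescribed orders; as this group is virtually free it has a smooth quasiprojective curve as a finite orbifold cover, so Lemma \ref{qnt} applies and the group is quasiprojective. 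Among the Seifert realizations, the spherical case gives finite $\pi_1(N)$, again handled by Serre's theorem; the $S^2\times\Rr$ case produces only $\Z$, $\Z\oplus(\Z/2\Z)$, or $(\Z/2\Z)\ast(\Z/2\Z)$, all already realized; and for a flat $N$ I would realize the Bieberbach group by the diagonal-quotient construction of Lemma \ref{qnt}, letting the holonomy act on $(\C^\ast)^3$ by its integral linear representation composed with translation by a root-of-unity point, so that the free quotient has fundamental group the prescribed extension of the holonomy by $\Z^3=\pi_1((\C^\ast)^3)$. When the base is an orientable hyperbolic orbifold, orientability of both $N$ and the base forces the circle fibers to be coherently oriented, so the fiber subgroup is central and Proposition \ref{sfsqp} yields quasiprojectivity directly.

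The main obstacle is the remaining ``only if'': an orientable closed Seifert manifold $N$ over a non-orientable hyperbolic base $B$ must have non-quasiprojective fundamental group. I would argue by contradiction from a realizing variety $X$. A finite cover of $B$ of large index can be kept non-orientable with non-orientable genus at least $4$, so I may pass to a finite-index subgroup $G'=\pi_1(N')$ whose Seifert base $B'$ is non-orientable and whose orbifold group $\pi_1^{orb}(B')$ admits a finite presentation with $n-m\geq 3$; then $G'$ is quasiprojective, and Theorem \ref{qfibn} together with Proposition \ref{qfibnes} produces an exact sequence $1\to H\to\pi_1(X')\to\pi_1^{orb}(C)\to 1$ over a \emph{complex}, hence orientable, orbifold curve $C$ with $b_1(C)\geq 3$. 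Comparing with the Seifert sequence $1\to\Z\to G'\to\pi_1^{orb}(B')\to 1$ through Proposition \ref{les} forces $H$ to be the central infinite cyclic fiber subgroup (the quotient is not virtually cyclic), and uniqueness of the Seifert fibration over a hyperbolic base then identifies $\pi_1^{orb}(B')\cong\pi_1^{orb}(C)$. This is the desired contradiction, since a non-orientable hyperbolic $2$-orbifold group is never isomorphic to the orbifold group of an orientable curve. The two delicate points I expect to have to justify are that the finite cover can be chosen to keep the base non-orientable while raising $n-m$, and the group-theoretic separation of orientable from non-orientable hyperbolic $2$-orbifold groups; the latter is exactly the orbifold generalization of the closed-surface fact already invoked in Proposition \ref{vs3}.
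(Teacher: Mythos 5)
Your forward direction and your realizations of the groups in (a) and (b) follow the paper's own route almost verbatim (Theorem \ref{qpcomb}, Remark \ref{finer}, Propositions \ref{vf3} and \ref{vs3}, Lemma \ref{qnt} and Proposition \ref{sfsqp}); the only divergence there is that for the spherical, flat and $S^2\times\Rr$ cases the paper simply quotes the fact that such manifolds admit good complexifications (Totaro, Kulkarni), whereas you build the flat case by hand via a torsion-translation action of the holonomy on $(\C^\ast)^3$. That construction is plausible (the extension class in $H^2(\Phi,\Z^3)$ is torsion and is hit by the Bockstein from $H^1(\Phi,\Z^3\otimes\mathbb{Q}/\Z)$), but it is extra work the paper does not need.

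The genuine divergence is in the ``only if'' half of the last assertion. The paper passes to an \emph{orientable} surface cover $B'$ of $B$ with no orbifold points, obtains a pencil $f\colon X'\to C$ onto a closed curve, and then invokes Proposition \ref{fibns}: the deck group $Q$ permutes the fibers, hence acts on $C$ by \emph{algebraic} (holomorphic, so orientation-preserving) automorphisms, and the quotient orbifold $C/Q$ --- identified with $B$ via $\pi_1(X)/\langle t\rangle$ --- is therefore orientable. You instead pass to a \emph{non-orientable} surface cover $B'$ of $B$ and derive a contradiction by matching the Seifert sequence against the pencil sequence over the orientable curve $C$. Your comparison step is sound (any normal infinite cyclic $H\le\pi_1(N')$ lies in the fiber subgroup, and a hyperbolic $2$-orbifold group has no nontrivial finite normal subgroup, so $H=\langle t\rangle$ and $\pi_1^{orb}(C)\cong\pi_1(B')$, which is impossible for torsion-freeness/abelianization reasons). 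But the price is your first ``delicate point'': you must produce a finite cover of the non-orientable hyperbolic base orbifold $B$ that is a non-orientable \emph{surface} of large genus, i.e.\ a torsion-free finite-index subgroup of $\pi_1^{orb}(B)$ on which the orientation character is nontrivial. This is true, but it is not a routine reduction: the obvious candidates (adjoining a glide reflection to a torsion-free normal subgroup, or mapping to $\Z/2M$ with $x_i\mapsto 2M/n_i$) can fail --- for $\mathbb{RP}^2(3,3)$ the cyclic quotient of even order produces an orientable cover, and one must instead find a finite quotient injective on the torsion through which the orientation character does not factor (e.g.\ a $\Z/3$ quotient in that example, or a perfect quotient in general). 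Until that lemma is supplied, your argument has a real gap precisely where the paper's equivariance argument (Proposition \ref{fibns}) lets it avoid the issue entirely; I would either prove that covering lemma carefully or switch to the paper's orientable-cover-plus-deck-action route.
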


\begin{proof}
All the statements except for the last two are contained in Remark 
\ref{finer}, the examples constructed in Section \ref{realzn} or in 
Proposition \ref{vf3} and Proposition \ref{vs3}. The penultimate statement 
is a consequence of the fact that such manifolds admit good 
complexifications \cite{tot}.

It remains to deal with $N$ an {\em orientable}, Seifert-fibered 
with hyperbolic base orbifold $B$. That an orientable, Seifert-fibered space $N$
with orientable hyperbolic base orbifold $B$ has quasiprojective fundamental
group follows from Proposition \ref{sfsqp} and the last statement in
the first paragraph of \cite[p. 118]{hempel}. We will prove the
converse statement. 

Let $X$ be a smooth quasiprojective variety with $\pi_1(X)\,=\, \pi_1(N)$. Let
$B'$ be an orientable hyperbolic surface (without orbifold points)
that (Galois) covers $B$ and with $b_1(B')\, >\,2$.
There is a corresponding finite (Galois) cover $N'$ of $N$
which is a circle bundle over $B'$. Let $X'$ be the 
Galois \'etale cover of $X$ corresponding to the subgroup $\pi_1(N')$.
By Theorem \ref{qfibn} (or more precisely by Theorem A' of \cite{cat}
which is its generalization to the quasi-K\"ahler context), there is a
pencil $f\,:\, X' \,\Ra\, C$ with $C$ a closed curve (as $N$ is closed).
We are now in the situation of Proposition \ref{fibns}; the deck
transformation group $Q$ induces an algebraic action on $C$
forcing the quotient orientable orbifold $C/Q$ to be orientable.
\end{proof}

The following immediate Corollary of
Theorem \ref{finalcomb} answers Question 8.3 of \cite[p. 166]{fs}.

\begin{cor}\label{fs8.3}
Let $G$ be a quasiprojective group that can be realized as the fundamental
of a closed graph manifold $M$. Then $M$ is Seifert-fibered.
\end{cor}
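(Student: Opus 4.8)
The plan is to feed the graph manifold $M$ directly into Theorem \ref{qpmain} and then eliminate the remaining (virtually free) alternative by an asphericity argument. Recall from the preliminaries that a closed graph manifold is by definition prime, and that the closed graph manifolds fall into exactly three classes: the Seifert-fibered ones, the Sol manifolds, and the non-geometric graph manifolds. The goal is to show that the last two classes cannot have quasiprojective fundamental group, leaving only the Seifert-fibered case.

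First I would apply Theorem \ref{qpmain} to the closed manifold $M$ with quasiprojective $\pi_1(M)$: it yields that either $M$ is Seifert-fibered, in which case we are done, or $M$ is finitely covered by a connected sum $\#_m S^2\times S^1$ (so that $\pi_1(M)$ is virtually free). It therefore suffices to rule out the second alternative whenever $M$ is not Seifert-fibered.

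Suppose then that $M$ is not Seifert-fibered. Being a closed graph manifold, $M$ is then a Sol manifold or a non-geometric graph manifold; in either case $M$ is a closed \emph{aspherical} $3$-manifold. Now any finite cover of an aspherical manifold is aspherical, whereas each $\#_m S^2\times S^1$ has nontrivial higher homotopy and so is \emph{not} aspherical. Hence $M$ cannot be finitely covered by $\#_m S^2\times S^1$, contradicting the conclusion of Theorem \ref{qpmain}. Therefore $M$ must be Seifert-fibered.

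I expect the only point requiring any care to be the verification that a non-Seifert closed graph manifold is aspherical — equivalently, that its fundamental group cannot be virtually free — which is exactly where the primeness of $M$ and the trichotomy Seifert/Sol/non-geometric enter. As an alternative to this homotopy-theoretic formulation one could dispose of the Sol case directly via Theorem \ref{sfas}, and treat a non-geometric graph manifold by noting (as in Theorem \ref{large}) that it has infinite, torsion-free fundamental group of cohomological dimension $3$, which is incompatible with virtual freeness; but the asphericity argument above handles both cases uniformly and seems cleanest.
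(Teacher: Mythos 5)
Your argument is correct and is essentially the route the paper intends: the paper states this as an immediate consequence of its classification (Theorem \ref{finalcomb}, itself built on Theorem \ref{qpmain}), the point in both cases being that the non-Seifert alternatives force $\pi_1(M)$ to be virtually free (or otherwise of cohomological dimension at most $2$), which is impossible for a closed graph manifold that is not Seifert-fibered since such a manifold (Sol or non-geometric) is aspherical. Your asphericity formulation is a clean way to carry out exactly that elimination.
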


Friedl and Suciu conjecture the following in \cite{fs}:

\begin{conj}[{\cite[p. 166, Conjecture 8.4]{fs}}]\label{conjc}
Let $N$ be a compact $3$-manifold with empty or toroidal
boundary. If $\pi_1(N)$ is a quasiprojective group and $N$ is not prime,
then $N$ is the connected sum of spherical $3$-manifolds
and manifolds which are either diffeomorphic to
$S^1\times D^2$, $S^1\times S^1\times [0,1]$, or the $3$-torus.
\end{conj}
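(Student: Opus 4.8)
The plan is to verify Conjecture \ref{conjc} by reducing it to the structural results already established, chiefly Theorem \ref{qpmain} together with the case analysis of Remark \ref{finer}. The hypothesis is that $N$ is a compact $3$-manifold with empty or toroidal boundary, $\pi_1(N)$ is quasiprojective, and $N$ is not prime. Since $N$ is not prime, its prime decomposition $N\,=\,P_1\#\cdots\#P_r$ is nontrivial, so $\pi_1(N)$ is a nontrivial free product. First I would invoke Theorem \ref{fsmainthm}, which tells us that every \emph{closed} prime component of $N$ is a graph manifold, and then sharpen this using Remark \ref{finer} Case (5): in fact every closed prime summand must be Seifert-fibered, and moreover (by the argument feeding into Case 2 of Remark \ref{finer}) these closed summands have virtually cyclic fundamental group once we know $\pi_1(N)$ is virtually free.

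The key step is to argue that the nontriviality of the free product forces $\pi_1(N)$ to be virtually free. If $N$ has a boundary component of positive genus we are outside the stated hypothesis (the boundary is empty or toroidal), so each prime summand either is closed or has toroidal boundary. I would rule out any closed summand that is a Seifert-fibered space over a hyperbolic base and any summand with genuine toroidal boundary carrying large fundamental group, by observing that such a summand would make $\pi_1(N)$ contain a subgroup incompatible with the free-product structure forced by Theorem \ref{qpmain}. Concretely, Theorem \ref{qpmain} applied to the closed pieces (or Proposition \ref{qpmainboundary} applied to pieces with toroidal boundary) together with the fact that $N$ is non-prime lands us in Case 2 or Case 5(2) of Remark \ref{finer}. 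There the conclusion is that, after compressing boundary as far as possible, $\pi_1(N)$ is a free product $G_1*\cdots*G_k$ in which each $G_i$ is either finite (the fundamental group of a spherical $3$-manifold), or $\Z$, or $\Z\times(\Z/2\Z)$.

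From this algebraic description I would read off the topology summand by summand. A free factor that is finite and is a $3$-manifold group is the fundamental group of a spherical $3$-manifold; a free factor isomorphic to $\Z$ is realized by $S^1\times D^2$ (or $S^1\times S^2$, which is already accounted for in the connected-sum / spherical bookkeeping); a free factor isomorphic to $\Z\oplus\Z$, arising from a toroidal boundary component whose summand is $S^1\times S^1\times[0,1]$, matches the listed piece; and the $3$-torus appears precisely as the flat closed summand from Case 2 when $\pi_1$ of that summand is $\Z^3$. I would then assemble these, via Grushko's theorem (\cite[p.~25, Theorem 3.4]{hempel}) to match the free-product decomposition of $\pi_1(N)$ with the connected-sum decomposition of $N$, concluding that $N$ is a connected sum of spherical $3$-manifolds and copies of $S^1\times D^2$, $S^1\times S^1\times[0,1]$, and the $3$-torus, exactly as the conjecture asserts.

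The main obstacle I anticipate is the bookkeeping of the toroidal-boundary summands: one must be careful that a prime piece with toroidal boundary and quasiprojective fundamental group really is forced to be one of $S^1\times D^2$ or $S^1\times S^1\times[0,1]$, rather than, say, a Seifert-fibered piece over a hyperbolic orbifold with boundary. This is controlled by Theorem \ref{large}(1) and Proposition \ref{qpmainboundary}: such a hyperbolic-base Seifert piece would have large, non-(virtually-cyclic) fundamental group, which is incompatible with $\pi_1(N)$ being virtually free as forced above. Handling the non-orientable cases and the compression step of Case 5(2) cleanly, so that no extra summands sneak in, is the delicate part, but it is routine given the machinery already in place.
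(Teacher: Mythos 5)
Your reduction to ``$\pi_1(N)$ is virtually free'' is sound and matches the first half of the paper's route (largeness of nontrivial free products, Theorem \ref{qpmain} and Proposition \ref{qpmainboundary}, landing in Cases 2 and 5(2) of Remark \ref{finer}). The genuine gap comes when you ``read off the topology summand by summand.'' Remark \ref{finer} Case 2 only tells you that each free factor $G_i$ is a finite spherical $3$-manifold group, $\Z$, or $\Z\times(\Z/2\Z)$. A factor $\Z\times(\Z/2\Z)$ corresponds to the prime summand ${\mathbb R}{\mathrm P}^2\times S^1$, which is neither a spherical $3$-manifold nor any of $S^1\times D^2$, $S^1\times S^1\times[0,1]$, $T^3$; likewise a $\Z$ factor can come from $S^1\times S^2$, which you acknowledge is not on the conjecture's list and cannot be absorbed into ``spherical bookkeeping'' (it has infinite fundamental group). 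So your argument does not reach the stated conclusion. Your handling of $\Z\oplus\Z$ and $\Z^3$ factors is also internally inconsistent: these groups are not virtually free, so once you have forced $\pi_1(N)$ to be virtually free they cannot occur, and the $T^2\times I$ and $T^3$ entries in the conjecture's list are in fact vacuous for a non-prime $N$.

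The missing ingredient is precisely what the paper develops in Section \ref{conc}: Proposition \ref{freeqp} and Proposition \ref{vf3}, packaged as case (b) of Theorem \ref{finalcomb}, which show that a quasiprojective virtually free $3$-manifold group is a free product of \emph{cyclic} groups. This rests on the pencil and automorphism machinery (Lemma \ref{lem1}, Proposition \ref{fibns}) together with the Scott--Wall fact that such a group has no nontrivial finite normal subgroup; it is the actual content of the answer to the conjecture, not the ``routine'' bookkeeping you anticipate. With it in hand, every prime summand has cyclic fundamental group, which simultaneously eliminates ${\mathbb R}{\mathrm P}^2\times S^1$ and the non-cyclic spherical summands and yields the corrected list of Corollary \ref{fs8.4} (lens spaces, $S^1\times S^2$, and disk bundles over the circle). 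The paper deliberately restates the conclusion in that form because the literal list in the conjecture is at once too broad and too narrow; any correct proof has to make that adjustment explicitly rather than wave it through.
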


Following is a strong positive answer to it.

\begin{cor}\label{fs8.4}
Let $N$ be a compact $3$-manifold with empty or toroidal
boundary such that $\pi_1(N)$ is a quasiprojective group and $N$ is not prime. Then 
$N$ is the connected sum of lens spaces, $S^1 \times S^2$ and manifolds which are
diffeomorphic to disk bundles over the circle.
\end{cor}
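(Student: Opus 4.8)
The plan is to first pin down the group-theoretic structure of $\pi_1(N)$ using the results already established, and then convert this into a topological statement about the prime decomposition of $N$ via the prime decomposition theorem and Geometrization.

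First I would observe that, since $N$ is not prime, its prime decomposition is nontrivial, so $\pi_1(N)$ is a nontrivial free product of nontrivial groups; by Stallings' theorem such a group has more than one end, and in particular it is not one-ended. Now I invoke Theorem \ref{qpcomb}. The alternatives ``virtually $\Z\times F_n$'' and ``virtually a closed surface group'' both yield one-ended groups (a product of two infinite groups is one-ended, and a group commensurable with a closed surface group of genus $\geq 2$ is one-ended), so they are excluded. The alternative ``closed Seifert-fibered'' together with non-primeness forces $N\,=\,\mathbb{RP}^3\#\mathbb{RP}^3$, the unique non-prime closed Seifert-fibered $3$-manifold, whose fundamental group $(\Z/2\Z)\ast(\Z/2\Z)$ is virtually cyclic. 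Hence in every case $\pi_1(N)$ is virtually free, and I may apply Proposition \ref{vf3}: as $\pi_1(N)$ is a quasiprojective $3$-manifold group that is virtually free, it is $\Z$, or $\Z\oplus(\Z/2\Z)$, or a free product $\ast_i G_i$ with each $G_i$ cyclic. The first two groups have nontrivial centre, hence are not nontrivial free products and would force $N$ to be prime; since $N$ is not prime we land in the last case, so $\pi_1(N)\,=\,\ast_i G_i$ with every $G_i$ cyclic (finite or infinite).

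Finally I translate this back to topology. Write the nontrivial prime decomposition $N\,=\,P_1\#\cdots\#P_k$, so that $\pi_1(N)\,=\,\pi_1(P_1)\ast\cdots\ast\pi_1(P_k)$. Each $\pi_1(P_j)$ is freely indecomposable, since a prime $3$-manifold is irreducible or $S^1\times S^2$ and in either case its fundamental group is freely indecomposable. By uniqueness of the free-product decomposition into freely indecomposable factors (Grushko--Kurosh), each $\pi_1(P_j)$ is conjugate into some $G_i$ and is therefore cyclic. I then classify the prime pieces with cyclic $\pi_1$. If $\pi_1(P_j)$ is finite cyclic, then $P_j$ is closed: an incompressible toroidal boundary would force $\pi_1(P_j)$ infinite, and a compressible one would make $P_j$ a solid torus with $\pi_1=\Z$; so by Elliptization $P_j$ is a spherical manifold with cyclic fundamental group, i.e.\ a lens space. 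If $\pi_1(P_j)\,=\,\Z$ and $P_j$ is closed, then $P_j$ is an $S^2$-bundle over $S^1$; if $P_j$ has boundary, then---because a virtually free group contains no $\Z\oplus\Z$---no boundary torus is incompressible, so every boundary torus compresses and $P_j$ is a solid torus, that is, a disk bundle over the circle. Assembling these identifications shows that $N$ is a connected sum of lens spaces, copies of $S^1\times S^2$, and disk bundles over the circle.

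The step I expect to be the main obstacle is this last passage from the purely algebraic conclusion of Proposition \ref{vf3} to the explicit list of prime summands. It requires the uniqueness of free-product decompositions to force each prime piece to have cyclic fundamental group, Elliptization to recognize the finite pieces as lens spaces, and---crucially---the observation that virtual freeness forbids a $\Z\oplus\Z$ subgroup. This last point is exactly what rules out incompressible toroidal boundary, and hence the $S^1\times S^1\times[0,1]$ and $T^3$ summands permitted by the weaker Conjecture \ref{conjc}, thereby yielding the sharper conclusion asserted here.
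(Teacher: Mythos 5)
Your proposal is correct and follows essentially the same route as the paper: reduce to the case $\pi_1(N)\,=\,\ast_i G_i$ with each $G_i$ cyclic (the paper cites Case (b) of Theorem \ref{finalcomb} directly, while you rederive it from Theorem \ref{qpcomb} and Proposition \ref{vf3} via an ends argument), and then use the prime decomposition theorem together with the classification of prime $3$-manifolds with cyclic fundamental group. Your extra care in the translation step (free indecomposability of the prime factors, Kurosh uniqueness, and the absence of $\Z\oplus\Z$ subgroups ruling out incompressible tori) only makes explicit what the paper's one-line appeal to ``the complete list of such manifolds'' leaves implicit.
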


\begin{proof} We are in Case (b) of Theorem \ref{finalcomb}. Then by the
prime decomposition theorem for $3$-manifolds \cite[Ch. 3]{hempel}, the manifold
$M$ is a connected sum of manifolds with cyclic fundamental group. A complete
list of such manifolds is: lens spaces, $S^1 \times S^2$ and
manifolds which are diffeomorphic to disk bundles over the circle.
\end{proof}

From Theorem \ref{finalcomb} it follows that a closed 
{\bf non-orientable} Seifert-fibered manifold $N$ with hyperbolic base
orbifold such that its orientable double cover $N'$ is
a Seifert-fibered manifold with {\bf non-orientable} hyperbolic base
orbifold cannot have quasiprojective fundamental group, because otherwise
$\pi_1(N')$ is quasiprojective contradicting Theorem \ref{finalcomb}.
The only case that thus remains unanswered by Theorem \ref{finalcomb} is the following: 

\begin{qn} Let $N$ be a closed {\bf non-orientable} Seifert-fibered space with
hyperbolic base orbifold such that its orientable double cover
is a Seifert-fibered space with {\bf orientable} hyperbolic base orbifold. Is
$\pi_1(N)$ quasiprojective? \end{qn}

\subsection{Quasiprojective free products} In \cite{fs}, Friedl and Suciu
ask the following:

\begin{qn}[{\cite[p. 165, Question 8.1]{fs}}]\label{quest:free prod}
Suppose $A$ and $B$ are groups, such that the free product
$A*B$ is a quasiprojective group. Does it follow that $A$ and $B$
are already quasiprojective groups?
\end{qn}

\begin{lemma}\label{qpfsl}
Suppose $A$ and $B$ are  groups, such that the free product $A*B$ is a 
quasiprojective group. In addition suppose that both $A, B$ admit 
nontrivial finite index subgroups and at least one of
$A, B$ has a subgroup of index greater than $2$. Then $A*B$ is
virtually free. 
\end{lemma}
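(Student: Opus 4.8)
The plan is to manufacture a finite-index subgroup of $G=A*B$ that surjects onto a free group of rank at least three, feed it into Bauer's fibration theorem, and then use the theory of ends to control the resulting extension. First I would produce the surjection onto $F_3$. Since $A$ and $B$ each admit a proper finite-index subgroup, passing to normal cores gives proper finite-index normal subgroups $N_A\trianglelefteq A$ and $N_B\trianglelefteq B$; the hypothesis that (say) $A$ has a subgroup of index greater than two guarantees, again via the normal core, that $\bar A:=A/N_A$ has order at least $3$, while $\bar B:=B/N_B$ is a nontrivial finite group. Thus $G$ surjects onto the free product $\bar A*\bar B$ of two nontrivial finite groups, which is not $(\Z/2\Z)*(\Z/2\Z)$. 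Such a free product has negative Euler characteristic, so it is infinite, virtually free, and non-elementary; passing to a sufficiently deep finite-index torsion-free subgroup yields a free subgroup of rank $\geq 3$. Pulling this subgroup back along $G\twoheadrightarrow\bar A*\bar B$ produces a finite-index subgroup $\Gamma\leq G$ together with a surjection $\Gamma\twoheadrightarrow F_3$. Being finite-index in the quasiprojective group $G$, the group $\Gamma$ is itself quasiprojective, realized by a finite \'etale cover $X'$ of a variety with fundamental group $G$.

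Next I would apply Theorem \ref{qfibn} to $X'$: since $F_3$ has a presentation with $n-m=3\geq 3$, there is a logarithmic irrational pencil $f\colon X'\to C$ with $b_1(C)\geq 3$, and Proposition \ref{qfibnes} furnishes an exact sequence $1\to H\to\Gamma\to\pi_1^{\mathrm{orb}}(C)\to 1$ with $H$ finitely generated and $\pi_1^{\mathrm{orb}}(C)$ infinite (as $b_1(C)\geq 3$ rules out a spherical base orbifold).

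The decisive input is the behaviour of ends. Because $G=A*B$ is not $(\Z/2\Z)*(\Z/2\Z)$ (the group $\Z/2\Z$ has no subgroup of index greater than two), it splits nontrivially over the trivial subgroup and is not virtually cyclic, so by Stallings' theorem it has infinitely many ends; the same holds for its finite-index subgroup $\Gamma$. I would then invoke the standard fact that a finitely generated group possessing a finitely generated infinite normal subgroup of infinite index is one-ended. Applied to the sequence above, this forces $H$ to be finite, for otherwise $H$ infinite together with $\pi_1^{\mathrm{orb}}(C)$ infinite would make $\Gamma$ one-ended. The same one-endedness obstruction excludes $C$ being complete: in that case Theorem \ref{isotropic} gives genus $\geq 2$, so $\pi_1^{\mathrm{orb}}(C)$ is a hyperbolic $2$-orbifold group and hence one-ended, making the finite-by-(one-ended) group $\Gamma$ one-ended, a contradiction. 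Therefore $C$ is non-compact, $\pi_1^{\mathrm{orb}}(C)$ is virtually free, and $\Gamma$ is finite-by-(virtually free). Realizing a finite-index subgroup of $\Gamma$ as the fundamental group of a finite graph of finite groups shows, via Theorem \ref{vfree}, that $\Gamma$ is virtually free; since $\Gamma$ has finite index in $G$, the free product $A*B$ is virtually free.

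I expect the main obstacle to be the ends bookkeeping. One must verify that $G$ genuinely has infinitely many ends, which is precisely where the index-greater-than-two hypothesis is used to exclude the infinite dihedral group, and one must apply the extension result on ends carefully so that it simultaneously eliminates an infinite kernel $H$ and a compact base $C$. By comparison, checking that $\bar A*\bar B$ has free subgroups of rank at least three, and that finite-index subgroups of quasiprojective groups remain quasiprojective, is routine.
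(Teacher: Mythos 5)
Your argument is correct and follows the paper's proof in all essentials: produce a finite-index subgroup surjecting onto $F_3$ via finite quotients of $A$ and $B$, apply Proposition \ref{qfibnes} to obtain a finitely generated normal subgroup $H$ with infinite quotient, and use the infinitely-many-ends structure of the free product to force $H$ to be finite and the quotient virtually free --- the paper compresses this last step into a citation of Scott--Wall, Theorem 3.11, and your ends argument (together with your case analysis on whether $C$ is complete) is precisely what that citation is meant to deliver. One cosmetic point: the fact that a nontrivial free product other than $(\Z/2\Z)\ast(\Z/2\Z)$ has infinitely many ends is the elementary direction, visible directly in the Bass--Serre tree; Stallings' theorem is its converse.
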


\begin{proof}
Since $A, B$ admit nontrivial finite index subgroups, they also admit 
finite index normal subgroups. By the hypothesis, there exist finite quotients
$A_1$ and $B_1$ (of $A$ and $B$ respectively) of which at least one has order more than
$2$. So $A*B$ admits a surjection onto $A_1 \ast B_1$, and hence a finite index
subgroup $G$ of $A*B$ admits a surjection onto a non-abelian free group with greater than
$2$ generators.

Let $X$ be a smooth quasiprojective variety with fundamental group $G$. By 
Proposition \ref{qfibnes}, there exists an exact 
sequence $$1\,\Ra\, H\,\Ra\,G\,\Ra\, F_n\,\Ra\, 1$$ with $n\,\geq\, 3$
and $H$ finitely generated. Hence $H$ is trivial
\cite[Theorem 3.11]{scott-wall}. It follows that $A*B$ is virtually free.
\end{proof}

Following is a positive answer to Question \ref{quest:free prod}
under mild hypotheses.

\begin{theorem}\label{qpfs}
Suppose $A$ and $B$ are  groups, such that the free product $G\,=\, A*B$ is a 
quasiprojective group. In addition suppose that both $A, B$ admit 
nontrivial finite index subgroups and at least one of
$A, B$ has a subgroup of index greater than $2$. Then each of $A, B$ are
free products of cyclic groups. In particular both $A$ and $B$ are
quasiprojective. 
\end{theorem}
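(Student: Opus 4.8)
The plan is to lift the problem from the two factors to the whole group $G = A*B$, establish that $G$ is itself a free product of cyclic groups, and then descend to $A$ and $B$ by the Kurosh subgroup theorem. First I would invoke Lemma \ref{qpfsl}, which under exactly these hypotheses shows that $G = A*B$ is virtually free. The proof of that lemma in fact produces a finite-index subgroup isomorphic to $F_n$ with $n\geq 3$, so $G$ is virtually a \emph{non-abelian} free group; in particular $G$ is infinite, not virtually cyclic, and not isomorphic to the infinite dihedral group $\Z/2\Z * \Z/2\Z$.

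The key step is to apply Proposition \ref{freeqp} to the quasiprojective group $G$, which is legitimate precisely because $G$ is virtually a non-abelian free group. This yields a short exact sequence $1 \to K \to G \to H \to 1$ with $K$ finite and $H$ the orbifold fundamental group of an orientable orbifold surface $S$. Since $H = G/K$ is a quotient of a virtually non-abelian-free group by a finite group, $H$ is again virtually non-abelian free, hence infinitely-ended and not one-ended; this forces $S$ to have non-empty boundary, because a closed orientable $2$-orbifold has orbifold fundamental group that is finite, two-ended, or one-ended, none of which is virtually non-abelian free. An orientable orbifold surface with boundary deformation retracts onto a wedge of circles and of disks-modulo-finite-cyclic-rotation, exactly as in the proof of Proposition \ref{vf3}, so $H$ is a free product of cyclic groups.

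To identify $G$ with $H$ I would kill the finite kernel $K$. Since $G = A*B$ is a nontrivial free product and $G \not\cong \Z/2\Z * \Z/2\Z$, the theorem of Scott--Wall \cite[Theorem 3.11]{scott-wall} (already used in Proposition \ref{vf3}) shows that $G$ has no nontrivial finite normal subgroup; hence $K = 1$ and $G \cong H$ is a free product of cyclic groups. Now $A$ and $B$ are subgroups of $\ast_i C_i$ with each $C_i$ cyclic, so by the Kurosh subgroup theorem each is a free product of a free group with subgroups of conjugates of the $C_i$. Subgroups of cyclic groups are cyclic and a free group is a free product of copies of $\Z$, so both $A$ and $B$ are free products of cyclic groups. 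The final clause then follows because free products of cyclic groups are quasiprojective, which is recorded in Theorem \ref{finalcomb}.

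The main obstacle is the middle step, namely producing the free-product-of-cyclics structure on $G$ itself: everything hinges on Proposition \ref{freeqp}, whose proof in turn rests on the logarithmic irrational pencil machinery (Theorem \ref{qfibn} together with Proposition \ref{fibns}) and the induced algebraic action on the base curve. Once $G$ is known to be a free product of cyclic groups, the passage to the factors $A$ and $B$ is purely group-theoretic (Kurosh plus Scott--Wall) and routine, so no delicate analysis of $A$ and $B$ individually is needed.
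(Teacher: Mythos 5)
Your proposal is correct and follows essentially the same route as the paper's own proof: Lemma \ref{qpfsl} plus Proposition \ref{freeqp} to get the extension $1\to K\to G\to H\to 1$, Scott--Wall to kill $K$, the argument of Proposition \ref{vf3} to exhibit $G$ as a free product of cyclic groups, and then descent to the factors $A$ and $B$. Your explicit appeal to the Kurosh subgroup theorem merely spells out the step the paper states tersely as ``$A$ and $B$ are free factors of $G$,'' so there is no substantive difference.
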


\begin{proof} By Lemma \ref{qpfsl} and Proposition \ref{freeqp},
there is a short exact sequence of the form
$$1 \,\Ra\, K \,\Ra\, G \,\Ra \,H \,\Ra\, 1\, ,$$
where $K$ is finite and $H$ is the
fundamental group of an orientable orbifold surface. The subgroup $K$ is
trivial by \cite[Theorem 3.11]{scott-wall}, and $H$ is virtually free. Hence
as in the proof of Proposition \ref{vf3}, we have $G\,=\, \ast_i G_i$, where
each $G_i$ is cyclic. Therefore, since both $A$ and $B$ are free factors of
$G$, they are free product of cyclic groups. 
Hence $A$ and $B$ are fundamental groups of orientable
orbifold surface. In particular, both $A$ and $B$ are quasiprojective.
\end{proof}

\section*{Acknowledgments}

We thank the referee for detailed helpful comments and especially for Remark \ref{ref}
and a substantial part of the argument in Proposition \ref{qfibnes}.
We thank Stefan Friedl for helpful comments on an earlier draft.
This work began during a visit of the first author to RKM Vivekananda
University. A substantial part of the work was done during a visit of both authors
to Harish-Chandra Research Institute. The final touches were added while the
second author was attending a mini-workshop on K\"ahler Groups organized by
Domingo Toledo and Dieter Kotschick at the Mathematischen Forschungsinstitut,
Oberwolfach. We thank all these institutions for their hospitality.
The first author acknowledges the support of the J. C. Bose Fellowship.

\end{document}